\patchcmd{\ttlh@hang}{\parindent\z@}{\parindent\z@\leavevmode}{}{}
\patchcmd{\ttlh@hang}{\noindent}{}{}{}
\titleformat{\chapter}[frame]
{\normalsize}%
{\filright\sffamily\Large%
\enspace Chapitre \thechapter\enspace}%
{5pt}
{\rule{0pt}{30pt}\sffamily\Huge\bfseries\filcenter}
\titlespacing*{\chapter}{-1cm}{-2cm}{1cm}
\titleformat{\section}[block]{\scshape\filcenter\Large}{\thesection.}{.5em}{}
\titleformat{\subsection}[block]{\bfseries\filcenter\large}{\thesubsection.}{.5em}{}
\titleformat{\subsubsection}[runin]{\bfseries}{\thesubsubsection.}{.5em}{}[.]
  	\newtheorem{theorem}[subsubsection]{Theorem}
  	\newtheorem{corollary}[subsubsection]{Corollary}
  	\newtheorem{lemma}[subsubsection]{Lemma}
  	\newtheorem{proposition}[subsubsection]{Proposition}
\theoremstyle{definition}
	\newtheorem{definition}[subsubsection]{Definition}
	\newtheorem{remark}[subsubsection]{Remark}
\newcommand{\somme}[2]{\sum\limits_{#1}^{#2}}
\newcommand{\er}[2]{\,_{#1}{#2}}
\newcommand{\ere}[2]{\,_{#1}{#2}_{#1}}
\newcommand{\ER}{\er{E}{R}}
\newcommand{\ERE}{\ere{E}{R}}
\newcommand{\cl}[1]{\overline{#1}}
\newcommand{\Ceil}[1]{#1_{i,\lambda}}
\def\dar[#1,#2,#3]{\ar@<0.8ex>[#1] ^{#3} \ar@<-0.8ex>[#1] _{#2} }
\newcommand{\tck}[1]{#1^{\top}}
\newcommand{\KLR}{\text{KLR}}
\newcommand{\KLRb}{\mathcal{KLR}}
\newcommand{\context}[1]{m_1 \star_1 (m_2 \star_0 #1 \star_0 m_3) \star_1 m_4}
\newcommand{\Ro}{R^{\ast (1)}}
\newcommand{\So}{S^{\ast (1)}}
\newcommand{\tilda}[1]{\widetilde{\widehat{#1}}}
\newcommand{\catego}[1]{\mathbf{#1}}
\newcommand{\catklr}{\mathcal{C}^{\text{KLR}}}
\begin{document}
\thispagestyle{empty}

\begin{center}

\begin{Large}\begin{uppercase}
{Rewriting modulo isotopies in Khovanov-Lauda-Rouquier's categorification of quantum groups.}
\end{uppercase}\end{Large}

\vskip+8pt

\bigskip\hrule height 1.5pt \bigskip

\vskip+10pt

\begin{large}\begin{uppercase}
{Benjamin Dupont}
\end{uppercase}\end{large}

\vskip+40pt

\begin{small}\begin{minipage}{14cm}
\noindent\textbf{Abstract --}
We study a presentation of Khovanov - Lauda - Rouquier's candidate $2$-categorification of a quantum group using algebraic rewriting methods. We use a computational approach based on rewriting modulo the isotopy axioms of its pivotal structure to compute a family of linear bases for all the vector spaces of $2$-cells in this $2$-category. We show that these bases correspond to Khovanov and Lauda's conjectured generating sets, proving the non-degeneracy of their diagrammatic calculus. 
This implies that this $2$-category is a categorification of Lusztig's idempotent and integral quantum group $\bf{U}_{q}(\mathfrak{g})$ associated to a symmetrizable simply-laced Kac-Moody algebra $\mathfrak{g}$.

\bigskip

\smallskip\noindent\textbf{Keywords -- Rewriting modulo, Linear polygraphs, Higher-dimensional linear categories, KLR algebras, Quantum groups.}

\smallskip\noindent\textbf{M.S.C. 2010 -- 68Q42, 18D05, 17B37, 16T20.} 
\end{minipage}\end{small}
\end{center}

\vspace{0.8cm}

\begin{center}
\begin{small}\begin{minipage}{12cm}
\renewcommand{\contentsname}{}
\setcounter{tocdepth}{2}
\tableofcontents
\end{minipage}
\end{small}
\end{center}

\clearpage

\section*{Introduction}

In \cite{KL3}, Khovanov and Lauda introduced a $2$-category $\Ug$ which is a candidate to be a categorification of Lusztig's idempotent and integral version of the quantum group $\mathbf{U}_q (\mathfrak{g})$ associated to a symmetrizable Kac-Moody algebra $\mathfrak{g}$. They proved that $\Ug$ is a categorification of $\mathbf{U}_q (\mathfrak{g})$ if and only if the diagrammatic calculus they introduce in \cite{KL3} is non-degenerated, that is the spaces of $2$-cells in $\Ug$ have an explicit linear basis, proving that the relations of $\Ug$ do not vanish all the string diagrams to $0$. In \cite{ROU08}, Rouquier independently introduced a $2$-Kac-Moody algebra, which turns out to be isomorphic to $\Ug$, see \cite{BRU15}. The main objective of this paper is to prove that the sets conjectured by Khovanov and Lauda to be linear bases of the spaces of $2$-cells in $\Ug$ are linear bases using rewriting methods. The $2$-category $\Ug$ is a $\mathbb{K}$-linear~$(2,2)$-category, as recalled in Section \ref{SSS:Linear22Cat}, that is all the spaces of $2$-cells in $\Ug$ are $\mathbb{K}$-vector spaces for a ground field $\mathbb{K}$. Since $\Ug$ admits a pivotal structure, we use the context of rewriting modulo the isotopy axioms of a pivotal linear~$(2,2)$-category introduced in \cite{DUP19} to compute these linear bases.

\subsubsection*{Higher-dimensional representation theory and categorification of quantum groups} 
In representation theory, one study actions of an algebra of a vector space. Higher dimensional representation theory aims at replacing
these vector spaces by categories, and linear maps by functors. The objective in that
process is to construct a categorification of the given algebra, that is an higher
dimensional abelian, additive, or triangulated category whose corresponding Grothendieck group is isomorphic to the algebra. In this paper,
we are interested in Khovanov and Lauda's categorification of a quantum group associated to a symmetrizable Kac-Moody algebra $\mathfrak{g}$, \cite{KL1,KL2,KL3}. Given any root datum corresponding to a symmetrizable Kac-Moody
algebra $\mathfrak{g}$, they defined in \cite{KL3} a candidate $2$-category to be a categorification of Lusztig's idempotent and integral version of the quantum
group $\mathbf{U}_q (\mathfrak{g})$ associated with this root datum. 
The $2$-category $\Ug$ is defined from a presentation by generators and relations, that is it has a set of $0$-cells, a set of generating $1$-cells and a set of generating $2$-cells, and the compositions that one can make with these generating $2$-cells are subject to some relations. Khovanov and Lauda
established \cite[Theorem 1.1, Theorem 1.2]{KL3} that $\Ug$ is a categorification of $\mathbf{U}_q (\mathfrak{g})$ if the diagrammatic calculus they introduce in \cite{KL3} is non degenerated, which corresponds to the fact that each vector space of $2$-cells in $\Ug$ admits an explicit linear basis described in
\cite[Section 3.2.3]{KL3}. Khovanov and Lauda proved in \cite{KL3} the non-degeneracy of their calculus for symmetrizable Kac-Moody
algebras of type $A$, namely for $\mathfrak{sl}_n$. It remained unknown in general until the works of Webster \cite{WEB13}, who proved this non-degeneracy for any root datum of finite type and for any field $\K$ using slightly different methods than in this paper. It was also proved by Kang and Kashiwara in \cite{KK12} from a $2$-representation of Rouquier's $2$-Kac Moody algebra, proving that the diagrammatic calculus  in $\Ug$ can not be degenerated. In this paper, we give a new proof of these results using a rewriting theoretical approach. We restrict our study to the case of simply-laced symmetrizable Kac-Moody algebras, that is Kac-Moody algebras whose Dynkin graph does not admit loops nor multiple edges. Indeed, as explained in \ref{R:SimplyLaced}, the relations appearing in the presentation of $\Ug$ are more simple in this setting, and thus it simplifies the computations when studying it using rewriting methods. However, we expect that the methods provided in this paper extend to the non simply-laced setting.

In \cite{ROU08}, Rouquier defined a Kac-Moody $2$-category $\Ag$, which has less generating $2$-cells than
$\Ug$, so that rewriting in this $2$-category is more adapted. Brundan proved in \cite{BRU15} that these two $2$-categories
$\Ug$ and $\Ag$ are isomorphic. In this paper, we will thus choose to work in the $2$-category
$\Ag$ and its diagrammatic presentation given by Brundan, and translate the computations in $\Ug$ through this isomorphism.

\subsubsection*{Rewriting and linear polygraphs}
Polygraphs are algebraic objects used to generate higher-dimensional globular strict categories, introduced independently by Burroni \cite{BUR93} and Street \cite{STR86,STR87}. They have been widely used in rewriting theory \cite{MET03,LAF07,MET08,GM09,MIM10,GM12,LUC17,HAD17,GM18} to compute in various algebraic structures and describe the properties of the computations using a homotopical approach. The properties of presentations of $2$-categories by $3$-polygraphs have been studied in a non-linear setting in \cite{GM09}. These methods have been extended to the structure of $\mathbb{K}$-linear~$(2,2)$-categories, that is categories enriched in $\mathbb{K}$-linear categories for a given field $\mathbb{K}$ in \cite{AL16}, following the notion of linear polygraphs introduced in \cite{GHM17} to rewrite in associative algebras. We study presentations of linear~$(2,2)$-categories by rewriting systems called linear~$(3,2)$-polygraphs. There are two fundamental rewriting properties that we study in order to compute linear bases: termination, establishing that an element can not be reduced infinitely many times, and confluence, stating that two paths of reductions starting from the same element must reach the same result. It was proven in \cite{AL16} that, given a presentation of a linear~$(2,2)$-category $\C$ by a convergent, that is terminating and confluent, linear~$(3,2)$-polygraph $P$, one can obtain a hom-basis of $\C$, that is a family of sets $B_{p,q}$ indexed by pairs of $1$-cells $p$ and $q$ in $\C$ such that $B_{p,q}$ is a linear basis of the vector space $\C_2(p,q)$ of $2$-cells with $1$-source $p$ and $1$-target $q$. Such a hom-basis is constructed by considering monomials in $\mathcal{C}$ in normal form with respect to $P$. This was then extended in a non-terminating context, as Alleaume pointed out in \cite{AL16} that many presentations of the linear~$(2,2)$-categories arising in representation theory cannot be oriented in a terminating way. However, the presentations are in general quasi-terminating, that is each non-terminating rewriting sequence is derived from a rewriting cycle. Alleaume extended the basis result in \cite{ALPhD} to quasi-terminating linear~$(3,2)$-polygraphs by proving that a hom-basis is given by monomials in quasi-normal form, that is monomials on which we can only apply some rules that give rise to a cycle, and no other relation. 

In this paper, we will study two linear~$(2,2)$-categories with different structures using a rewriting theoretical approach. The first one is a linear~$(2,2)$-category $\catklr$ defined in such a way that the Khovanov-Lauda-Rouquier (KLR) algebras \cite{KL1,ROU08} can be recovered as some spaces of $2$-cells of $\catklr$, as explained in Section \ref{SSS:CategoricalStructure}. This linear~$(2,2)$-category does not admit a pivotal structure, and we prove in Section \ref{SS:LinearPolKLR} that it can be presented by a convergent linear~$(3,2)$-polygraph. The second linear~$(2,2)$-category is the Khovanov-Lauda-Rouquier's $2$-categorification of a quantum group, which admits a pivotal structure, 
that is each $1$-cell is equipped with a biadjoint, yielding unit and counit $2$-cells, and the remaining $2$-cells satisfy some cyclicity relations in the sense of \cite{CKS00}. In terms of string diagrams, the unit and counit $2$-cells will be depicted by caps and cups diagrams, and the axioms of pivotality implies that all the string diagrams will be depicted up to isotopy. The isotopy relations make the confluence difficult to prove, because of a great number of overlappings between a defining relation and an isotopy relation. As a consequence, the methods provided in \cite{AL16} are difficult to apply in this situation. This issue is solved by using rewriting modulo these relations, as explained in \cite{DUP19}, where these rules are not considered as rewriting rules anymore, but as axioms that we freely use in rewriting paths.

\subsubsection*{Khovanov-Lauda-Rouquier algebras}
In the construction of a categorification of a quantum group, the family of KLR algebras, or quiver Hecke algebras, emerged.
These algebras were discovered independently by Rouquier \cite{ROU08}, Khovanov and Lauda
\cite{KL1} since the category of finitely-generated projective modules over these algebras categorify the
negative part of the associated quantum group, see \cite{KL2}. Furthermore, these algebras act
on some endomorphism spaces of $2$-cells of $\Ug$. 
In Section \ref{SS:KLRAlgebras}, we define following \cite{ROU08} the family of KLR algebras that we specialize to Khovanov and Lauda's diagrammatic presentation. We also restrict to KLR algebras associated to simply-laced Cartan datum, and thus simply-laced symmetrizable Kac-Moody algebras to simplify the computations.
We define in Section \ref{SSS:CategoricalStructure} a linear~$(2,2)$-category $\catklr$ encoding the whole family of KLR algebras $(R(\mathcal{V}))_{\mathcal{V} \in \N [I]}$ in the following sense: for any $\mathcal{V}$ in $\N [I]$, the algebra $R(\mathcal{V})$ can be recovered as some endomorphism spaces of $2$-cells in $\catklr$. In Section \ref{S:KLRAlgebras}, we define a linear~$(3,2)$-polygraph $\text{KLR}$ presenting $\catklr$, and prove the following result:

\begin{quote}
\noindent{\bf Theorem \ref{T:ConvergentPresKLR}}.
\emph{
The linear~$(3,2)$-polygraph $\text{KLR}$ is a convergent presentation of the linear~$(2,2)$-category $\catklr$.}
\end{quote}

Computing monomials in normal form with respect to $\text{KLR}$, we obtain linear bases for each algebra $R(\mathcal{V})$. In particular, we recover the linear bases described by Khovanov and Lauda in  \cite[Theorem 2.5]{KL1}. In \cite[Theorem 3.7]{ROU08}, Rouquier described that these algebras admit a \emph{Poincar\'e - Birkhoff-Witt} (PBW) property, which he proves equivalent to the fact that a given set is a basis of this algebra. In Section \ref{SS:PBW}, we prove that this set correspond to the monomials in normal form for $\text{KLR}$, so that the KLR algebras admit PBW bases.

\subsubsection*{Rewriting modulo isotopy in pivotal categories}
The linear~$(2,2)$-category $\Ug$ is a pivotal linear~$(2,2)$-category. In such a $2$-category, every $1$-cell $x$ admits a dual $1$-cell $\hat{x}$ which is both its right and left adjoint. In this case, for any $i$ indexing the Dynkin graph of the Kac-Moody algebra $\mathfrak{g}$, the lift of the generator $E_i$ multiplied by an idempotent $1_\lambda$ in Lusztig's idempotent completion of the quantum group into the categorification is the dual of the lift of $F_i$ multiplied by the idempotent $ 1_{\lambda + \alpha_i}$. Following \cite{DUP19}, we do not orient the isotopy relations provided by this structure as ordinary rewriting rules, but we rewrite modulo these latter. The theory of rewriting modulo extends the usual theory of rewriting by considering a set $R$ of oriented rules, and a set $E$ of non-oriented equations that we take into account when rewriting. It was explained in \cite{DMpp18,DUP19} that rewriting modulo allows more flexbility in computations to reach confluence. It also reduces the number of overlappings between relations that need to be considered in the analysis of confluence.
In \cite{DMpp18}, a categorical and polygraphic model was introduced to rewrite modulo in various algebraic structures, following Huet \cite{Huet80} and Jouannaud and Kirchner's \cite{JouannaudKirchner84} approaches . An abstract local confluence criteria and a critical branching lemma were proved, under some termination assumption for the rewriting system $\ERE$, consisting in rewriting with $R$ on equivalences classes for the congruence generated by $E$.

In \cite{DUP19}, these results were extended in the linear setting with the introduction of linear~$(3,2)$-polygraphs modulo. Moreover, it is proved in \cite{CDM19} that the termination assumption for $\ERE$ needed to prove confluence modulo from confluence modulo of critical branchings can be weakened to an assumption of quasi-termination and exponentiation freedom, depicting the fact that a rewriting path cannot grow exponentially using the same rewriting rule at each step.
However, proving confluence of a linear~$(3,2)$-polygraph modulo $(R,E,S)$ modulo $E$ when $S$, and thus $\ERE$, is not terminating is more difficult. This can be done using the notion of decreasingness modulo, introduced in \cite{DUP19} following Van Oostrom's abstract decreasingness property \cite{VOO94}. Indeed, recall from \cite[Theorem 2.3.8]{DUP19} that if $(R,E,S)$ is decreasing modulo $E$, it is confluent modulo $E$. Moreover, it is proved in \cite{CDM19} that when $S$ is quasi-terminating, decreasingness can be obtained from decreasingness of critical branchings modulo with respect to the quasi-normal form labelling, counting the distance between a $2$-cell and its fixed quasi-normal form.

In \cite{DUP19}, a method to compute a hom-basis of a linear~$(2,2)$-category using rewriting modulo was given. Namely, when considering a  normalizing linear~$(3,2)$-polygrah modulo, one can consider two different kinds of normal forms: normal forms with respect to $S$ or normal forms with respect to the convergent polygraph $E$ for which we rewrite modulo. It is proved in \cite[Theorem 2.5.4]{DUP19} that if $P$ is a linear~$(3,2)$-polygraph presenting a linear~$(2,2)$-category $\C$, splitted into a linear~$(3,2)$-polygraph $R$ of rewritings and a convergent linear~$(3,2)$-polygraphs of axioms modulo, considering all the monomials in normal form with respect to $S$, then taking their normal form with respect to $E$ and considering all the monomials in these latter gives a hom-basis of $\C$. Moreover, this result is adapted when $S$ is quasi-terminating \cite[Theorem 2.5.6]{DUP19}, by fixing a set of quasi-normal forms for $S$, taking the monomials in quasi-normal form for $S$ and considering the monomials in the support of their $E$-normal form.

\subsubsection*{Categorification of quantum groups}
In the last part of this paper, in Section \ref{S:KLR2Cat}, we define the linear~$(2,2)$-category $\Ag$ and we explicit Brundan's isomorphism with the definition of the additional generators and relations provided by these. Instead of using the presentation suggested by Brundan's paper, we prove additional relations in order to obtain symmetries in our set of relations. We then define a linear~$(3,2)$-polygraph $\KLRb$ presenting the linear~$(2,2)$-category $\Ag$, that we split into two parts: a convergent linear~$(3,2)$-polygraph $E$ containing all isotopy $3$-cells and a linear~$(3,2)$-polygraph $R$ containing the remaining $3$-cells. The pair of linear~$(3,2)$-polygraphs $(R,E)$ is called a \emph{convergent splitting} of $\KLRb$, and we prove:
\begin{quote}
\noindent{\bf Theorem \ref{T:Quasi-ConvModKLR}}.
\emph{Let $(R,E)$ be the convergent splitting of $\KLRb$ defined in \ref{SSS:SplittingKLR}. Then $\ER$ is quasi-terminating and $\ER$ is confluent modulo $E$.
}
\end{quote}

As a consequence, fixing a set of monomials in quasi-normal forms with $1$-source $E_{\mathbf{i}} \one$ and $1$-target $E_{\mathbf{j}} \one$, and taking their normal form with respect to $E$ gives a linear basis of $\Ug(E_{\mathbf{i}} \one, E_{\mathbf{j}} \one)$ for any $1$-cells $E_{\mathbf{i}} \one$ and $E_{\mathbf{j}} \one$ of $\Ug$. We prove that such a choice of quasi-normal form correspond to a choice of Khovanov-Lauda's generating set $\mathcal{B}_{\mathbf{i}, \mathbf{j}, \lambda}$, and thus that the following result holds:
\begin{quote}
\noindent{\bf Theorem \ref{T:BasisKLCategory}}.
\emph{The set $\mathcal{B}_{i,j,\lambda}$ defined in \ref{SSS:MonQuasiNF} is a linear basis of $\Ug (E_{\mathbf{i}} \one, E_{\mathbf{j}} \one)$.
}
\end{quote}
This proves the non-degeneracy of Khovanov and Lauda's diagrammatic calculus in that case, and thus that for a simply-laced symmetrizable Kac-Moody algebra $\mathfrak{g}$, the linear~$(2,2)$-category $\Ug$ is a categorification of the Lusztig's quantum group $\mathbf{U}_q(\mathfrak{g})$ associated to $\mathfrak{g}$.

\subsubsection*{Organization of the paper}
In the first Section of this paper, we recall some properties of linear~$(2,2)$-categories and linear~$(3,2)$-polygraphs from \cite{AL16} and linear~$(3,2)$-polygraphs modulo from \cite{DUP19}. 
In Section \ref{S:KLRAlgebras}, we define following \cite{ROU08} the family of KLR algebras and specialize it to Khovanov and Lauda's diagrammatic definition in simply-laced type \cite{KL1}. In Section \ref{SS:KacMoodySetup}, we introduce all the needed material about Kac-Moody algebras. In Subsection  \ref{SS:LinearPolKLR}, we define a linear~$(3,2)$-polygraph $\KLR$ presenting the linear~$(2,2)$-category $\catklr$. As a consequence, we prove in \ref{SS:PBW} that the KLR algebras admit PBW bases.

In the last section of this paper, Section \ref{S:KLR2Cat}, we extend the study of the KLR algebras to the $2$-category $\Ug$. In Subsection \ref{SS:KLRCategories}, we recall following \cite{BRU15} a diagrammatic presentation for this $2$-category. In Subsection \ref{SS:LinPolKLRb}, we define a linear~$(3,2)$-polygraph presenting $\Ug$ to which we associate a convergent splitting $(R,E)$. We prove that $\ER$ is quasi-terminating in Section \ref{SS:QuasiTermKLR}. In Section\ref{SS:ConfluenceModuloKLR}, we prove that $\ER$ is confluent modulo $E$ using decreasingness of its critical branchings modulo with respect to a quasi-normal form labelling. In Section \ref{SS:CategorificationQuantumGroups}, we compare the linear bases obtained using rewriting modulo to Khovanov and Lauda's generating sets for each space of $2$-cells. We prove that one can make a choice of quasi-normal forms so that these two sets are the same, so that $\Ug$ really is a categorification of $\U_q(\mathfrak{g})$.

\section{Preliminaries}
\label{S:Preliminaries}
If $\mathcal{C}$ is an $n$-category, we denote by $\mathcal{C}_n$ the set of $n$-cells in $\mathcal{C}$. For any $0 \leq k < n$ and any $k$-cells $p$ and $q$ in $\mathcal{C}$, we denote by $\mathcal{C}_{k+1}(p,q)$ the set of $(k+1)$-cells in $\C$ with $k$-souce $p$ and $k$-target $q$. If $p$ is a $k$-cell of $\mathcal{C}$, we denote respectively by $s_i(p)$ and $t_i(p)$ the $i$-source and $i$-target of $p$ for $0 \leq i \leq k-1$. These assignments define source and target maps, satisfying the globular relations
\[ s_i \circ s_{i+1} = s_i \circ t_{i+1} \quad \text{and} \quad t_i \circ s_{i+1} = t_i \circ t_{i+1} \] 
for any $0 \leq i \leq n-2$. Two $k$-cells $p$ and $q$ are \emph{$i$-composable} when $t_i(p) = s_i(q)$. In that case, their $i$-composition is denoted by $p \star_i q$. The compositions of $\mathcal{C}$ satisfy the \emph{exchange relations}:
\begin{equation}
\label{E:ExchangeRel} 
(p_1 \star_i q_1) \star_j (p_2 \star_i q_2) = (p_1 \star_j p_2) \star_i (q_1 \star_j q_2) 
\end{equation}
for any $i < j$ and for all cells $p_1$,$p_2$,$q_1$,$q_2$ such that both sides are defined. If $p$ is a $k$-cell of $\mathcal{C}$, we denote by $1_{p}$ its identity $(k+1)$-cell.
A $k$-cell $p$ of $\mathcal{C}$ is \emph{invertible with respect to $\star_i$-composition} ($i$-invertible for short) when there exists a (necessarily unique) $k$-cell $q$ in $\mathcal{C}$ with $i$-source $t_i(p)$ and $i$-target $s_i(p)$ such that
\[ p \star_i q = 1_{s_i(p)} \quad \text{and} \quad q \star_i p = 1_{t_i(p)} \]

\noindent Throughout this paper, $2$-cells in $2$-categories are represented using the classical representation by string diagrams, see \cite{LAU12,SAV18} for surveys on the correspondance between $2$-cells and diagrams. The $\star_0$ composition of $2$-cells is depicted by placing two diagrams next to each other, the $\star_1$-composition is vertical concatenation of diagrams.

\subsection{Rewriting modulo in linear~$(2,2)$-categories}
In this subsection, we recall from \cite{DUP19} the notions on linear~$(2,2)$-categories and their presentations by linear~$(3,2)$-polygraphs modulo. Throughout this paper, we fix an arbitrary field $\mathbb{K}$.

\subsubsection{Linear~$(2,2)$-categories}
\label{SSS:Linear22Cat}
A \emph{linear~$(2,2)$-category} (over $\mathbb{K}$) is a $2$-category $\mathcal{C}$ such that for any $1$-cells $p$ and $q$ in $\mathcal{C}$, the set $\mathcal{C}_1(p,q)$ of $2$-cells with $1$-source $p$ and $1$-target $q$ is a $\mathbb{K}$-vector space. For any $p,q,r$ in $\C_1$, the map $\star_1: \C_2(p,q) \otimes \C_2(q,r) \to \C_2(p,r)$ is $\K$-linear. If a linear~$(2,2)$-category $\mathcal{C}$ is presented by generators and relations, a $2$-cell $\phi$ obtained using $\star_0$ and $\star_1$ compositions of generating $2$-cells is called a \emph{monomial} in $\C$. Any $2$-cell $\phi$ in $\C$ can be uniquely decomposed into a sum of monomials $\phi= \sum \phi_i$, which we call the \emph{monomial decomposition} of $\phi$. We set the \emph{support} of $\phi$, denoted by $\text{Supp}(\phi)$, to be the set $\lbrace \phi_i \rbrace$ of $2$-cells that appear in this monomial decomposition. Recall from \cite{DUP19} that a \emph{hom-basis} of $\mathcal{C}$ is a family $(\mathcal{B}_{p,q})$ of sets indexed by pairs $(p,q)$ of $1$-cells of $\C$ such that for any $1$-cells $p$ and $q$, $\mathcal{B}_{p,q}$ is a linear basis of $\C_2(p,q)$.

\subsubsection{Linear~$(3,2)$-polygraphs modulo}
Recall from \cite{AL16}, that a (left-monomial) linear~$(3,2)$-polygraph is a triple $P= (P_0,P_1,P_2,P_3)$ where $(P_0,P_1)$ is an oriented graph, $P_2$ is a cellular extension on the free $1$-category denoted by $P_1^\ast$ on $(P_0,P_1)$, that is a set equipped with $1$-source and $1$-target maps $s_1$,$t_1: P_2 \fl P_1^\ast$, and $P_3$ is a cellular category on the free linear~$(2,2)$-category $P_2^\ell$ generated by $(P_0,P_1,P_2)$ such that for any $\alpha$ in $P_3$, $s_2(\alpha)$ is a monomial in $P_2^\ell$. For a cellular extension $\Gamma$ of $P_1^\ast$, we will denote by $||f||_\Gamma$ the number of occurences of $2$-cells of $\Gamma$ in the $2$-cell $f$ in $P_2^\ast$.

Let $P$ be a linear~$(3,2)$-polygraph, we denote by $P_{\leq k}$ the underlying $k$-polygraph of $P$, for $k = 1,2$. We denote by $P_3^\ell$ the free linear~$(3,2)$-category generated by $P$, as defined in \cite[Section 3.1]{ALPhD}. Recall from \cite[Proposition 1.2.3]{GHM17} that every $3$-cell $\alpha$ in $P_3^\ell$ is $2$-invertible, its inverse being given by $1_{s_2(\alpha)} - \alpha + 1_{t_2(\alpha)}$. The \textit{congruence} generated by $P$ is the equivalence relation $\equiv$ on $\Sl$ defined by 
\[ \text{$ u \equiv v$ if there is a $3$-cell $\alpha$ in $P_3^\ell$ such that $s_2(\alpha) = u$ and $t_2(\alpha)=v$}. \]
 We say that a linear~$(2,2)$-category $\mathcal{C}$ is presented by $P$ 
if $\mathcal{C}$ is isomorphic to the quotient category $\Sl \slash \equiv$. A \textit{rewriting step} of a linear~$(3,2)$-polygraph $P$ is a $3$-cell of the following form:
\begin{equation}
\label{E:RewritingStep}
 C[\alpha]: \; \lambda \context{s_2(\alpha)} + h \fl \lambda \context{t_2(\alpha)} + h, 
\end{equation}
where $\alpha$ is a generating $3$-cell in $P_3$, the $m_i$ are monomials in $P_2^\ell$ and $h$ is a $2$-cell in $P_2^\ell$, such that the monomial $m_1 \star_1 ( m_2 \star_0 s_2(\alpha) \star_0 m_3)\star_1 m_4$ does not appear in the monomial decomposition of $h$. The element $C = \lambda \context{ \square} + h$ is a \emph{context} of the linear~$(2,2)$-category $P_2^\ell$, as defined in \cite{DUP19}, and is called the context of application of the rule $\alpha$ in (\ref{E:RewritingStep}). Such a rewriting step will thus be denoted by $C[\alpha] : C[s_2(\alpha)] \fl C[t_2(\alpha)]$ in the sequel. 
A \textit{rewriting sequence} of $P$ is a finite or infinite sequence of rewriting steps of $P$. We say that a $2$-cell is a \textit{normal form} if it can not be reduced by any rewriting step.

A $3$-cell $\alpha$ of $P_3^\ell$ is called \emph{positive} if it is a $\star_2$-composition $\alpha= \alpha_1 \star_2 \dots \star_2 \alpha_n$ of rewriting steps of $P$. The \emph{length} of a positive $3$-cell $\alpha$ in $P_3^\ell$ is the number of rewriting steps of $P$ needed to write $\alpha$ as a $\star_2$-composition of these rewriting steps.
The linear~$(3,2)$-polygraph $P$ equipped with this notion of rewriting step defines an abstract rewriting system.

\subsubsection{Termination and confluence}
A \emph{branching} (resp. \emph{local branching}) of a linear~$(3,2)$-polygraph $P$ is a pair of rewriting sequences (resp. rewriting steps) of $P$ which have the same $2$-cell as $2$-source.
Such a branching is \emph{confluent} if it can be completed by rewriting sequences $f'$ and $g'$ of $P$ as follows:
\[ \xymatrix @C=2.6em@R=1.2em{
& v
	\ar @/^1.5ex/ [dr] ^-{f'}
\\
u 
	\ar @/^1.5ex/ [ur] ^-{f}
	\ar @/_1.5ex/ [dr] _-{g}
&& u'
\\
& w
	\ar @/_1.5ex/ [ur] _-{g'}
}	\]

\noindent A linear~$(3,2)$-polygraph $P$ is said:
\begin{enumerate}[{\bf i)}]
\item \emph{terminating} if there is no infinite rewriting sequences in $P$.
\item \emph{quasi-terminating} if for each sequence $(u_n)_{n \in \N}$ of $2$-cells such that for each $n$ in $\N$ there is a rewriting step from $u_n$ to $u_{n+1}$, the sequence $(u_{n})_{n \in \N}$ contains an infinite number of occurences of the same $2$-cell.
\item \emph{confluent} if all the branchings of $P$ are confluent.
\item \emph{convergent} if it is both terminating and confluent.
\item \emph{exponentiation free} is for any $2$-cell $u$, there does not exist a $3$-cell $\alpha$ in $P_3^\ell$ such that $$ 
\text{$u \overset{\alpha}{\fl} \lambda u + h$ with $\lambda \in \K \backslash \{ 0 \}$ and $h \ne 0$}. $$ 
\end{enumerate}

 A \emph{normal form} of a linear~$(3,2)$-polygraph $P$ is a $2$-cell $u$ that cannot be rewritten by any rewriting step of $P$. When $P$ is terminating, any $2$-cell admits at least one normal form, and exactly one when it is also confluent. A \emph{quasi-normal form} is a $2$-cell $u$ such that for any rewriting step from $u$ to another $2$-cell $v$, there exist a rewriting sequence from $v$ to $u$.

Newman's lemma states that if a linear~$(3,2)$-polygraph $P$ is terminating, then the confluence of $P$ is equivalent to its local confluence. Alleaume proved in \cite{AL16} that if $P$ is a left-monomial and terminating linear~$(3,2)$-polygraph, then it is locally confluent if and only if its critical branchings are confluent. He also established that a hom-basis of a linear~$(2,2)$-category can be obtained from a convergent presentation of $\mathcal{C}$. More precisely, if $\mathcal{C}$ is a linear~$(2,2)$-category presented by a convergent linear~$(3,2)$-polygraph $P$, then the set of all monomials in normal form with respect to $P$ is a hom-basis of $\mathcal{C}$, where $\mathcal{B}_{p,q}$ is the set of all monomials in normal form with $1$-source $p$ and $1$-target $q$.

\subsubsection{Linear~$(3,2)$-polygraphs modulo}
A linear~$(3,2)$-polygraph modulo is the data of a triple $(R,E,S)$ where 
\begin{enumerate}[{\bf i)}]
\item $R$ and $E$ are linear~$(3,2)$-polygraphs having the same underlying $1$-polygraph, and such that $E_2 \subseteq R_2$,
\item $S$ is a cellular extension of the free linear~$2$-category $R_2^\ell$ such that the following inclusions of cellular extensions $R \subseteq S \subseteq \ERE$ holds, where the cellular extension \[ \ERE \overset{\gamma^{\ERE}}{\fl} \text{Sph}(R_2^\ell) \] correspond to $2$-spheres $(u,v) \in R_{2}^\ell$ which is the boundary of a $3$-cell $f$ in $R^\ell_{2}[R_3,E_3,E_3^-]/\text{Inv}(E_3,E_3^-)$, the free linear~$(2,2)$-category generated by $(R_0,R_1,R_2)$ augmented by the cellular extensions $R$, $E$ and the formal inverses $E^-$ of $E$ modulo the corresponding inverse relations, with shape \[ f = e_1 \star_{2} f_1 \star_{2} e_2, \] where $e_1,e_2$ are $3$-cells in $E_3^\ell$ and $f_1$ a rewriting step of $R$.
\end{enumerate}

We refer to \cite{DMpp18} for a detailed definition of the definition of higher-dimensional polygraphs modulo. Given a linear~$(3,2)$-polygrah modulo $(R,E,S)$, the quadruple $(R_0,R_1,R_2,S)$ is a linear~$(3,2)$-polygraph that we denote by $S$ in the sequel.

\subsection{Confluence modulo and decreasingness}
We recall from \cite{DUP19} confluence properties for linear~$(3,2)$-polygraphs modulo, and we give different methods to prove confluence modulo from local confluence modulo assumptions. In this subsection, we fixe a linear~$(3,2)$-polygraph modulo $(R,E,S)$.

\subsubsection{Branchings and confluence modulo}
A \emph{branching modulo $E$} of the linear $(3,2)$-polygraph modulo~$(R,E,S)$ is a triple~$(f,e,g)$ where $f$ and $g$ are positive $3$-cells of $S_3^\ell$ of length $1$ with $f$ non-identity and $e$ is a $3$-cell in $E_3^\ell$. Such a branching is depicted by
\begin{equation}
\label{E:branchingModulo}
\raisebox{0.55cm}{
\xymatrix @R=1.5em @C=2em {
u 
  \ar[r] ^-{f} 
  \ar[d] _-{e}
& 
u'
\\
v
  \ar[r] _-{g} 
&
v'
}}
\end{equation}
A branching modulo $E$ as in (\ref{E:branchingModulo}) is \emph{confluent modulo $E$} if there exists positive $3$-cells~$f',g'$ in~$S_3^\ell$ and a $3$-cell $e'$ in $E_3^\ell$ as in the following diagram:
\[
\raisebox{0.55cm}{
\xymatrix @R=1.5em @C=2em {
u
  \ar[r] ^-{f}
  \ar[d] _-{e}
&
u' 
  \ar@{.>}[r] ^-{f'} 
& 
w
  \ar@{.>}[d] ^-{e'}
\\
v
  \ar [r] _-{g}
&
v'
  \ar@{.>}[r] _-{g'} 
&
w'
}}
\]
We then say that the triple $(f',e',g')$ is a confluence modulo $E$ of the branching $(f,e,g)$ modulo $E$. The linear~$(3,2)$-polygraph $S$ is \emph{confluent modulo $E$} if all its branchings modulo $E$ are confluent modulo~$E$. A branching $(f,e,g)$ modulo $E$ is \emph{local} if $f$ is a positive $3$-cell of $S_3^\ell$ of length $1$, $g$ is a positive $3$-cell of $S_3^\ell$ and $e$ is a $3$-cell of $E_3^\ell$ such that $\ell(g) + \ell(e) = 1$. Following \cite[Section 2.2.6]{DUP19}, local branchings are classified in the following families: local aspherical, local Peiffer, local additive, local Peiffer modulo, local additive modulo and overlappings modulo which are all the remaining local branchings modulo.
Let $\sqsubseteq$ be the order on monomials of the linear~$(3,2)$-polygraph $S$ defined by $f \sqsubseteq g$ if there exists a context $C = \context{\square}$ of the free $2$-category $S_2^\ast$ generated by $S$ such that $g = C[f]$.
A \emph{critical branching modulo $E$} is an overlapping local branching modulo $E$ that is minimal for the order $\sqsubseteq$. When $\ERE$ is terminating, \cite[Theorem 2.2.7]{DUP19} proves that $S$ is confluent modulo $E$ if and only if the critical branchings $(f,e)$ and $(f,g)$ of $S$ modulo $E$ with $f$ positive $3$-cell in $S_3^\ell$ of length $1$, $g$ positive $3$-cell in $R_3^\ell$ of length $1$ and $e$ in $E_3^\ell$ of length $1$ are confluent modulo $E$.

\subsubsection{Labelling to the quasi-normal form}
To prove termination of $(R,E,S)$ when $\ERE$ is quasi-terminating, we introduce the notion of decreasingness modulo following the property of decreasingness introduced by Van-Oostrom in \cite{VOO94}.
Given a quasi-terminating linear~$(3,2)$-polygraph $P$, any $2$-cell $u$ in $P_2^\ell$ admits at least quasi normal form. For such a $2$-cell $u$, we fix a choice of a quasi normal form denoted by $\widetilde{u}$. Then we get a quasi-normal form map $s: P_2^\ast \fl P_2^\ast$ sending a $2$-cell $u$ in $P_2^\ast$ on $\widetilde{u}$. The \emph{labelling to the quasi-normal form}, labellling QNF for short associates to the map $s$ the labelling $\psiqnf: P_{\text{stp}} \fl \mathbb{N}$ defined by
\[ \psiqnf (f) = d(t_1(f), \cl{t_1(f)}) \]
where $d(t_1(f), \cl{t_1(f)})$ represent the minimal number of rewriting steps needed to reach the quasi normal form $\cl{t_1(f)}$ from $t_1(f)$, and $P_{\text{stp}}$ denotes the set of rewriting steps of $P$. Given a rewriting sequence $f = f_1 \star_1 \ldots \star_1 f_k$, we denote by $L^X(f)$ the set $\{ \psiqnf (f_1) ,\ldots , \psiqnf (f_k) \}$.

\subsubsection{Decreasingness modulo}
Let $(R,E,S)$ be a quasi-terminating linear~$(3,2)$-polygraph modulo equipped with its labelling $\psiqnf$ to the quasi-normal form on $S$. Recall from \cite{DUP19} that a local branching $(f,g)$ (resp. $(f,e)$) of $S$ modulo $E$ is decreasing modulo $E$ if there exists confluence diagrams of the following form
\[
\raisebox{0.55cm}{
	\xymatrix @R=2em @C=2em {
		{}
		\ar[r] ^-{f}
		\ar[d] _-{\fleq}
		&
		{} 
		\ar@{.>}[r] ^-{f'} 
		& 
		{} \ar@{.>}[r] ^-{g''} 
		& {} \ar@{.>}[r] ^-{h_1} 
		& {}
		\ar@{.>}[d] ^-{e'}
		\\
		{}
		\ar [r] _-{g}
		&
		{}
		\ar@{.>}[r] _-{g'} 
		& {} \ar@{.>}[r] _-{f''} 
		& {} \ar@{.>}[r] _-{h_2}
		&
		{}
	}} , \qquad \text{ (resp. }
	\raisebox{0.55cm}{
	\xymatrix @R=2em @C=2em {
		{}
		\ar[r] ^-{f}
		\ar[d] _-{e}
		&
		{} 
		\ar@{.>}[r] ^-{f'} 
		& {} \ar@{.>}[r] ^-{h_1} 
		& {}
		\ar@{.>}[d] ^-{e'}
		\\
		{}
		\ar@{.>} [rrr] _-{h_2}
		&
		{}
		& {} 
		&
		{}
	}} )
	\]
	such that the following properties hold:
	\begin{enumerate}[{\bf i)}]
		\item $k < \psiqnf (f)$ for all $k$ in $L^X(f')$.
		\item $k < \psiqnf (g)$ for all $k$ in $L^X (g')$.
		\item $f''$ is an identity or a rewriting step labelled by $\psiqnf (f)$.
		\item $g''$ is an identity or a rewriting step labelled by $\psiqnf (g)$.
		\item $k < \psiqnf (f)$ or $k < \psiqnf (g)$ for all $k$ in $L^X(h_1) \cup L^X(h_2)$ (resp. $k \leq \psiqnf (f)$ for any $k$ in $L^X(h_2)$ and $k' < \psiqnf (f)$ for any $k'$ in $L^X(h_1)$).
	\end{enumerate}

We then say that a linear~$(3,2)$-polygraph $(R,E,S)$ is decreasing modulo $E$ if all its local branchings are decreasing modulo $E$. From \cite[Theorem 2.3.8]{DUP19}, if $(R,E,S)$ is decreasing modulo $E$, then it is confluent modulo $E$. Following \cite{CDM19,DUP19}, one can prove confluence modulo of a linear~$(3,2)$-polygraph modulo $(R,E,S)$ such that $\ERE$ is quasi-terminating and $S$ is exponentiation free by proving decreasingness of its critical branchings.

\subsection{Linear bases from confluence modulo}
We recall following \cite{DUP19} the method to compute an hom-basis of a linear~$(2,2)$-category from a presentation of this latter by a linear~$(3,2)$-polygraph $P$ such that a subset of the relations satisfy an assumption of confluence modulo the remaining relations.

\subsubsection{Splitting of a linear~$(3,2)$-polygraph} Given a linear~$(3,2)$-polygraph $P$, recall that a \emph{subpolygraph} of $P$ is a linear~$(3,2)$-polygraph $P'$ such that $P'_i \subseteq P_i$ for any $0 \leq i \leq 3$. A \emph{splitting} of $P$ is a pair $(E,R)$ of linear~$(3,2)$-polygraphs such that:
\begin{enumerate}[{\bf i)}]
\item $E$ is a subpolygraph of $P$ such that $E_{\leq 1} = P_{\leq 1}$,
\item $R$ is a linear~$(3,2)$-polygraph such that $R_{\leq 2} = P_{\leq 2}$ and $P_3 = R_3 \coprod E_3$.
\end{enumerate}
Such a splitting is called  \emph{convergent} if we require that $E$ is convergent. Note that any linear~$(3,2)$-polygraph $P$ admits a convergent splitting given by $(P_0,P_1,P_2,\emptyset)$ and $(P_0,P_1,P_2,P_3)$. It is not unique in general. The data of a convergent splitting of a linear~$(3,2)$-polygraph $P$ gives two distinct linear~$(3,2)$-polygraphs $R= (P_0,P_1,P_2,R_3)$ and $E= (P_0,P_1,E_2,E_3)$ satisfying $R_{\leq 1} = E_{\leq 1}$ and $E_2 \subseteq P_2$, so that we can construct a linear~$(3,2)$-polygraph modulo from $R$ and $E$.

\subsubsection{(Quasi)-Normal forms modulo}
Let us consider a linear~$(3,2)$-polygraph modulo $(R,E,S)$ such that $(R,E,S)$ is confluent modulo $E$. We define two notions of normal forms modulo, depending on whether is terminating (or at least normalizing, that is each rewriting sequence reaches a normal form) or quasi-terminating.
 
 If $S$ is normalizing, each $2$-cell $u$ of $R_2^\ell$ admits at least one normal form with respect to $E$, and all these normal forms are congruent with respect to $E$. We fix such a normal form that we denote by $\widehat{u}$, with the convention that if $u$ is already a normal form with respect to $E$, then $\widehat{u} = u$. By convergence of $E$, any $2$-cell $u$ of $R_2^\ell$ admits a unique normal form with respect to $E$, that we denote by $\widetilde{u}$. Note that when $S$ is confluent modulo $E$, the element $\tilda{u}$ does not depend on the chosen normal form $\widehat{u}$ for $u$ with respect to $S$, since two normal forms of $u$ being equivalent with respect to $E$, they have the same normal form with respect to $E$. A \emph{normal form for $(R,E,S)$} of a $2$-cell $u$ in $R_2^\ell$ is a $2$-cell $v$ such that $v$ appears in the monomial decomposition of $\widetilde{w}$ where $w$ is a monomial in the support of $\widehat{u}$. Such a set is obtained by reducing a $2$-cell $u$ in $R_2^\ell$ into its chosen normal form with respect to $S$, then taking all the monomials appearing in the $E$-normal form of each element in $\text{Supp}(\widehat{u})$. Note that when $E$ is also right-monomial, as it is the case for the linear~$(3,2)$-polygraph of isotopies, then the $E$-normal form of a monomial in normal form with respect to $S$ already is a monomial.

If $S$ is quasi-terminating, instead of fixing a normal form $\widehat{u}$ with respect to $S$ for any $u$ in $R_2^\ell$, we fix a choice of a quasi-normal form $\cl{u}$ for $u$ satisying $\cl{u} = u$ if $u$ already is a quasi-normal form with respect to $S$. By confluence modulo, $u$ and $v$ are $2$-cells of $R_2^\ell$ such that there is a $3$-cell $e: u \fl v$ in $\tck{E}$, then the $2$-cells $\cl{u}$ and $\cl{v}$ are equivalent modulo $E$. A \emph{quasi-normal form} for $(R,E,S)$ is a monomial appearing in the monomial decomposition of the $E$-normal form of a monomial in $\text{Supp}(\cl{u})$.

\subsubsection{Hom-bases from confluence modulo}
Following \cite[Theorems 2.5.4 $\&$ 2.5.6]{DUP19}, if $P$ is a linear~$(3,2)$-polygraph presenting a linear~$(2,2)$-category $\mathcal{C}$, $(E,R)$ is a splitting of $P$ and $(R,E,S)$ is a linear~$(3,2)$-polygraph modulo such that $S$ is normalizing (resp. quasi-terminating), and confluent modulo $E$, then the set of normal forms (resp. quasi-normal forms) for $(R,E,S)$ is a hom-basis of $\mathcal{C}$.

\section{A convergent presentation of the simply-laced KLR algebras} 
\label{S:KLRAlgebras}
We provide a convergent presentation of a linear~$(2,2)$-category $\catklr$ encoding the KLR algebras in its spaces of $2$-cells, and prove that these algebras admit Poincar\'e-Birkhoff-Witt bases.

\subsection{Cartan datum and Kac-Moody algebras} \label{SS:KacMoodySetup}
Let us recall the notions of Cartan datum and root datum needed to introduce the KLR algebras and the $2$-category $\Ug$. 

\subsubsection{Cartan matrices and Cartan datum} A matrix $A=(a_{i,j}) \in \mathcal{M}_n(\mathbb{K})$ is called a \emph{generalized Cartan matrix} if it satisfies the following conditions:

\begin{enumerate}[{\bf i)}]
\item for any $1 \leq i \leq n$, $a_{i,i} = 2$;
\item for any $ 1 \leq i < j \leq n$, $a_{i,j} \in \Z_{<0}$;
\item for any $ 1 \leq i , j \leq n$, $a_{i,j}=0$ if and only if $a_{j,i} = 0$.
\end{enumerate}
Given $A=(a_{i,j})_{1 \leq i,j \leq n}$ a matrix of rank $l$ with coefficients in $\mathbb{K}$, we say that \textit{a realization of $A$} is the data of a 
triple $(\mathfrak{h}, \Pi, \Pi^\vee)$ where $\mathfrak{h}$ is a $\mathbb{K}$-vector space and $\Pi= \lbrace \alpha_1, \dots, \alpha_n \rbrace \subset \mathfrak{h}^*$, $\Pi^\vee = \lbrace \alpha_1^\vee , \dots, \alpha_n^\vee \rbrace \subset \mathfrak{h} $ satisfying:
\begin{center}
\begin{enumerate}[{\bf i')}]
\item $\Pi$ and $\Pi^\vee$ are free;
\item For all $1 \leq i,j \leq n$, $\langle \alpha_i^\vee , \alpha_j \rangle = a_{i,j}$;
\item dim($\mathfrak{h}$)= $2n-l$.
\end{enumerate}
\end{center}
We call $\Pi$ the basis of roots and $\Pi^\vee$ the basis of co-roots. The elements of $\Pi$ and $\Pi^\vee$ are respectively called \textit{simple roots} and \textit{simple co-roots}. Following \cite[Chapter 1]{KAC90}, from such a generalized Cartan matrix and a realization of it, one can build a \textit{Kac-Moody algebra} $\mathfrak{g}(A)$. As in the usual representation theory of Lie algebras, an integrable $\mathfrak{g}(A)$-module admits a decomposition of the form 
$$ \text{$ V = \bigoplus\limits_{\lambda \in \mathfrak{h}^\star} V_\lambda$ where $V_\lambda= \lbrace v \in V \quad | \quad h(v)= \langle \lambda,h \rangle v$ for $h \in \mathfrak{h} \rbrace$}. $$
$V_\lambda$ is then called a \textit{weight space} and $\lambda \in \mathfrak{h}^\star$ is called a \textit{weight} if $V_\lambda \ne 0$. \\

\begin{definition}
A \textit{Cartan datum} $(I,\cdot)$ consists of a finite set $I$ and a bilinear form on $\Z[I]$, taking values in $\Z$ such that:
\begin{itemize}
\item[{\bf i)}] $i.i \in \lbrace 2,4,6, \dots \rbrace  $ for any $i \in I$;
\item[{\bf ii)}] $ - d_{i,j} := 2 \frac{i.j}{i.i} \in \lbrace 0,-1,-2, \dots \rbrace $ for any $i \ne j \in I$.
\end{itemize}
We say that such a Cartan datum is \textit{simply-laced} if the two following conditions hold:
\begin{itemize}
\item[{\bf i')}] For any $i \in I$, $i \cdot i=2$; 
\item[{\bf ii')}] For any $i,j \in I$,  $i \cdot j \in \lbrace 0,-1 \rbrace$.
\end{itemize}
\end{definition}

\begin{remark}
If we set $(I, \cdot)$ a Cartan datum and $A= \left( 2 \frac{i.j}{i.i} \right)_{1 \leq i,j \leq \# I}$, then A is a generalized Cartan matrix and so we can associate to each Cartan datum a Kac-Moody algebra.
\end{remark}

From now, we fix $\Gamma$ a non-oriented graph (with possible loops and multiple edges) whose set of vertices is denoted by $I$. We assume here that $I$ is a finite set. In general, there is a one-to-one correspondance between such graphs and Cartan data. In particular, let $\Gamma$ be a simply-laced graph, that is without loops nor multiple edges. Then we build a simply-laced Cartan datum associated to it as follows: let $\cdot $ be a bilinear form on $\Z[I]$ such that: \begin{equation} \label{sldatum} \left\lbrace \begin{array}{ccc}
 i \cdot i = 2  &          \\
 i \cdot j = -1 &  \textrm{if there is an edge in $\Gamma$ from $i$ to $j$} \\
 i \cdot j = 0  & \textrm{otherwise}.
 \end{array} \right. 
 \end{equation} 
 
\subsubsection{Root datum and quantum groups}
Let $(I,\cdot)$ be a Cartan datum. \textit{A root datum of type $(I,\cdot)$} consists of
\begin{itemize}
  \item two free finitely generated abelian groups $X$,$Y$ and a perfect
  pairing $\langle,\rangle \maps Y \times X \to \Z$;
  \item injections $I \subset X $ $\;(i \mapsto i_X)$ and $I \subset
  Y$ $\; (i \mapsto i)$ such that $\langle i, j_X \rangle = 2\frac{i \cdot j}{i \cdot i}= -d_{ij}$ for all $i,j \in
  I$.
\end{itemize}

We associate to such a root datum a quantum group $\textbf{U}$, which is the unital associative  $\Q(q)$-algebra given by generators $E_i$, $F_i$, $K_{\mu}$ for $i
\in I$ and $\mu \in Y$, subject to a family of relations given in \cite[Section 3.1]{LUS10}.

\subsubsection{The sets $\text{Seq}(\mathcal{V})$ and $\text{SSeq}(\mathcal{V})$} Following \cite{KL1,KL3}, we introduce useful sets to define the KLR algebras and the KLR $2$-category. Let $I$ be the set of vertices of a simply-laced graph $\Gamma$. Let $\mathcal{V}= \sum\limits_{i \in I}{\mathcal{V}_{i} . i} \in \N[I]$ be an element of $\mathbb{N}[I]$, the free semi-group generated by $I$. We set $m := | \mathcal{V}| = \sum{\mathcal{V}_i}$. \\

We consider the set $\textrm{Seq}(\mathcal{V})$ which consists of all sequences of vertices of $\Gamma$ with length $m$ in which the vertex $i$ appears exactly $\mathcal{V}_{i}$ times. 
For instance, $\textrm{Seq}( 3i + j) = \lbrace iiij, iiji, ijii, jiii \rbrace$. There is an action of the symmetric group $\mathcal{S}_m$ on the set $\textrm{Seq}(\mathcal{V})$ defined by
\[ s_k \cdot i_1 \dots i_m = i_1 \dots i_{k+1} i_k \dots i_m \]
for any $1 \leq k \leq m-1$, where $s_k$ denotes the permutation $(k \: \: k+1)$ of $\mathcal{S}_m$.

We will also consider in Section \ref{S:KLR2Cat} a signed version of this set, with \textit{signed sequences} of vertices of $\Gamma$: 
$$ \text{$ \textbf{i} =
(\epsilon_1i_1,\epsilon_2i_2, \dots, \epsilon_mi_m)$, where $\epsilon_1, \dots,
\epsilon_m \in \{ +,-\}$ and $i_1, \dots, i_m \in I$}. $$
We define $\SSv$ to be the set of all such signed sequences. We say that a sequence is \textit{positive} (resp. \emph{negative}) if all signs $\epsilon_i$ are positive (resp. negative).

\subsection{The KLR algebras}
\label{SS:KLRAlgebras}
We recall here Rouquier's algebraic definition of the KLR algebras \cite{ROU08} and their diagrammatic interpretation provided by Khovanov and Lauda in \cite{KL1}.

\begin{definition} \label{D:KLRAlgebras} \cite[Definition 3.2.1]{ROU08}
Let $Q=(Q_{i,j})_{i,j \in I}$ a matrix with coefficients in $\mathbb{K}[u,v]$, where $u$ and $v$ are indeterminates, such that $Q_{i,i} = 0$ for any $i$ in $I$. For any $\mathcal{V}$ in $\N[I]$, we define a (possibly non-unitary) $\mathbb{K}$-algebra $H_\mathcal{V}(Q)$ by generators and relations. It is generated by elements $1_{\textbf{i}}, x_{k,\textbf{i}}$ for $ k \in \lbrace 1,\dots, n \rbrace $ and $\tau_{k, \textbf{i}}$ for $ k \in \lbrace 1,\dots, n \rbrace $ and $\textbf{i} \in \textrm{Seq}(\mathcal{V})$.
The relations are:
\begin{enumerate}[{\bf i)}]
\begin{multicols}{2}
\item $1_{\textbf{i}} 1_{\textbf{j}} = \delta_{\textbf{i},\textbf{j}} 1_{\textbf{i}}$
\item $\tau_{k, \textbf{i}} = 1_{s_{k}(\textbf{i})} \tau_{k,\textbf{i}} 1_{\textbf{i}}$
\item $x_{k, \textbf{i}} = 1_{\textbf{i}} x_{k, \textbf{i}} 1_{\textbf{i}}$
\item $x_{k, \textbf{i}} x_{l, \textbf{i}} = x_{l, \textbf{i}} x_{k, \textbf{i}} $
\item $\tau_{k, s_k(\textbf{i})} \tau_{k, \textbf{i}} = Q_{i_k, i_{k+1}}(x_{k, \textbf{i}}, x_{k+1,\textbf{i}})$
\item $\tau_{k, s_l(\textbf{i})} \tau_{l, \textbf{i}} = \tau_{l, s_k(\textbf{i})} \tau_{k, \textbf{i}}$ if $|k-l| > 1$
\end{multicols}
\item $\tau_{k,\textbf{i}} x_{l, \textbf{i}} - x_{s_k(l),s_k(\textbf{i})} \tau_{k,\textbf{i}} = \left\lbrace \begin{array}{ccc}
-1_{\textbf{i}}   & \mbox{if} & l=k \quad \textrm{and} \quad i_k = i_{k+1} \\
1_{\textbf{i}} & \mbox{if} & l=k+1 \quad \textrm{and} \quad i_k=i_{k+1} \\
0 & & \textrm{otherwise} .

\end{array} \right.$
\item $\tau_{k+1, s_k s_{k+1}(\textbf{i})} \tau_{k, s_{k+1}(\textbf{i})} \tau_{k+1, \textbf{i}} - \tau_{k, s_{k+1} s_k ( \textbf{i}) } \tau_{k+1, s_k(\textbf{i})} \tau_{k, \textbf{i}} = \\ \left\lbrace
\begin{array}{ccc}
(x_{k+2,\textbf{i}} - x_{k,\textbf{i}})^{-1} (Q_{i_k,i_{k+1}}(x_{k+2,\textbf{i}},x_{k+1,\textbf{i}}) - Q_{i_k,i_{k+1}}(x_{k,\textbf{i}},x_{k+1,\textbf{i}}))  & \mbox{if} & i_k = i_{k+2}  \\
0 &  & \textrm{otherwise} 

\end{array} \right. $

\end{enumerate}
\end{definition}

In \cite{KL2}, Khovanov and Lauda gave a definition of a ring associated to an element $\mathcal{V}\in \N[I]$ which is in fact a specialization of Rouquier's algebra $H_\mathcal{V}(Q)$ in which $$ \text{ $Q_{i,j}(u,v)= u^{d_{i,j}} + v^{d_{j,i}}, \quad \forall \quad i,j \in I$, where $d_{i,j}= -2 \frac{i \cdot j}{i \cdot i}$}.$$ 
In the simply-laced setting, these coefficients are equal to $0$ when $i$ and $j$ are not linked by an edge in the graph and to $1$ when they are. Moreover, they provide a diagrammatic approach for these algebras: for $\textbf{i} = i_1 \dots i_m \in \textrm{Seq}(\mathcal{V})$, the generators are pictured by the diagrams
\[ x_{k, \textbf{i}} = \gendot{i_1}{i_k}{i_m} \quad \text{and} \quad \tau_{k,\textbf{i}} =\gencross{i_1}{i_k}{i_{k+1}}{i_m}  \]

\noindent The local relations of \ref{D:KLRAlgebras} are then diagrammatically depicted by:

\begin{eqnarray} \label{e:klr1} 
   \scalebox{0.8}{\xy   (0,0)*{\tcross{i}{j}}; \endxy}
 & = & \left\lbrace
\begin{array}{ccc}
  0 & \qquad & \text{if $i=j$, } \\ \\
  \scalebox{0.8}{\did{i}{j}}
  & &
 \text{if $i \cdot j=0$, }
  \\    \\
  \scalebox{0.8}{\diddl{i}{j}{d_{i,j}}}
  \quad \raisebox{4mm}{$+$} \quad
   \scalebox{0.8}{\diddr{i}{j}{d_{j,i}}}
 & &
 \text{if $i \cdot j=-1$. }
\end{array}
\right.
\end{eqnarray}

\begin{eqnarray} \label{klr2}
  \scalebox{0.8}{\xy  (0,0)*{\dcrossul{i}{j}};  \endxy}
 \quad  = \;\;
   \scalebox{0.8}{\xy  (0,0)*{\dcrossdr{i}{j}};   \endxy} + \; \delta_{i,j}  \scalebox{0.8}{\xy (-3,0)*{\ident{i}}; (3,0)*{\ident{i}}; \endxy}
\; , \qquad
   \scalebox{0.8}{\xy  (0,0)*{\dcrossur{i}{j}};  \endxy}
 \quad = \;\;
   \scalebox{0.8}{\xy  (0,0)*{\dcrossdl{i}{j}};  \endxy} - \; \delta_{i,j}  \scalebox{0.8}{\xy (-3,0)*{\ident{i}}; (3,0)*{\ident{i}}; \endxy}
\end{eqnarray}

\begin{eqnarray} \label{ybg_simplylaced}   
\scalebox{0.9}{\xy  (0,0)*{\ybg{i}{j}{k}}; \endxy}
  &=&
\scalebox{0.9}{\xy (0,0)*{\ybd{i}{j}{k}}; \endxy}
 \qquad \text{unless $i=k$ and $i \cdot j=-1$   \hspace{1in} }
\end{eqnarray}
\begin{eqnarray}
\label{ybg_simplylaced2}   
\hspace{-2cm}      
\scalebox{0.9}{\xy (0,0)*{\ybg{i}{j}{i}}; \endxy}
  &-&
\scalebox{0.9}{\xy (0,0)*{\ybd{i}{j}{i}}; \endxy}
 \quad = \quad \raisebox{-7mm}{$\scalebox{0.9}{\tid{i}{j}{i}}$} 
 \qquad \text{if $i \cdot j=-1$ }
\end{eqnarray}

Note that the first diagrammatic relation corresponds to the relation $\mathbf{v)}$ in \ref{D:KLRAlgebras}, the second relation corresponds to relation $\mathbf{vii)}$ and the last one corresponds to relation $\mathbf{viii)}$ for this particular choice of polynomials $Q_{i,j}$. The other relations are not taken into account since they are structural relations when the algebra is interpreted as endomorphism spaces of $2$-cells in the linear~$(2,2)$-category $\catklr$ defined in \ref{SSS:CategoricalStructure}. Namely, the first relation corresponds to the fact that $1_{\mathbf{i}}$ is an identity, and the other relations correspond to exchange relations (\ref{E:ExchangeRel}).
\begin{remark}
\label{R:SimplyLaced}
	Throughout this paper, we study the case of simply-laced Cartan datum for simplicity in the proofs of confluence of critical branchings. In the general case, the KLR relations are more complicated, for instance the relation reducing a double crossing or the Yang-Baxter braid become
	\[  \raisebox{-5mm}{$\scalebox{0.9}{\tcross{i}{j}}$} =  \raisebox{-5mm}{$\didldot{i}{j}{d_{i,j}}$} +  \raisebox{-5mm}{$\didrdot{i}{j}{d_{j,i}}$} \]
	whenever $i \cdot j \ne 0$, and
	\[ \raisebox{-7mm}{$\scalebox{0.9}{\ybg{i}{j}{k}}$} =  \raisebox{-7mm}{$\scalebox{0.9}{\ybd{i}{j}{k}}$} + \sum\limits_{a=0}^{d_{i,j}-1}  \raisebox{-6mm}{$\tidlrdots{i}{j}{k}{a}{d_{i,j}-1-a}$} \]
	whenever $i = k$ and $i \cdot j \ne 0$. However, we expect that the proof of confluence in the general setting works similarly as in the simply-laced setting, but the confluence of critical branchings is more difficult to ensure due to these relations.
\end{remark}

\subsubsection{$2$-categorical structure}
\label{SSS:CategoricalStructure}
Following \cite{KL1},  we consider for any $\textbf{i}$ and $\textbf{j}$ in $\textrm{Seq}(\mathcal{V})$ the set ${}_{\textbf{j}} R(\mathcal{V})_{\textbf{i}} $ of \emph{braid-like Khovanov-Lauda diagrams} with source $\textbf{i}$ and target $\textbf{j}$, given by string diagrams satisfying the following conditions:
\begin{itemize} 
\item[-] the strands are labelled by vertices of $\Gamma$, and reading the labels on the bottom (resp. the top) of the diagram gives the sequence $\textbf{i}$ (resp. $\textbf{j}$);
\item[-] a strand does not intersect with itself.
\end{itemize}
For any $\textbf{i}$ and $\textbf{j}$ in $\textrm{Seq}(\mathcal{V})$, the set ${}_{\textbf{j}} R(\mathcal{V})_{\textbf{i}} $ is a $\K$-vector space. Following \cite{KL1}, we have that $R (\mathcal{V} ) = \bigoplus_{\textbf{i}, \textbf{j} \in \text{Seq}(\mathcal{V})} {}_{\textbf{j}} R(\mathcal{V})_{\textbf{i}}$.
Let us consider the linear~$(2,2)$-cateory $\catklr$ defined by:
\begin{enumerate}[{\bf i)}]
\item only one $0$-cell denoted by $\ast$,
\item its generating one cells are the elements of $I$, and the $\star_0$ composition of $1$-cells is formal concatenation of vertices, so that the $1$-cells of $\catklr$ correspond to sequences of vertices of $I$.
\item its generating $2$-cells are given by
\begin{equation}
\label{E:GeneratorsKLR}
\raisebox{-6mm}{$\scalebox{0.8}{\dcrossnodot{i}{j}}$}: i \star_0 j \fl j \star_0 i, \quad \quad \scalebox{0.9}{$\dottsl{i}$} : i \fl i 
\end{equation}
for any $i$ and $j$ in $I$, so that the $2$-cells of $\catklr$ are obtained by all the diagrams one can form by vertical and horizontal compositions of these generating $2$-cells. We require that the $2$-cells of $\catklr$ are subject to relations (\ref{e:klr1}), (\ref{klr2}), (\ref{ybg_simplylaced}) and (\ref{ybg_simplylaced2}).
\end{enumerate}

Note that it is clear from the definition of $\catklr$ that if $\textbf{i}$ and $\textbf{j}$ are sequences of vertices of $I$ which does not belong to the same set $\text{Seq}(\mathcal{V})$, then we have $\catklr_2 (\textbf{i}, \textbf{j}) = \emptyset$. When they belong to the same $\text{Seq}(\mathcal{V})$, we have
$ \catklr_2 (\textbf{i}, \textbf{j}) = {}_{\textbf{j}} R(\mathcal{V})_{\textbf{i}} $. As a consequence, we have an isomorphism of algebras \[ R(\mathcal{V}) \simeq \coprod\limits_{\textbf{i}, \textbf{j} \in \text{Seq}(\mathcal{V})} \catklr_2 (\textbf{i}, \textbf{j}) \]
so that for any $\mathcal{V}$ in $\N [I]$, the KLR algebra $\catklr$ is encoded in the linear~$(2,2)$-category $\catklr$.

\subsection{The linear~$(3,2)$-polygraph $\text{KLR}$}
\label{SS:LinearPolKLR}
In this section, we will define linear~$(3,2)$-polygraphs presenting these simply-laced KLR algebras and prove that they are convergent.

\begin{definition}
 Let $\text{KLR}$ be the linear~$(3,2)$-polygraph defined by:
   \begin{itemize}
   \item[-] One 0-cell denoted by $\ast$,
   \item[-] Its generating $1$-cells are the elements $i$ of $I$,
   \item[-] Its generating $2$-cells are given by the elements of (\ref{E:GeneratorsKLR}),
   \item[-] Its generating $3$-cells are given by the following oriented relations:
   \begin{enumerate}[{\bf i)}]
\item For any $i,j \in I$, \begin{center} $\scalebox{0.9}{\xymatrix{\raisebox{-7mm}{$\dcrossul{i}{j}$} \ar@3[r] ^-{\alpha_{i,j}^L} & \raisebox{-7mm}{$\dcrossdr{i}{j}$} }} \qquad \text{and} \qquad   \scalebox{0.9}{\xymatrix{\raisebox{-7mm}{$\dcrossur{i}{j}$} \ar@3[r] ^-{\alpha_{i,j}^R} & \raisebox{-7mm}{$\dcrossdl{i}{j}$} }} $ \end{center}
\item For any $i \in I$, \begin{center} $\scalebox{0.9}{\xymatrix{\raisebox{-7mm}{$\dcrossul{i}{i}$} \ar@3[r] ^-{\alpha_{i}^L} & \raisebox{-7mm}{$\dcrossdr{i}{i}$}  + \raisebox{-7mm}{$\did{i}{i}$}}} \qquad \text{and} \qquad   \scalebox{0.9}{\xymatrix{\raisebox{-7mm}{$\dcrossur{i}{i}$} \ar@3[r] ^-{\alpha_{i}^R} & \raisebox{-7mm}{$\dcrossdl{i}{i}$} - \raisebox{-7mm}{$\did{i}{i}$} }} $ \end{center}
\item For any $i \in I$, \begin{center} $\scalebox{0.8}{\xymatrix{\raisebox{-6mm}{$\tcross{i}{i}$} \ar@3[r] ^-{\beta_i} & 0 }} $   \end{center}
\item For any $i,j \in I$ such that $i \cdot j=0$, \begin{center} $\scalebox{0.8}{\xymatrix{\raisebox{-7mm}{$\tcross{i}{j}$} \ar@3[r] ^-{\beta_{i,j}} & \raisebox{-7mm}{$\did{i}{j}$} }} $    \end{center}
\item For any $i,j \in I$ such that $i \cdot j=-1$, \begin{center} $\scalebox{0.8}{\xymatrix{\raisebox{-7mm}{$\tcross{i}{j}$} \ar@3[r] ^-{\beta_{i,j}} & \raisebox{-7mm}{$\didl{i}{j}$} + \raisebox{-7mm}{$\didr{i}{j}$} }} $   \end{center}
\item For any $i,j,k \in I$, and unless $i=k$ and $i \cdot j \ne -1$, \begin{center} $\scalebox{0.8}{\xymatrix{ \raisebox{-7mm}{$\ybg{i}{j}{k}$} \ar@3[r] ^-{\gamma_{i,j,k}} & \raisebox{-7mm}{$\ybd{i}{j}{k}$}}} $ \end{center}
\item For any $i,j \in I$ such that $i \cdot j = -1$,   \begin{center} $\scalebox{0.8}{\xymatrix{ \raisebox{-7mm}{$\ybg{i}{j}{i}$} \ar@3[r] ^-{\gamma_{i,j,k}} & \raisebox{-7mm}{$\ybd{i}{j}{i}$} + \raisebox{-7mm}{$\tid{i}{j}{i}$} }} \quad . $ \end{center}
   \end{enumerate}
   \end{itemize}
\end{definition}   

We then establish the first main result of this paper:
\begin{theorem}
\label{T:ConvergentPresKLR}
The linear~$(3,2)$-polygraph $\text{KLR}$ is a convergent presentation of the linear~$(2,2)$-category $\catklr$.
\end{theorem}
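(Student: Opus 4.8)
The plan is to show that $\text{KLR}$ presents $\catklr$ and is convergent, treating these as two separate tasks. For the presentation claim, I would observe that the underlying $0$-, $1$-, and $2$-cells of $\text{KLR}$ coincide with those of $\catklr$ by construction, and that the $3$-cells are exactly the oriented versions of the defining relations (\ref{e:klr1}), (\ref{klr2}), (\ref{ybg_simplylaced}) and (\ref{ybg_simplylaced2}). Since every generating $3$-cell is $2$-invertible in $\text{KLR}_3^\ell$ (recall from \cite[Proposition 1.2.3]{GHM17} that each $3$-cell $\alpha$ has inverse $1_{s_2(\alpha)} - \alpha + 1_{t_2(\alpha)}$), the congruence $\equiv$ generated by $\text{KLR}$ is precisely the two-sided ideal of relations defining $\catklr$, so $\text{KLR}_2^\ell / \equiv \;\simeq\; \catklr$ as linear~$(2,2)$-categories. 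This part is essentially bookkeeping.

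The substance is convergence, which by Newman's lemma reduces to termination plus local confluence, and for a left-monomial terminating linear~$(3,2)$-polygraph local confluence reduces by Alleaume's criterion \cite{AL16} to confluence of critical branchings. First I would prove termination by exhibiting a well-founded termination order on monomials. The natural invariant is a lexicographic combination of: (1) the number of crossings $\|\cdot\|$ involving equal labels or labels $i,j$ with $i\cdot j \ne 0$ (the rules $\beta_i$, $\beta_{i,j}$ strictly decrease crossings, and $\beta_i$ sends to $0$); (2) for the double-crossing rules $\alpha^{L}_{i,j}$, $\alpha^{R}_{i,j}$, $\alpha^L_i$, $\alpha^R_i$, a measure counting how far dots sit below crossings, since these rules push dots downward through crossings; and (3) for the Yang–Baxter rules $\gamma_{i,j,k}$, a measure on the relative position of the three strands ensuring each braid move decreases it. One must check all right-hand sides of each rule, including the lower-order terms produced by $\alpha^L_i$, $\alpha^R_i$ and $\gamma_{i,j,i}$, are strictly smaller; the term $\tid{i}{j}{i}$ appearing in $\gamma_{i,j,i}$ has no crossings at all, hence is harmless, and the identity terms in $\alpha^L_i,\alpha^R_i$ likewise drop crossings.

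Then I would enumerate the critical branchings, which arise from overlaps between pairs of left-hand sides sharing a common subdiagram. The families to inspect are: two double-crossing rules overlapping on a shared crossing; a double-crossing rule overlapping a $\beta$-rule; a $\beta$-rule or double-crossing rule overlapping a Yang–Baxter rule $\gamma$; two Yang–Baxter rules overlapping (the $\ybg$/$\ybd$ configurations sharing a crossing); and overlaps of any crossing rule with a dot-slide arising through the structural exchange relations (\ref{E:ExchangeRel}). For each, I would rewrite both ways to a common normal form, the classical KLR consistency checks (the Reidemeister-III/Yang–Baxter coherence, the far-commutativity, and the dot-through-crossing relations). The main obstacle I expect is the confluence of the critical branchings involving $\gamma_{i,j,i}$ when $i \cdot j = -1$, since here the two sides differ by the extra identity term $\tid{i}{j}{i}$ and one must carefully track how this correction term interacts with subsequent crossing and dot reductions; this is precisely the computation encoding the quantum Serre relation and is the place where the simply-laced hypothesis genuinely simplifies matters (see Remark \ref{R:SimplyLaced}). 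Once all critical branchings are verified confluent, termination and local confluence give convergence, completing the proof.
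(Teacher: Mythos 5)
Your overall architecture --- presentation is bookkeeping, convergence equals termination plus confluence of critical branchings via Newman's lemma and Alleaume's criterion --- is exactly the paper's, and your presentation half is fine. The genuine gap is in termination. You delegate the Yang--Baxter rules to ``a measure on the relative position of the three strands ensuring each braid move decreases it,'' but this measure is never constructed, and constructing it is precisely the hard part. In polygraphic rewriting a termination order must be stable under every $2$-categorical context $m_1 \star_1 (m_2 \star_0 u \star_0 m_3) \star_1 m_4$ and, in the linear setting, must dominate every monomial in the support of every target; crossing counts and dot-over-crossing counts pass this test because contexts contribute additively, but ``relative position of strands'' is exactly the kind of invariant that the exchange relation (\ref{E:ExchangeRel}) destroys, since diagrams in a $2$-category have no canonical positions. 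This is why termination of the $3$-polygraph of permutations (the unlabeled skeleton of your rule $\gamma_{i,j,k}$) is itself a nontrivial theorem of \cite{GM09}, and why the paper proves termination in \ref{SSS:TerminationKLR} by the derivation method: a $2$-functor $X$ with $X(\text{crossing})(i,j)=(j+1,i)$ and $X(\text{dot})(i)=i+1$, together with a derivation $d$ satisfying $d(\text{crossing})(i,j)=i$, whose functoriality yields context-stability automatically and which handles all three families of rules at once, with no lexicographic layering. Until your measure (3) is defined and proved contextual, your termination argument does not go through.

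The confluence half has a second, independent omission: indexed critical branchings. In diagrammatic rewriting, two left-hand sides can overlap in a minimal way that leaves a hole through which an arbitrary $2$-cell passes; one then obtains a critical branching for \emph{each normal form} that can be inserted into the hole, hence an infinite family. This happens here for the double Yang--Baxter overlap (\ref{indexyb}): the normal forms that can be plugged in are strands carrying $n$ dots and dotted crossings, for all $n \in \N$, and the paper must first argue (by pushing dots to the bottom of diagrams) that these are the only shapes occurring, and then prove confluence of every instance by an induction constructing the $3$-cells $\gamma_n$ (see \ref{SSS:CriticalBranchingsKLRAlgebra} and family G of Appendix \ref{A:KLRCriticalBranchings}). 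Your enumeration lists only the finitely many regular overlap families, so the critical branching lemma of \cite{AL16} cannot be invoked and local confluence is not established. Your instinct that the branchings involving $\gamma_{i,j,i}$ with $i \cdot j = -1$ are the delicate ones is correct as far as the regular families go, but the genuinely new work in this proof --- and the part your plan would fail to cover --- is the indexed families.
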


The $3$-cells of the linear~$(3,2)$-polygraph $\text{KLR}$ correspond to an orientation of the relations of $\catklr$. As a consequence, it is a presentation of the linear~$(2,2)$-category $\catklr$. On the one hand, we show that $\text{KLR}$ is terminating using the derivation method given in \cite[Theorem 4.2.1]{GM09} to prove termination of $3$-polygraphs, extended in the linear setting in \cite{DUP19}. On the other hand, we prove that $\text{KLR}$ is confluent by proving confluence of all its critical branchings, using \cite[Theorem 4.2.13]{AL16}.  

\subsubsection{Termination of \text{KLR}} 
\label{SSS:TerminationKLR}
Proving termination by derivation consists in the construction of $2$-functors  $X: \catklr \fl \catego{Ord}$ and $Y: (\catklr)^{\text{op}} \fl \catego{Ord}$, where $\catego{Ord}$ is the category of partially ordered sets and monotone maps, viewed as a $2$-category with one $0$-cell, ordered sets as $1$-cells and monotone maps as $2$-cells and $(\catklr)^{\text{op}}$ is the $2$-category $\catklr$ in which all sources and targets of $2$-cells have been exchanged, and a derivation $d: \catklr \fl  M_{X,Y,G}(C)$ with values in the module $M_{X,Y,G}$ on $\catklr$ defined from $X$, $Y$ and $G$ an internal abelian group in $\catego{Ord}$ defined as in \cite{GM09}. This data is required to satisfy the following conditions to ensure termination of the linear~$(3,2)$-polygraph $\text{KLR}$:
\begin{enumerate}[{\bf i)}]
\item For any $1$-cell $a$ in $\text{KLR}_1$, the sets~$X(a)$ and $Y(a)$ are non-empty and, for any generating $3$-cell $\alpha$ in $\text{KLR}_3$, the inequalities $X(s(\alpha))\geq X(h)$ and $Y(s(\alpha))\geq Y(h)$ hold for any $h$ in $\text{Supp}(t(\alpha))$.
\item The addition in $G$ is strictly monotone in both arguments and every decreasing sequence of non-negative elements of $G$ is stationary.
\item For any monomial $f$ in $\text{KLR}_2^\ell$, we have $d(f) \geq 0$ and, for every $3$-cell $\alpha$ in $\text{KLR}_3$, the strict inequality $d(s(\alpha))> d(h)$ holds for any $h$ in $\text{Supp}(t(\alpha))$.
\end{enumerate}

We consider the internal abelian group $\mathbb{Z}$ in $\catego{Ord}$ and we set $Y$ to be the trivial $2$-functor. We define the values of the $2$-functor $X: \textrm{KLR}_2^* \to \textbf{Ord}$ on generating $2$-cells by: 
\[ X \big( \ident{} \big) (i)=i \hspace{1cm} X \big( \identdot{} \big) (i)= i+1 \hspace{1cm} X \big( \raisebox{-7mm}{$\scalebox{0.9}{\crossing{}{}}$} \big) (i,j) = (j+1,i ) \quad \forall i,j \in \N \]
for any labels of the strands of the diagrams, and we consider the $\text{KLR}_2^\ast$-module $M_{X,*,\Z}$. The following inequalities hold
\begin{align*}
X \big( \raisebox{-6mm}{$\scalebox{0.9}{\tcross{}{}}$} \big) (i,j) = (i+1,j+1) \geq & \textrm{max}\big( X \big( \raisebox{-6mm}{$\scalebox{0.9}{\didl{}{}}$} \big)(i,j), X \big( \raisebox{-6mm}{$\scalebox{0.9}{\didr{}{}}$} \big) (i,j), X \big( \raisebox{-5mm}{$\scalebox{0.9}{\did{}{}}$} \big) (i,j) \big) \\
& =\textrm{max} \big( (i+1,j) , (i,j+1) , (i,j) \big),
\end{align*}
\[  X \big( \raisebox{-6mm}{$\scalebox{0.9}{\dcrossul{}{}}$} \big) = (j+2,i) \geq (j+2,i) =\textrm{max} \big( X \big( \raisebox{-6mm}{$\scalebox{0.9}{\dcrossdr{}{}}$} \big) (i,j), X \big( \raisebox{-6mm}{\scalebox{0.9}{\did{}{}}} \big)(i,j) \big), \]
\[ X \big( \raisebox{-6mm}{$\scalebox{0.9}{\dcrossur{}{}}$} \big) (i,j) = (j+1,i+1) \geq (j+1,i+1)= \textrm{max} \big( X \big(   \raisebox{-6mm}{$\scalebox{0.9}{\dcrossdr{}{}}$} \big)(i,j), X \big( \raisebox{-5mm}{$\scalebox{0.7}{\did{}{}}$  } \big) (i,j) \big), \]
\[ X \big( \raisebox{-7mm}{$\scalebox{0.7}{\ybg{}{}{}}$} \big) (i,j,k) = (k+2,j+1,i) \geq \textrm{max}\big( X \big( \raisebox{-5mm}{$\scalebox{0.8}{\ybd{}{}{} }$} \big) (i,j,k), X \big( \raisebox{-5mm}{$\scalebox{0.8}{\tid{}{}{}}$} \big) (i,j,k) \big). \]

Let us now define the derivation $d$ of $\text{KLR}_2^*$ into $M_{X,*,\Z}$ on the generating $2$-cells by \[ d \big( \raisebox{-7mm}{$\scalebox{0.9}{\crossing{}{}}$} \big) (i,j) = i \hspace{1cm} d \big( \ident{} \big) (i) = 0 = d \big( \identdot{} \big)(i). \]
The following inequalities hold:
\[ d \big( \raisebox{-7mm}{$\scalebox{0.9}{\tcross{}{}}$} \big) (i,j) = i+j+1 > 0 = d \big( \raisebox{-7mm}{$\scalebox{0.9}{\did{}{}}$} \big) (i,j) = \textrm{max} \big( d \big( \raisebox{-7mm}{$\scalebox{0.9}{\didl{}{}}$} \big) , d \big( \raisebox{-7mm}{$\scalebox{0.9}{ \didr{}{} }$} \big)  \big)(i,j), \]
\[  d \big( \raisebox{-7mm}{$\scalebox{0.8}{\ybg{}{}{}}$} \big) (i,j,k) = 2i+j+1 > 2i+j = \textrm{max}\big( d \big( \raisebox{-5mm}{$\scalebox{0.8}{\ybd{}{}{}}$} \big) , d \big( \raisebox{-5mm}{$\scalebox{0.8}{\tid{}{}{}}$}  \big) \big) (i,j,k), \]
\[ d \big(  \raisebox{-7mm}{$\scalebox{0.9}{\dcrossul{}{} }$} \big) (i,j) = i > 0 = \textrm{max} \big( d \big( \raisebox{-7mm}{$\scalebox{0.9}{\dcrossdr{}{}}$} \big), d \big( \raisebox{-6mm}{$\scalebox{0.9}{\did{}{} }$} \big) \big) (i,j),  \]
\[ d \big( \raisebox{-7mm}{$\scalebox{0.9}{\dcrossur{}{}}$} \big) (i,j) =  i > 0 = \textrm{max} \big( d \big( \raisebox{-7mm}{$\scalebox{0.9}{\dcrossul{}{} }$} \big) , d \big( \raisebox{-6mm}{$\scalebox{0.9}{ \did{}{} }$} \big) \big) (i,j).  \]

so that the $2$-functor $X$ and the derivation $d$ satisfy the conditions ${\bf i)}$, ${\bf ii)}$ and ${\bf iii)}$, and thus the linear~$(3,2)$-polygraph $\text{KLR}$ is terminating.

\subsubsection{Critical branchings of \text{KLR}}
\label{SSS:CriticalBranchingsKLRAlgebra}
There are four different forms for the sources of $3$-cells, that we denote as follows:
\[ \raisebox{-7mm}{$\scalebox{0.9}{\dcrossul{i}{j}}$} \leftrightsquigarrow \text{ldot}_{i,j} \;, \quad \raisebox{-7mm}{$\scalebox{0.9}{\dcrossur{i}{j}}$} \leftrightsquigarrow \text{rdot}_{i,j} \; , \quad
 \raisebox{-7mm}{$\scalebox{0.8}{\tcross{i}{j}}$} \leftrightsquigarrow \text{dcr}_{i,j} \;, \quad \raisebox{-6mm}{$\scalebox{0.7}{\ybg{i}{j}{k}}$} \leftrightsquigarrow \text{ybg}_{i,j,k} \; . \]

There are six families of regular critical branchings, which we all prove confluent confluent in Appendix \ref{A:KLRCriticalBranchings}. The exhautive list of critical branchings is given below, listing all the pairs of sources of $3$-cells that overlap:

\begin{enumerate}[{\bf a)}]
\item Crossings with two dots of the form (ldot$_{i,j}$, rdot$_{i,j}$) for any $i$ and $j$ in $I$.
\item Triple crossings of the form (dcr$_{j,i}$, dcr$_{i,j}$) for any $i,j$ in $I$ and any value of the bilinear form $i \cdot j$.
\item Double crossings with dots of the form
(ldot$_{j,i}$, dcr$_{i,j}$) and (rdot$_{j,i}$, dcr$_{i,j}$) for any $i$ and $j$ in $I$ and any value of $i \cdot j$.
\item Double Yang-Baxters of the form (ybg$_{j,k,i}$, ybg$_{i,j,k}$) for any $i,j$ and $k$ in $I$ and any values of $i \cdot j$, $j \cdot k$ and $i \cdot k$.
\item Yang-Baxters and crossings of the form 
(ybg$_{i,j,k}$, dcr$_{j,i}$) and (dcr$_{k,j}$, ybg$_{i,j,k}$)  for any $i,j$ and $k$ in $I$ and any values of $i \cdot j$ and $j \cdot k$.
\item Yang Baxter and dots of the form
(ldot$_{k,j}$, ybg$_{i,j,k}$) ; (rdot$_{k,j}$, ybg$_{i,j,k}$) ; (rdot$_{i,k}$, ybg$_{i,j,k}$) for any $i,j$ and $k$ in $I$ and any values of $i \cdot j$, $i \cdot k$ and $j \cdot k$.
\end{enumerate}

There also are right-indexed critical branchings of the form 
\begin{equation} 
\label{indexyb}
\scalebox{0.792}{$\indexyb{K}$}
\end{equation}

Following the study of the $3$-polygraphs of permutations in \cite[Section 5.4]{GM09}, the $2$-cells $k$ in normal form that can be plugged in (\ref{indexyb}) are identities or simple crossings. If we take into account the additional dot $2$-cell, one can make move using the KLR relations any dot to the bottom of the diagram. For instance, we have the following rewriting sequence in $\text{KLR}^\ast$:
\begin{align*}
\xymatrix{ \scalebox{0.792}{$\exdotsa$} \ar@3[r] &  \scalebox{0.822}{$\exdotsb$} \quad (\pm D_1^{\text{low}}) \ar@3[r] & \dots \ar@3[r] & \scalebox{0.822}{$\exdotsc$} \left( \pm \sum D_i^{\text{low}} \right)}
\end{align*}
where the $D_i^{\text{low}}$ are eventually diagrams with fewer crossings. With this observation, the set of normal forms we can plug in (\ref{indexyb}) is given by the following $2$-cells:
\begin{enumerate}[{\bf i)}]
\item $\identdots{i}{n}$ for every $n \in N$, which is an identity if $n=0$.
\item \raisebox{-6mm}{$\scalebox{1}{\dcrossdldot{i}{l}{n}}$}  \: for all $n \in \N$ and for any $l$ in $I$.
\end{enumerate}
All the right-indexed critical branchings are confluent, and are drawn in Appendix \ref{A:KLRCriticalBranchings}.

\subsection{Poincar\'e-Birkhoff-Witt bases} 
\label{SS:PBW}
From \cite[Proposition 4.2.15]{AL16}, the set of monomials in normal form between two $1$-cells $\textbf{i}$ and $\textbf{j}$ form a basis of the vector space $\jRi$. In \cite{KL1}, Khovanov and Lauda described a linear basis for the vector space $\jRi$, that we recover here using constructive methods from rewriting. If $\textbf{i}$ and $\textbf{j}$ are in $\text{Seq}(\mathcal{V})$ with $m = | \mathcal{V} |$, it is given by braid diagrams corresponding to a choice of minimal representatives for the Coxeter presentation of $\mathcal{S}_m$, with an arbitrary number of dots at the bottom of each strand. This choice of minimal representatives is obtained using the relations (\ref{ybg_simplylaced}) and (\ref{ybg_simplylaced2}).

\subsubsection{Rouquier's PBW property}
In \cite{ROU08}, Rouquier established that these bases are \emph{Poincar\'e-Birkhoff-Witt} (PBW for short) bases. 
For any $n$ in $\N$, denote by $R_n$ the $\K$-algebra  $\mathbb{K}[x_1, \dots, x_n] \otimes (\mathbb{K}^{(I)})^{\otimes n} := (\mathbb{K}^{(I)}[x])^{\otimes n}$, where $\mathbb{K}^{(I)}$ denotes the set of functions $f: I \fl \mathbb{K}$ such that only a finite number of $f(i)$ for $i$ in $I$ are non-zero.
We denote by $1_s$ the idempotent corresponding to the $s$-th factor of $\mathbb{K}^{(I)}$ and we put $1_\mathcal{V}= 1_{i_1} \otimes \dots \otimes 1_{i_m}$ for $\textbf{i}= i_1 \dots i_m \in \text{Seq}(\mathcal{V})$.
There is a morphism of algebras from $R_m$ to $H_\mathcal{V}(Q)$ sending $x_k 1_{\textbf{i}}$ to $x_{k,\textbf{i}}$.
Let $J$ be a set of finite sequences of elements of $\lbrace 1, \dots, m-1 \rbrace$ such that $ \lbrace s_{i_1} \dots s_{i_r} \rbrace _ {(i_1, \dots, i_r) \in J }$ is a set of minimal length representatives of elements of $\mathcal{S}_m$ for its Coxeter presentation. This choice of minimal representatives in the Coxeter presentation of $\mathcal{S}_m$ is here fixed by the orientation of the relations on three strands.

The algebra $H_\mathcal{V}(Q)$ is enriched with a graduation with $1_{\textbf{i}}$ and $x_{k,\textbf{i}}$ in degree $0$ and $\tau_{k,\textbf{i}}$ in degree $1$, giving a filtration $H_\mathcal{V}(Q)= \bigoplus_{i \in \N}{ \mathcal{F}_i}$ where $\mathcal{F}_i$ is the set of elements in degree $i$. We denote by $\textrm{gr} H_\mathcal{V}(Q)$ the graded algebra corresponding to this filtration. The morphism $R_m  \to H_\mathcal{V}(Q)$ extends to a surjective algebra morphism \begin{equation} \label{pbw2} \mathbb{K}^{(I)}[x] \wr \mathcal{NH}_m \to \textrm{gr} H_\mathcal{V}(Q) 
\end{equation} where $\mathcal{NH}_m$ is the nilHecke algebra of degree $m$ and $\wr$ stands for the wreath product.  In \cite[Theorem 3.7]{ROU08}, Rouquier named the Poincar\'e-Birhoff-Witt property the fact that the morphism (\ref{pbw2}) is an isomorphism. He proved that is equivalent to the fact that the set \[ S= \lbrace \tau_{i_1, s_{i_2} \dots s_{i_r} (\textbf{j})} \dots \tau_{i_r, \textbf{j}} x_{1,\textbf{j}}^{a_1}\dots x_{m,\textbf{j}}^{a_m} \rbrace _{(i_1, \dots, i_r ) \in J, (a_1, \dots, a_m ) \in \N^{m} , \textbf{j} \in \text{Seq}(\mathcal{V})} \] is a linear basis of the algebra $H_{\mathcal{V}}(Q)$.

The multiplication by the $x_{k,\textbf{i}}$ to the right corresponds to adding an arbitrary number of dots at the bottom of each strand in the diagrams. The products
\[ \tau_{i_1, s_{i_2} \dots s_{i_r} (\textbf{j})} \dots \tau_{i_r, \textbf{j}} \] are given in that case by the choices of braid diagrams corresponding to minimal elements in the Coxeter presentation of $\mathcal{S}_m$ for the degree lexicographic order induced by
\[ s_{1} > s_2 > \dots > s_{m-1}. \] As a consequence, for this choice the elements of $S$ correspond to a set of normal forms for $\text{KLR}$, proving the following result:

\begin{corollary}
The simply-laced KLR algebra $R(\mathcal{V})$ admit a PBW basis.
\end{corollary}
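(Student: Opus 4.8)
The plan is to reduce the statement to Rouquier's reformulation of the PBW property and then to identify his basis candidate with the normal forms produced by the convergent polygraph $\text{KLR}$. Recall from \cite[Theorem 3.7]{ROU08} that, for the specialized polynomials $Q_{i,j}$ realizing $R(\mathcal{V})$ as $H_{\mathcal{V}}(Q)$, the PBW property (isomorphism (\ref{pbw2})) is \emph{equivalent} to the assertion that the set $S = \lbrace \tau_{i_1, s_{i_2} \dots s_{i_r} (\textbf{j})} \dots \tau_{i_r, \textbf{j}}\, x_{1,\textbf{j}}^{a_1}\dots x_{m,\textbf{j}}^{a_m} \rbrace$ is a linear basis of the algebra. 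So the whole task is to prove that $S$ is a basis, and my strategy is to exhibit it as the set of monomials in normal form for $\text{KLR}$. First I would invoke Theorem \ref{T:ConvergentPresKLR}, which gives convergence of $\text{KLR}$, together with \cite[Proposition 4.2.15]{AL16}: the monomials in normal form with $1$-source $\textbf{i}$ and $1$-target $\textbf{j}$ form a linear basis of $\catklr_2(\textbf{i},\textbf{j}) = \jRi$. Through the isomorphism $R(\mathcal{V}) \simeq \coprod_{\textbf{i},\textbf{j} \in \text{Seq}(\mathcal{V})} \catklr_2(\textbf{i},\textbf{j})$, the collection of all such normal forms is then automatically a basis of $R(\mathcal{V})$.

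The core of the argument is therefore a precise characterization of the normal forms and a matching with $S$. I would proceed in three steps. First, dots: the rules $\alpha^{L}_{i,j}, \alpha^{R}_{i,j}, \alpha^{L}_{i}, \alpha^{R}_{i}$ orient the resolution of a dot sitting atop a crossing downwards, and combined with the freedom to slide dots along strands (an instance of the exchange relations (\ref{E:ExchangeRel})), no monomial in normal form can carry a dot above a crossing. Hence every dot is pushed to the bottom of its strand, producing exactly the factor $x_{1,\textbf{j}}^{a_1}\dots x_{m,\textbf{j}}^{a_m}$ with $(a_1,\dots,a_m)\in\N^m$. Second, the braid part: the rules $\beta_i$ and $\beta_{i,j}$ eliminate any configuration where two strands cross twice, so the underlying permutation diagram of a normal form is a reduced braid word. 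Third, the selection of representatives: among reduced words representing a fixed element of $\mathcal{S}_m$, the Yang--Baxter rules $\gamma_{i,j,k}$ single out a unique one, and this is governed exactly by the analysis of the $3$-polygraph of permutations in \cite[Section 5.4]{GM09}. This last point is what fixes, canonically from the orientation of the relations on three strands, the set $J$ of minimal length coset representatives appearing in $S$.

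Putting these three observations together, the braid factor of a normal form is precisely a product $\tau_{i_1, s_{i_2} \dots s_{i_r} (\textbf{j})} \dots \tau_{i_r, \textbf{j}}$ with $(i_1,\dots,i_r)\in J$, and the dot factor is $x_{1,\textbf{j}}^{a_1}\dots x_{m,\textbf{j}}^{a_m}$, so that the set of monomials in normal form for $\text{KLR}$ coincides with $S$. Combining this identification with the basis statement of the previous paragraph shows that $S$ is a linear basis of $R(\mathcal{V})$, which by \cite[Theorem 3.7]{ROU08} is equivalent to the PBW property, concluding the proof.

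I expect the main obstacle to be the rigorous justification of the dot characterization rather than the braid part: the rules moving dots through crossings carry correction terms (the split-strand summands in $\alpha^{L}_{i}, \alpha^{R}_{i}$), so one must argue that in a fully reduced monomial all dots genuinely accumulate at the bottom and that the surviving correction terms are themselves reduced into monomials already of the form $S$. The compatibility between the orientation chosen for the Yang--Baxter $3$-cells and Rouquier's requirement that $J$ be a set of \emph{minimal length} representatives also needs care, but this is exactly what \cite[Section 5.4]{GM09} supplies, so I would lean on that reference rather than re-deriving it.
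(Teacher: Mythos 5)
Your proposal is correct and takes essentially the same route as the paper: convergence of $\text{KLR}$ together with Alleaume's normal-form basis theorem, Rouquier's equivalence of the PBW property with $S$ being a linear basis, and the identification of $S$ with the monomials in normal form (dots pushed to the bottom of each strand, braid part given by the minimal Coxeter representatives singled out by the orientation of the Yang--Baxter $3$-cells). The paper's own proof is terser but makes exactly these identifications, so nothing further is needed; your worry about correction terms is harmless, since normal forms are characterized purely by the absence of redexes and the correction terms only matter for confluence, which Theorem \ref{T:ConvergentPresKLR} already supplies.
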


\section{Rewriting modulo in Khovanov-Lauda-Rouquier's $2$-category}
\label{S:KLR2Cat}
In this section, we define a linear~$(3,2)$-polygraph presenting Khovanov and Lauda's linear~$(2,2)$-category $\Ug$ and prove that rewriting modulo the isotopy relations using the remaining defining $3$-cells gives a quasi-terminating and confluent modulo linear~$(3,2)$-polygraph. As a consequence, we compute linear bases for the spaces of $2$-cells in $\Ug$ and prove non-degeneracy of Khovanov and Lauda's diagrammatic calculus.

\subsection{The $2$-categories $\Ag$ and $\Ug$}
\label{SS:KLRCategories}
In this subsection, we define the linear~$(2,2)$-categories $\Ag$ and $\Ug$ defined respectively by Khovanov-Lauda and Rouquier and we recall Brundan's isomorphism theorem between these two categories.

\subsubsection{Rouquier's Kac-Moody $2$-category}
Let $\mathbb{K}$ be a field and $(I,\cdot,X,Y)$ be a root datum. The \textit{Kac-Moody $2$-category} $\mathcal{A}(\mathfrak{g})$ defined in \cite{ROU08} is the strict additive $\K$-linear $2$-category whose
\begin{itemize}
\item $0$-cells are given by the elements $\lambda$ in the weight lattice $X$ of the Kac-Moody algebra;
\item generating $1$-cells are given by  
$
E_i \one: \lambda \to \lambda + \alpha_i $ and $
F_i \one : \lambda \to \lambda - \alpha_i;
$
\item generating $2$-morphisms are given by $x_i: E_i \one \to  E_i \one $, $\tau_{i,j} : E_i E_j \one \to E_j E_i \one $, $\eta_i: \one \to F_i E_i \one $ and $ \varepsilon: E_i F_i \one \to \one $ which are represented respectively by the following diagrams:
\[ 
\begin{tikzpicture}[baseline = 0]
	\draw[->,thick] (0.08,-.3) to (0.08,.4);
      \node at (0.08,0.05) {$\bullet$};
   \node at (0.08,-.4) {$\scriptstyle{i}$};
   \node at (0.28,0) {$\scriptstyle{\lambda}$};
\end{tikzpicture} \qquad
\begin{tikzpicture}[baseline = 0]
	\draw[->,thick] (0.28,-.3) to (-0.28,.4);
	\draw[->,thick] (-0.28,-.3) to (0.28,.4);
   \node at (-0.28,-.4) {$\scriptstyle{i}$};
   \node at (0.28,-.4) {$\scriptstyle{j}$};
   \node at (.4,.05) {$\scriptstyle{\lambda}$};
\end{tikzpicture}
\qquad
\cupr{i} \qquad \capr{i}  \: .
\]
\end{itemize} 

These two morphisms are subject to the following relations:
\begin{enumerate}[{\bf i)}]
\item The KLR relations for both upward and downward orientations.
\item Right adjunction relations:
\begin{align}\label{rightadj}
\mathord{
\begin{tikzpicture}[baseline = 0]
  \draw[->,thick,black] (0.3,0) to (0.3,.4);
	\draw[-,thick,black] (0.3,0) to[out=-90, in=0] (0.1,-0.4);
	\draw[-,thick,black] (0.1,-0.4) to[out = 180, in = -90] (-0.1,0);
	\draw[-,thick,black] (-0.1,0) to[out=90, in=0] (-0.3,0.4);
	\draw[-,thick,black] (-0.3,0.4) to[out = 180, in =90] (-0.5,0);
  \draw[-,thick,black] (-0.5,0) to (-0.5,-.4);
   \node at (-0.5,-.5) {$\scriptstyle{i}$};
   \node at (0.5,0) {$\scriptstyle{\lambda}$};
\end{tikzpicture}
}
&=
\mathord{\begin{tikzpicture}[baseline=0]
  \draw[->,thick,black] (0,-0.4) to (0,.4);
   \node at (0,-.5) {$\scriptstyle{i}$};
   \node at (0.2,0) {$\scriptstyle{\lambda}$};
\end{tikzpicture}
},\qquad
\mathord{
\begin{tikzpicture}[baseline = 0]
  \draw[->,thick,black] (0.3,0) to (0.3,-.4);
	\draw[-,thick,black] (0.3,0) to[out=90, in=0] (0.1,0.4);
	\draw[-,thick,black] (0.1,0.4) to[out = 180, in = 90] (-0.1,0);
	\draw[-,thick,black] (-0.1,0) to[out=-90, in=0] (-0.3,-0.4);
	\draw[-,thick,black] (-0.3,-0.4) to[out = 180, in =-90] (-0.5,0);
  \draw[-,thick,black] (-0.5,0) to (-0.5,.4);
   \node at (-0.5,.5) {$\scriptstyle{i}$};
   \node at (0.5,0) {$\scriptstyle{\lambda}$};
\end{tikzpicture}
}
=
\mathord{\begin{tikzpicture}[baseline=0]
  \draw[<-,thick,black] (0,-0.4) to (0,.4);
   \node at (0,.5) {$\scriptstyle{i}$};
   \node at (0.2,0) {$\scriptstyle{\lambda}$};
\end{tikzpicture}
},
\end{align}
which imply that $F_i 1_{\lambda+\alpha_i}$ is
the right dual of $E_i 1_\lambda$.
\item Some inversion relations: we require the following $2$-morphisms to be invertible in $\mathcal{A}(\mathfrak{g})$:
\begin{align}\label{inv1}
\trightcross{i}{j}
&:E_j F_i 1_\lambda \stackrel{\sim}{\rightarrow} F_i E_j 1_\lambda
&\text{if $i \neq j$,}\\
\label{inv2}
\trightcross{i}{j}
\oplus
\bigoplus_{n=0}^{\langle h_i,\lambda\rangle-1}
\mathord{
\begin{tikzpicture}[baseline = 0]
	\draw[<-,thick,black] (0.4,0) to[out=90, in=0] (0.1,0.4);
	\draw[-,thick,black] (0.1,0.4) to[out = 180, in = 90] (-0.2,0);
    \node at (-0.2,-.1) {$\scriptstyle{i}$};
  \node at (0.3,0.5) {$\scriptstyle{\lambda}$};
      \node at (-0.3,0.2) {$\color{black}\scriptstyle{n}$};
      \node at (-0.15,0.2) {$\color{black}\bullet$};
\end{tikzpicture}
}
&:
E_i F_i 1_\lambda\stackrel{\sim}{\rightarrow}
F_i E_i 1_\lambda \oplus 1_\lambda^{\oplus \langle h_i,\lambda\rangle}
&\text{if $\langle h_i,\lambda\rangle \geq
  0$},\\
\trightcross{i}{j}
\oplus
\bigoplus_{n=0}^{-\langle h_i,\lambda\rangle-1}
\mathord{
\begin{tikzpicture}[baseline = 0]
	\draw[<-,thick,black] (0.4,0.2) to[out=-90, in=0] (0.1,-.2);
	\draw[-,thick,black] (0.1,-.2) to[out = 180, in = -90] (-0.2,0.2);
    \node at (-0.2,.3) {$\scriptstyle{i}$};
  \node at (0.3,-0.25) {$\scriptstyle{\lambda}$};
      \node at (0.55,0) {$\color{black}\scriptstyle{n}$};
      \node at (0.38,0) {$\color{black}\bullet$};
\end{tikzpicture}
}
&
:E_i F_i 1_\lambda \oplus 
1_\lambda^{\oplus -\langle h_i,\lambda\rangle}
\stackrel{\sim}{\rightarrow}
 F_i E_i 1_\lambda&\text{if $\langle h_i,\lambda\rangle \leq
  0$}.\label{inv3}
\end{align}
\end{enumerate}

This condition of invertibility in $\Ag$ imposes that we have to defined new generating $2$-cells as the formal inverses of each summand in (\ref{inv1}) -- (\ref{inv3}). Let us denote by $\widehat{\Ag}$ the linear~$(2,2)$-category obtained by forgetting the direct sums operations and the grading on $1$-cells in $\Ag$. In order to compute linear bases of $\Ag$, it is sufficient to compute linear bases in the vector spaces of $2$-cells in $\widehat{\Ag}$. 

\subsubsection{Khovanov-Lauda's $2$-category $\Ug$}
The $2$-category defined by Khovanov and Lauda has the same objects and $1$-morphisms than $\Ag$. There are additional generating $2$-morphisms $x':F_i 1_\lambda \rightarrow F_i 1_\lambda$,
$\tau':F_i F_j 1_\lambda \rightarrow F_j F_i 1_\lambda$,
$\eta':1_\lambda \rightarrow E_i F_i 1_\lambda$ and $\eps':F_i E_i
1_\lambda \rightarrow 1_\lambda$
represented diagrammatically by
\begin{align}\label{solid2}
x'
&= 
\mathord{
\begin{tikzpicture}[baseline = 0]
	\draw[<-,thick,black] (0.08,-.3) to (0.08,.4);
      \node at (0.08,0.1) {$\color{black}\bullet$};
   \node at (0.08,.5) {$\scriptstyle{i}$};
\end{tikzpicture}
}
{\scriptstyle\lambda}\:,
\qquad
\tau'
= 
\mathord{
\begin{tikzpicture}[baseline = 0]
	\draw[<-,thick,black] (0.28,-.3) to (-0.28,.4);
	\draw[<-,thick,black] (-0.28,-.3) to (0.28,.4);
   \node at (-0.28,.5) {$\scriptstyle{j}$};
   \node at (0.28,.5) {$\scriptstyle{i}$};
   \node at (.4,.05) {$\scriptstyle{\lambda}$};
\end{tikzpicture}
}\:,
\qquad
\eta'
= 
\mathord{
\begin{tikzpicture}[baseline = 0]
	\draw[-,thick,black] (0.4,0.3) to[out=-90, in=0] (0.1,-0.1);
	\draw[->,thick,black] (0.1,-0.1) to[out = 180, in = -90] (-0.2,0.3);
    \node at (0.4,.4) {$\scriptstyle{i}$};
  \node at (0.3,-0.15) {$\scriptstyle{\lambda}$};
\end{tikzpicture}
}\:,\qquad
\eps'
= 
\mathord{
\begin{tikzpicture}[baseline = 0]
	\draw[-,thick,black] (0.4,-0.1) to[out=90, in=0] (0.1,0.3);
	\draw[->,thick,black] (0.1,0.3) to[out = 180, in = 90] (-0.2,-0.1);
    \node at (0.4,-.2) {$\scriptstyle{i}$};
  \node at (0.3,0.4) {$\scriptstyle{\lambda}$};
\end{tikzpicture}
}.
\end{align} which satisfy many relations as the KLR relations for both upward and downard orientations and the local "$\msl$" relations which come from Lauda's categorification of $\msl$, \cite{LAU12}. We refer to \cite[Section 3.1]{KL3} to see the whole definition of this $2$-category.

\subsubsection{Brundan's isomorphism theorem}
In \cite[Main Theorem]{BRU15}, Brundan defined a $2$-functor from $\Ag$ to $\Ug$ that he proved to be an isomorphism. As these two categories have the same $0$-cells and $1$-cells, this functor is the identity on $0$-cells and $1$-cells. On $2$-cells, it is the identity on the $4$ generating $2$-cells of $\Ag$, which are also in $\Ug$. It then remains to define new $2$-cells $x',\tau', \eta', \varepsilon'$ in $\Ag$ that will be the images of the additionnal generators in $\Ug$ under the inverse functor. We recall here the definition of these new $2$-cells in $\Ag$ and the relations implied by these definitions. First of all, we define the downward dot and crossing as being the right mates of the upward ones:
\begin{equation*} \label{E:DownwardMates}
x' = 
\mathord{
\begin{tikzpicture}[baseline = 0]
	\draw[<-,thick,black] (0.08,-.3) to (0.08,.4);
      \node at (0.08,0.1) {$\color{black}\bullet$};
     \node at (0.08,.55) {$\scriptstyle{i}$};
\end{tikzpicture}
}
{\scriptstyle\lambda}
:=
\mathord{
\begin{tikzpicture}[baseline = 0]
  \draw[->,thick,black] (0.3,0) to (0.3,-.4);
	\draw[-,thick,black] (0.3,0) to[out=90, in=0] (0.1,0.4);
	\draw[-,thick,black] (0.1,0.4) to[out = 180, in = 90] (-0.1,0);
	\draw[-,thick,black] (-0.1,0) to[out=-90, in=0] (-0.3,-0.4);
	\draw[-,thick,black] (-0.3,-0.4) to[out = 180, in =-90] (-0.5,0);
  \draw[-,thick,black] (-0.5,0) to (-0.5,.4);
   \node at (-0.1,0) {$\color{black}\bullet$};
  \node at (-0.5,.55) {$\scriptstyle{i}$};
  \node at (0.5,0) {$\scriptstyle{\lambda}$};
\end{tikzpicture}
},
 \qquad 
\tau' =
\mathord{
\begin{tikzpicture}[baseline = 0]
	\draw[<-,thick,black] (0.28,-.3) to (-0.28,.4);
	\draw[<-,thick,black] (-0.28,-.3) to (0.28,.4);
   \node at (-0.28,.55) {$\scriptstyle{j}$};
   \node at (0.28,.55) {$\scriptstyle{i}$};
   \node at (.4,.05) {$\scriptstyle{\lambda}$};
\end{tikzpicture}
}:=
\crossdalt{i}{j} \: .
\end{equation*} 

In \cite{BRU15}, Brundan defined an additional generator for the isomorphism $2$-cell: 
\begin{equation} \label{E:RightCrossing}
\sigma =  
\mathord{
\begin{tikzpicture}[baseline = 0]
	\draw[<-,thick,black] (0.28,-.3) to (-0.28,.4);
	\draw[->,thick,black] (-0.28,-.3) to (0.28,.4);
   \node at (-0.28,-.45) {$\scriptstyle{j}$};
   \node at (-0.28,.55) {$\scriptstyle{i}$};
   \node at (.4,.05) {$\scriptstyle{\lambda}$};
\end{tikzpicture}
}
:=
\mathord{
\begin{tikzpicture}[baseline = 0]
	\draw[->,thick,black] (0.3,-.5) to (-0.3,.5);
	\draw[-,thick,black] (-0.2,-.2) to (0.2,.3);
        \draw[-,thick,black] (0.2,.3) to[out=50,in=180] (0.5,.5);
        \draw[->,thick,black] (0.5,.5) to[out=0,in=90] (0.8,-.5);
        \draw[-,thick,black] (-0.2,-.2) to[out=230,in=0] (-0.5,-.5);
        \draw[-,thick,black] (-0.5,-.5) to[out=180,in=-90] (-0.8,.5);
  \node at (-0.8,.7) {$\scriptstyle{i}$};
   \node at (0.28,-.7) {$\scriptstyle{j}$};
   \node at (1.05,.05) {$\scriptstyle{\lambda}$};
\end{tikzpicture}
} .
\end{equation} 

He then defined a leftward crossing as the "formal inverse" of this new generator. Using the cyclicity relations proved by Brundan in \cite[Section 5]{BRU15}, $\Ag$ is a pivotal linear~$(2,2)$-category and thus recall from \cite{CKS00}
that the $2$-cells in $\Ag$ are represented up by isotopy. It is then natural to set
\begin{equation}
\sigma' = 
\mathord{
\begin{tikzpicture}[baseline = 0]
	\draw[->,thick,black] (0.28,-.3) to (-0.28,.4);
	\draw[<-,thick,black] (-0.28,-.3) to (0.28,.4);
   \node at (0.28,-.4) {$\scriptstyle{j}$};
   \node at (0.28,.5) {$\scriptstyle{i}$};
   \node at (.4,.05) {$\scriptstyle{\lambda}$};
\end{tikzpicture}
} = \tleftcross{i}{j} :
F_i E_j 1_\lambda \rightarrow E_j F_i 1_\lambda,
\end{equation}

Let us now define the new generators from \cite{BRU15}. In what follows, we have to be careful about the sign of $\hil$ because these definitions will slightly differ. 
\begin{itemize}
\item First, let assume that $\hil \geq 0$. 
We define the $2$-morphisms $\sigma'$ and $\eta'$
so that \begin{equation}\label{comfort}
- \tleftcross{i}{j} \oplus\cdots\oplus
\mathord{
\begin{tikzpicture}[baseline = 0]
	\draw[-,thick,black] (0.4,0.4) to[out=-90, in=0] (0.1,0);
	\draw[->,thick,black] (0.1,0) to[out = 180, in = -90] (-0.2,0.4);
    \node at (0.4,.5) {$\scriptstyle{i}$};
  \node at (0.3,-0.05) {$\scriptstyle{\lambda}$};
\end{tikzpicture}
}
:=
{\bigg(}
\trightcross{i}{i}
\oplus\cdots\oplus
\mathord{
\begin{tikzpicture}[baseline = 0]
	\draw[<-,thick,black] (0.4,0) to[out=90, in=0] (0.1,0.4);
	\draw[-,thick,black] (0.1,0.4) to[out = 180, in = 90] (-0.2,0);
    \node at (-0.2,-.1) {$\scriptstyle{i}$};
  \node at (0.3,0.5) {$\scriptstyle{\lambda}$};
      \node at (-0.8,0.38) {$\color{black}\scriptstyle{\langle h_i,\lambda\rangle}-1$};
      \node at (-0.15,0.2) {$\color{black}\bullet$};
\end{tikzpicture}
}
\bigg)^{-1},
\end{equation}
assuming that $\sigma'$ is just the inverse of $\sigma$ if $\hil = 0$. We also define $$ \capl{i} \: := \: - \tfishulpf{i}{ \hil}. $$

\item Now, let assume that $\hil \leq 0$. We define $\sigma'$ and $\eps'$
so that \begin{equation}\label{comfort2}
- \tleftcross{i}{j} \oplus\cdots\oplus
\:\mathord{
\begin{tikzpicture}[baseline = 0]
	\draw[-,thick,black] (0.4,0) to[out=90, in=0] (0.1,0.4);
	\draw[->,thick,black] (0.1,0.4) to[out = 180, in = 90] (-0.2,0);
    \node at (0.4,-.1) {$\scriptstyle{i}$};
  \node at (0.3,0.5) {$\scriptstyle{\lambda}$};
\end{tikzpicture}
}
:=
\bigg(
\trightcross{i}{i}
\oplus\cdots\oplus
\mathord{
\begin{tikzpicture}[baseline = 0]
	\draw[<-,thick,black] (0.4,0.2) to[out=-90, in=0] (0.1,-.2);
	\draw[-,thick,black] (0.1,-.2) to[out = 180, in = -90] (-0.2,0.2);
    \node at (-0.2,.3) {$\scriptstyle{i}$};
  \node at (0.3,-0.25) {$\scriptstyle{\lambda}$};
      \node at (1.15,0.02) {$\color{black}\scriptstyle{-\langle h_i,\lambda\rangle}-1$};
      \node at (0.38,0) {$\color{black}\bullet$};
\end{tikzpicture}
}
\bigg)^{-1},
\end{equation}
assuming again that $\sigma'$ is just the inverse of $\sigma$ if $\hil = 0$. We set $$ \raisebox{-2mm}{$\cupl{i}$} \: := \: \tfishdlpf{i}{- \hil} \: . $$
\end{itemize}

Using these definitions, Brundan also proved that $F_i 1_{\lambda + \alpha_i}$ also is the right dual of $E_1 \one$, yielding adjunction relations of the form \begin{equation}\label{adjfinal}
\mathord{
\begin{tikzpicture}[baseline = 0]
  \draw[-,thick,black] (0.3,0) to (0.3,-.4);
	\draw[-,thick,black] (0.3,0) to[out=90, in=0] (0.1,0.4);
	\draw[-,thick,black] (0.1,0.4) to[out = 180, in = 90] (-0.1,0);
	\draw[-,thick,black] (-0.1,0) to[out=-90, in=0] (-0.3,-0.4);
	\draw[-,thick,black] (-0.3,-0.4) to[out = 180, in =-90] (-0.5,0);
  \draw[->,thick,black] (-0.5,0) to (-0.5,.4);
   \node at (0.3,-.5) {$\scriptstyle{i}$};
   \node at (0.5,0) {$\scriptstyle{\lambda}$};
\end{tikzpicture}
}
=
\mathord{\begin{tikzpicture}[baseline=0]
  \draw[->,thick,black] (0,-0.4) to (0,.4);
   \node at (0,-.5) {$\scriptstyle{i}$};
   \node at (0.2,0) {$\scriptstyle{\lambda}$};
\end{tikzpicture}
},\qquad
\mathord{
\begin{tikzpicture}[baseline = 0]
  \draw[-,thick,black] (0.3,0) to (0.3,.4);
	\draw[-,thick,black] (0.3,0) to[out=-90, in=0] (0.1,-0.4);
	\draw[-,thick,black] (0.1,-0.4) to[out = 180, in = -90] (-0.1,0);
	\draw[-,thick,black] (-0.1,0) to[out=90, in=0] (-0.3,0.4);
	\draw[-,thick,black] (-0.3,0.4) to[out = 180, in =90] (-0.5,0);
  \draw[->,thick,black] (-0.5,0) to (-0.5,-.4);
   \node at (0.3,.5) {$\scriptstyle{i}$};
   \node at (0.5,0) {$\scriptstyle{\lambda}$};
\end{tikzpicture}
}
=
\mathord{\begin{tikzpicture}[baseline=0]
  \draw[<-,thick,black] (0,-0.4) to (0,.4);
   \node at (0,.5) {$\scriptstyle{i}$};
   \node at (0.2,0) {$\scriptstyle{\lambda}$};
\end{tikzpicture}
}.
\end{equation} where the $2$-cells $\eta'$ and $\varepsilon'$ are units and counits of this left adjunction $F_i 1_{\lambda + \alpha_i} \vdash E_i \one$. As a consequence, this yields some cyclicity relations: 
\begin{align*}
\cuprdl{i}{} = \cuprdr{i}{}, \qquad \caprdl{i}{} = \caprdr{i}{}, \qquad 
\cupldl{i}{}  = \cuprdr{i}{}, \qquad \capldl{i}{} = \capldr{i}{}. 
\end{align*}

\subsubsection{$\mathbb{Z}$-grading}
Following the definitions of Rouquier and Khovanov-Lauda, we define a $\Z$-grading on the $2$-morphisms in $\Ag$, by setting for all $i \in I$:
\[
\textrm{deg}(x_i)  = i \cdot i, \;
\textrm{deg}(\tau_i)  = - i \cdot j, \;
\textrm{deg}(\varepsilon_i)  = \frac{i \cdot i}{2} (1 - \hil), \;
\textrm{deg}(\eta_i)  = \frac{i \cdot i}{2} ( 1 + \hil ). 
\]
With the previous definitions of $x'_i, \tau'_i, \eta'_i$ and $\varepsilon'_i$, we can prove that 
\[
\textrm{deg}(x'_i) = i \cdot i, \;
\textrm{deg}(\tau'_i)  = - i \cdot j, \;
\textrm{deg}(\varepsilon'_i)  = \frac{i \cdot i}{2} (1 - \hil), \;
\textrm{deg}(\eta'_i)  = \frac{i \cdot i}{2} ( 1 + \hil ). 
\]
and that 
\[
\textrm{deg}(\sigma_{i,i}) = 0, \; \;
\textrm{deg}(\sigma'_{i,i}) = 0 \]  for all values of $\hil$, so that this grading exactly to the $\Z$-grading in $\Ug$ defined by Khovanov and Lauda. To compute the degree of a diagrammatic $2$-morphism, it suffices to sum up all the degrees of the generating $2$-morphisms that appear in that diagram. For convenience, we also set $\textrm{deg}(0) = - \infty$.

\subsubsection{Bubbles}
For each $\lambda \in P$, we can define $2$-cells in $\text{END}(1_{1_\lambda})$ by putting a cap over a cup whenever the directions and labels are compatible. Thus, there is two kinds of bubble morphisms, namely clockwise bubbles and counter clockwise bubbles, and we can decorate them by placing an arbitrary number of dots on each:
 $$ \posbubd{i}{n} \hspace{2cm} \negbubd{i}{n}.$$
 
If we compute the degree of such a bubble, we have:
$$ \textrm{deg} \left( \posbubd{i}{n} \right) = i \cdot i ( 1 - \hil + n ) \qquad ; \qquad \textrm{deg} \left( \negbubd{i}{n} \right) = i \cdot i ( 1 + \hil + n ). $$

We impose an additive condition on these bubbles, coming from \cite{LAU12,KL3}: that is bubbles with a negative degree are zero, and that bubbles of degree zero are identities. This corresponds to the following equalities: 
\begin{equation} \label{posbub}
\posbubd{i}{n} =  \left\{
    \begin{array}{ll}
        1_{1_\lambda} & \mbox{if } n = \hil - 1 \\
        0 & \mbox{if } n < \hil - 1 
    \end{array}
\right. 
\end{equation}
\begin{equation} \label{negbub}
\negbubd{i}{n} = \left\{
    \begin{array}{ll}
        1_{1_\lambda} & \mbox{if } n = - \hil - 1 \\
        0 & \mbox{if } n < - \hil - 1 
    \end{array}
\right.
\end{equation}

As in \cite[Section 3.6]{LAU12}, we introduce \textit{fake bubbles}. These bubbles are formal symbols which correspond to bubbles decorated with a negative number of dots. It is explained in \cite{LAU12} that these new symbols are added in order to have a better interpretation with only diagrams of the relations obtained by lifting the relations in $\msl$. They are defined in terms of linear combinations of products of positively dotted bubbles. Following \cite{BRU15}, we set:
for $r, s < 0$:
\begin{align*}
\posbubd{i}{n}
&:=
\left\{
\begin{array}{cl}
- \sum\limits_{k \geq 0}{ \raisebox{-2mm}{$\dbbubf{i}{-n-k-1}{k - \hil}$}} & \text{if $n > \hil - 1$},\\
1_{1_\lambda}&\text{if $n=\langle h_i, \lambda\rangle-1$},\\
0&\text{if $n< \langle h_i,\lambda\rangle-1$},
\end{array}\right. \\
\negbubd{i}{n} & :=
\left\{
\begin{array}{cl}
- \sum\limits_{k \geq 0}{ \raisebox{-2mm}{$\dbbubf{i}{\hil + k}{-n-k-1}$}} & \text{if $n > - \hil - 1$},\\\\
1_{1_\lambda}&\text{if $n=-\langle h_i, \lambda\rangle-1$},\\
0&\text{if $n< -\langle h_i,\lambda\rangle-1$}.
\end{array}\right.
\end{align*}

The first condition for both orientations corresponds to Lauda's inductive definition of fake bubbles coming from the infinite Grassmaniann relation, see \cite[Section 3.6.2]{LAU12}. The second two other definitions impose the same condition that fake bubbles of negative degree are zero, and that fake bubbles of degree zero are identities. With this definition, Brundan proved that the Infinite Grassmaniann relation hold in $\Ag$, that is:
\begin{theorem}[\cite{BRU15}, Theorem 3.2] \label{IG}
For $t >0$, the following relation hold in $\Ag$: 
\begin{align*}\label{ig3}
\sum_{\substack{r,s \in \Z \\ r+s=t-2}}
\raisebox{-2mm}{$\mathord{
\begin{tikzpicture}[baseline = 0]
  \draw[<-,thick,black] (0,0.4) to[out=180,in=90] (-.2,0.2);
  \draw[-,thick,black] (0.2,0.2) to[out=90,in=0] (0,.4);
 \draw[-,thick,black] (-.2,0.2) to[out=-90,in=180] (0,0);
  \draw[-,thick,black] (0,0) to[out=0,in=-90] (0.2,0.2);
 \node at (0,-.1) {$\scriptstyle{i}$};
  \node at (-0.2,0.2) {$\color{black}\bullet$};
   \node at (-0.4,0.2) {$\color{black}\scriptstyle{r}$};
\end{tikzpicture}
}
\mathord{
\begin{tikzpicture}[baseline = 0]
  \draw[->,thick,black] (0.2,0.2) to[out=90,in=0] (0,.4);
  \draw[-,thick,black] (0,0.4) to[out=180,in=90] (-.2,0.2);
\draw[-,thick,black] (-.2,0.2) to[out=-90,in=180] (0,0);
  \draw[-,thick,black] (0,0) to[out=0,in=-90] (0.2,0.2);
 \node at (0,-.1) {$\scriptstyle{i}$};
   \node at (-0.4,0.2) {$\scriptstyle{\lambda}$};
   \node at (0.2,0.2) {$\color{black}\bullet$};
   \node at (0.4,0.2) {$\color{black}\scriptstyle{s}$};
\end{tikzpicture}
}$}
&= 0.
\end{align*}
\end{theorem}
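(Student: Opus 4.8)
The statement to prove is Theorem \ref{IG}, the Infinite Grassmannian relation. Since this theorem is attributed to \cite[Theorem 3.2]{BRU15} and the fake bubbles are \emph{defined} precisely in terms of the genuine positively-dotted bubbles via the inductive formulas just above the statement, the plan is to show that the relation is a direct consequence of these definitions. The key observation is that the definition of the fake bubbles is engineered so that the generating function identity holds, and the proof amounts to verifying this at the level of formal power series in an auxiliary variable.

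First I would encode the bubbles into generating functions. The plan is to introduce the clockwise and counterclockwise bubble generating series
\[
\mathcal{B}^{+}(t) = \sum_{n} \posbubd{i}{n} \, t^{-n-1}, \qquad \mathcal{B}^{-}(t) = \sum_{n} \negbubd{i}{n} \, t^{-n-1},
\]
where the sums range over all degrees of decorating dots (both genuine and fake bubbles contributing). The content of Theorem \ref{IG} is exactly the statement that the product $\mathcal{B}^{+}(t)\,\mathcal{B}^{-}(t) = 1$, since the coefficient of $t^{-(t-2)-2}$ in the product collects all pairs $(r,s)$ with $r+s = t-2$, which for $t > 0$ must vanish. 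So the first step is to recast the displayed relation as the vanishing of these off-diagonal coefficients in a product of two generating series.

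Next I would feed in the inductive definitions of the fake bubbles. The definition
\[
\posbubd{i}{n} = - \sum_{k \geq 0} \raisebox{-2mm}{$\dbbubf{i}{-n-k-1}{k - \hil}$}
\]
for $n > \hil - 1$ (and analogously for the counterclockwise orientation), together with the normalization that bubbles of degree $0$ are identities and those of negative degree vanish (relations (\ref{posbub}) and (\ref{negbub})), is designed so that each fake bubble is expressed as a convolution of genuine bubbles. Substituting these into $\mathcal{B}^{+}(t)$ and $\mathcal{B}^{-}(t)$ and carrying out the Cauchy product, one finds that the cross terms telescope: the recursive definition is the unique solution making the product of the two series equal to $1$, which is the standard relation between a power series and its inverse. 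This is a purely formal manipulation, so I would present it as the verification that the defining recursion is equivalent to the generating-function identity rather than grinding through every coefficient.

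The main obstacle I expect is bookkeeping the degree conventions and the ranges of summation correctly across the two orientations, since the genuine bubbles only exist above degree thresholds $\hil - 1$ and $-\hil - 1$ respectively, and the fake bubbles fill in below. In particular one must check that the boundary cases where $r$ or $s$ equals exactly the threshold (giving an identity bubble) combine correctly with the fake-bubble recursion, and that the case distinctions on the sign of $\hil$ do not produce spurious terms. I would handle this by treating the generating series uniformly, noting that the defining relations precisely amount to the statement $\mathcal{B}^{+}(t)\,\mathcal{B}^{-}(t) = 1$ as formal Laurent series, and then reading off the coefficient of degree $t-2$ for $t > 0$, which is the desired vanishing. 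Since this is Brundan's theorem, it suffices to observe that our fake-bubble definitions agree with those in \cite{BRU15} and invoke his result, but the generating-function reformulation makes the logical dependence transparent.
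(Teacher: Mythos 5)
Your proposal has a genuine gap: the infinite Grassmannian relation is \emph{not} a formal consequence of the fake-bubble definitions, and the generating-function manipulation you describe cannot close it. Writing $\mathcal{B}^+(u)=\sum_{a\geq 0} c_{\hil-1+a}\,u^a$ and $\mathcal{B}^-(u)=\sum_{b\geq 0} d_{-\hil-1+b}\,u^b$ (where $c_n$, $d_n$ denote the clockwise and counterclockwise bubbles with $n$ dots), the theorem is indeed equivalent to the identity $\mathcal{B}^+(u)\,\mathcal{B}^-(u)=1$ of formal power series; so far this is only a restatement. But that identity relates \emph{genuine} positively dotted bubbles to one another, not merely fake bubbles to real ones. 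For instance, when $\hil=0$ there are no nontrivial fake bubbles at all (only the degree-zero identities and the vanishing negative-degree ones), yet the relation at $t=1$ asserts that the clockwise bubble with no dots equals minus the counterclockwise bubble with no dots, and at $t=2$ it gives $c_1 + c_0d_0 + d_1 = 0$, a quadratic relation among honest $2$-morphisms of $\Ag$. No definition of auxiliary symbols that do not even appear in these identities can imply them, so the cross terms do not "telescope": the fake-bubble formulas are rigged to be \emph{consistent} with the relation (they determine the fake bubbles from the real ones), but they do not entail it in the degrees where only real bubbles contribute.

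What is actually needed — and what Brundan does to prove \cite[Theorem 3.2]{BRU15} — is to derive these identities from the defining data of $\Ag$: the invertibility of the $2$-morphisms (\ref{inv2})--(\ref{inv3}), combined with the adjunction (\ref{rightadj}) and the KLR relations; composing the inverse with the original morphism in both orders and extracting the matrix entries is what forces the bubble relations, and that computation is the real content of the theorem. Note also that the paper itself offers no proof: Theorem \ref{IG} is imported purely by citation. So your closing fallback — check that the fake-bubble definitions here agree with Brundan's and invoke his theorem — is exactly the argument the paper relies on, and it is the only valid route in your proposal; the generating-function discussion preceding it is a correct reformulation of the statement, not a proof of it.
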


Using the conditions on degrees, we can restrict this relation to the following one:
\begin{equation}
\label{E:InfiniteGrassmannian}
\sum\limits_{k = 0}^{ \alpha} {\raisebox{-2mm}{$\dbbubff{i}{\hil -1 + \alpha - l}{- \hil -1 + l}$}} = 0 \quad \text{for all $\alpha > 0 $.}
\end{equation}

\subsubsection{The relations in $\Ag$}
In this section, we recall some of the important defining relations that arise from the invertibility condition. In \cite{BRU15}, Brundan introduced new generators $\left( \pic{k} \right)_{0 \leq k \leq \hil - 1}$ and $\left( \trefle{k} \right) _{0 \leq k \leq - \hil -1}$ as follows:
\begin{itemize}
\item For $\hil \geq 0$, $\pic{k}$ is the $(n+1)$-th entry of the inverse vector of the invertible $2$-cell when $\hil \geq 0$, that is:  \begin{align}\label{pic}
- \tleftcross{i}{i}
 \oplus\bigoplus_{n=0}^{\langle h_i,\lambda\rangle-1}
\mathord{
\begin{tikzpicture}[baseline = 0]
	\draw[-,thick,black] (0.4,0.4) to[out=-90, in=0] (0.1,0);
	\draw[->,thick,black] (0.1,0) to[out = 180, in = -90] (-0.2,0.4);
    \node at (0.4,.55) {$\scriptstyle{i}$};
  \node at (0.5,0.15) {$\scriptstyle{\lambda}$};
      \node at (0.12,-0.25) {$\color{black}\scriptstyle{n}$};
      \node at (0.12,0.01) {$\color{black}\scriptstyle{\spadesuit}$};
\end{tikzpicture}
}
:=
\bigg(
\trightcross{i}{i}
\oplus
\bigoplus_{n=0}^{\langle h_i,\lambda\rangle-1}
\mathord{
\begin{tikzpicture}[baseline = 0]
	\draw[<-,thick,black] (0.4,0) to[out=90, in=0] (0.1,0.4);
	\draw[-,thick,black] (0.1,0.4) to[out = 180, in = 90] (-0.2,0);
    \node at (-0.2,-.1) {$\scriptstyle{i}$};
  \node at (0.3,0.5) {$\scriptstyle{\lambda}$};
      \node at (-0.35,0.2) {$\color{black}\scriptstyle{n}$};
      \node at (-0.15,0.2) {$\color{black}\bullet$};
\end{tikzpicture}
}
\bigg)^{-1} .
\end{align}

\item Similarly, $\trefle{k}$ is defined for $\hil \leq 0$ by:
\begin{align} -
\tleftcross{i}{j} \oplus\bigoplus_{n=0}^{-\langle h_i,\lambda\rangle-1}
\mathord{
\begin{tikzpicture}[baseline = 0]
	\draw[-,thick,black] (0.4,0) to[out=90, in=0] (0.1,0.4);
	\draw[->,thick,black] (0.1,0.4) to[out = 180, in = 90] (-0.2,0);
    \node at (0.4,-.1) {$\scriptstyle{i}$};
  \node at (0.5,0.45) {$\scriptstyle{\lambda}$};
      \node at (0.12,0.65) {$\color{black}\scriptstyle{n}$};
      \node at (0.12,0.4) {$\color{black}\scriptstyle{\clubsuit}$};
\end{tikzpicture}
}
:=
\bigg(
\trightcross{i}{i}
\oplus
\bigoplus_{n=0}^{-\langle h_i,\lambda\rangle-1}
\mathord{
\begin{tikzpicture}[baseline = 0]
	\draw[<-,thick,black] (0.4,0.4) to[out=-90, in=0] (0.1,0);
	\draw[-,thick,black] (0.1,0) to[out = 180, in = -90] (-0.2,0.4);
    \node at (-0.2,.5) {$\scriptstyle{i}$};
  \node at (0.25,-0.2) {$\scriptstyle{\lambda}$};
      \node at (0.6,0.2) {$\color{black}\scriptstyle{n}$};
      \node at (0.37,0.2) {$\color{black}\bullet$};
\end{tikzpicture}
}
\bigg)^{-1} .
\end{align}
\end{itemize}

To establish the isomorphism between $\Ag$ and $\Ug$, Brundan proved that the following relation have to hold in $\Ag$:
for all $0 \leq n \leq \hil -1$, 
\begin{align}\label{invA}
\mathord{
\begin{tikzpicture}[baseline = 0]
	\draw[-,thick,black] (0.4,0.4) to[out=-90, in=0] (0.1,0);
	\draw[->,thick,black] (0.1,0) to[out = 180, in = -90] (-0.2,0.4);
    \node at (0.4,.55) {$\scriptstyle{i}$};
  \node at (0.65,0.3) {$\scriptstyle{\lambda}$};
      \node at (0.12,-0.25) {$\color{black}\scriptstyle{n}$};
      \node at (0.12,0.01) {$\color{black}\scriptstyle{\spadesuit}$};
\end{tikzpicture}
}
&=
\sum_{r \geq 0}
\mathord{
\begin{tikzpicture}[baseline = 0]
	\draw[-,thick,black] (0.3,0.4) to[out=-90, in=0] (0,0);
	\draw[->,thick,black] (0,0) to[out = 180, in = -90] (-0.3,0.4);
    \node at (0.3,.55) {$\scriptstyle{i}$};
    \node at (0.85,0.5) {$\scriptstyle{\lambda}$};
  \draw[->,thick,black] (0.2,-0.3) to[out=90,in=0] (0,-0.1);
  \draw[-,thick,black] (0,-0.1) to[out=180,in=90] (-.2,-0.3);
\draw[-,thick,black] (-.2,-0.3) to[out=-90,in=180] (0,-0.5);
  \draw[-,thick,black] (0,-0.5) to[out=0,in=-90] (0.2,-0.3);
 \node at (0,-.65) {$\scriptstyle{i}$};
   \node at (0.2,-0.3) {$\color{black}\bullet$};
   \node at (0.9,-0.3) {$\color{black}\scriptstyle{-n-r-2}$};
   \node at (-0.25,0.15) {$\color{black}\bullet$};
   \node at (-0.45,0.15) {$\color{black}\scriptstyle{r}$};
\end{tikzpicture}
}
&&\text{if $0 \leq n < \langle h_i,\lambda\rangle$,}\\
\mathord{
\begin{tikzpicture}[baseline = 0]
	\draw[-,thick,black] (0.4,0) to[out=90, in=0] (0.1,0.4);
	\draw[->,thick,black] (0.1,0.4) to[out = 180, in = 90] (-0.2,0);
    \node at (0.4,-.1) {$\scriptstyle{i}$};
  \node at (0.5,0.45) {$\scriptstyle{\lambda}$};
      \node at (0.12,0.7) {$\color{black}\scriptstyle{n}$};
      \node at (0.12,0.4) {$\color{black}\scriptstyle{\clubsuit}$};
\end{tikzpicture}
}
&=
\sum_{r \geq 0}
\mathord{
\begin{tikzpicture}[baseline=0]
	\draw[-,thick,black] (0.3,-0.5) to[out=90, in=0] (0,-0.1);
	\draw[->,thick,black] (0,-0.1) to[out = 180, in = 90] (-0.3,-.5);
    \node at (0.3,-.6) {$\scriptstyle{i}$};
   \node at (0.25,-0.3) {$\color{black}\bullet$};
   \node at (.4,-.3) {$\color{black}\scriptstyle{r}$};
  \draw[<-,thick,black] (0,0.4) to[out=180,in=90] (-.2,0.2);
  \draw[-,thick,black] (0.2,0.2) to[out=90,in=0] (0,.4);
 \draw[-,thick,black] (-.2,0.2) to[out=-90,in=180] (0,0);
  \draw[-,thick,black] (0,0) to[out=0,in=-90] (0.2,0.2);
 \node at (0,.55) {$\scriptstyle{i}$};
   \node at (0.3,0) {$\scriptstyle{\lambda}$};
   \node at (-0.2,0.2) {$\color{black}\bullet$};
   \node at (-0.9,0.2) {$\color{black}\scriptstyle{-n-r-2}$};
\end{tikzpicture}
}
&&\text{if $0 \leq n < -\langle h_i,\lambda\rangle$.}\label{invB}
\end{align}

As a consequence, we do not have to consider these inverse $2$-cells as generators in the presentation, since we will replace them by their expression in term of the other generators whenever they appear. The invertibility conditions (\ref{inv1}) and (\ref{inv2}) can then be expressed diagrammatically by:
\begin{align}\label{pos}
\tdcrosslr{i}{i} 
&=
\!\!\sum_{n=0}^{\langle h_i, \lambda \rangle-1}
\sum_{r \geq 0}
\mathord{
\begin{tikzpicture}[baseline = 0]
	\draw[-,thick,black] (0.3,0.7) to[out=-90, in=0] (0,0.3);
	\draw[->,thick,black] (0,0.3) to[out = 180, in = -90] (-0.3,0.7);
    \node at (0.3,.85) {$\scriptstyle{i}$};
    \node at (-0.5,0.42) {$\scriptstyle{\lambda}$};
  \draw[->,thick,black] (0.2,0) to[out=90,in=0] (0,0.2);
  \draw[-,thick,black] (0,0.2) to[out=180,in=90] (-.2,0);
\draw[-,thick,black] (-.2,0) to[out=-90,in=180] (0,-0.2);
  \draw[-,thick,black] (0,-0.2) to[out=0,in=-90] (0.2,0);
 \node at (-0.3,0) {$\scriptstyle{i}$};
   \node at (0.2,0) {$\color{black}\bullet$};
   \node at (0.9,0) {$\color{black}\scriptstyle{-n-r-2}$};
   \node at (0.25,0.45) {$\color{black}\bullet$};
   \node at (0.45,0.45) {$\color{black}\scriptstyle{r}$};
	\draw[<-,thick,black] (0.3,-.7) to[out=90, in=0] (0,-0.3);
	\draw[-,thick,black] (0,-0.3) to[out = 180, in = 90] (-0.3,-.7);
    \node at (-0.3,-.85) {$\scriptstyle{i}$};
   \node at (0.25,-0.5) {$\color{black}\bullet$};
   \node at (0.5,-.5) {$\color{black}\scriptstyle{n}$};
\end{tikzpicture}
}
-\mathord{
\begin{tikzpicture}[baseline = 0]
	\draw[<-,thick,black] (0.08,-.3) to (0.08,.4);
	\draw[->,thick,black] (-0.28,-.3) to (-0.28,.4);
   \node at (-0.28,-.4) {$\scriptstyle{i}$};
   \node at (0.08,.5) {$\scriptstyle{i}$};
   \node at (.3,.05) {$\scriptstyle{\lambda}$};
\end{tikzpicture}
},\\\label{neg}
\tdcrossrl{i}{i}
&=
\!\!\sum_{n=0}^{-\langle h_i, \lambda \rangle-1}
\sum_{r \geq 0}
\mathord{
\begin{tikzpicture}[baseline=0]
	\draw[-,thick,black] (0.3,-0.7) to[out=90, in=0] (0,-0.3);
	\draw[->,thick,black] (0,-0.3) to[out = 180, in = 90] (-0.3,-.7);
    \node at (0.3,-.85) {$\scriptstyle{i}$};
   \node at (0.25,-0.5) {$\color{black}\bullet$};
   \node at (.45,-.5) {$\color{black}\scriptstyle{r}$};
  \draw[<-,thick,black] (0,0.2) to[out=180,in=90] (-.2,0);
  \draw[-,thick,black] (0.2,0) to[out=90,in=0] (0,.2);
 \draw[-,thick,black] (-.2,0) to[out=-90,in=180] (0,-0.2);
  \draw[-,thick,black] (0,-0.2) to[out=0,in=-90] (0.2,0);
 \node at (-0.25,0.1) {$\scriptstyle{i}$};
   \node at (-0.4,-0.2) {$\scriptstyle{\lambda}$};
   \node at (0.2,0) {$\color{black}\bullet$};
   \node at (0.9,0) {$\color{black}\scriptstyle{-n-r-2}$};
	\draw[<-,thick,black] (0.3,.7) to[out=-90, in=0] (0,0.3);
	\draw[-,thick,black] (0,0.3) to[out = -180, in = -90] (-0.3,.7);
   \node at (0.27,0.5) {$\color{black}\bullet$};
   \node at (0.5,0.5) {$\color{black}\scriptstyle{n}$};
    \node at (-0.3,.85) {$\scriptstyle{i}$};
\end{tikzpicture}
}
-\mathord{
\begin{tikzpicture}[baseline = 0]
	\draw[->,thick,black] (0.08,-.3) to (0.08,.4);
	\draw[<-,thick,black] (-0.28,-.3) to (-0.28,.4);
   \node at (-0.28,.5) {$\scriptstyle{i}$};
   \node at (0.08,-.4) {$\scriptstyle{i}$};
   \node at (.3,.05) {$\scriptstyle{\lambda}$};
\end{tikzpicture}
}.
\end{align}

Besides, some other relations directly follow from this isomorphism:
\begin{itemize}
\item For $\hil > 0$ and $0 \leq n < \hil$, we have
\begin{equation} \label{lemmepos}
\tfishdr{i}=0, \hspace{2cm} 
\tfishulp{i}{n} =0.
\end{equation}
\item For $\hil < 0$ and $ 0 \leq n < - \hil$, we have
\begin{equation} \label{lemmeneg}
\tfishur{i} =0, \hspace{2cm} 
\tfishdlp{i}{n} =0 .
\end{equation}
\end{itemize}

The following relations also hold, and correspond to the "$\msl$-relations of Khovanov and Lauda's $\Ug$, see \cite[Corollary 3.5]{BRU15}:
\begin{align}\label{everything}
\tfishur{i}
&=
\sum_{n=0}^{\langle h_i, \lambda\rangle}
\!\!\mathord{
\begin{tikzpicture}[baseline=0]
	\draw[<-,thick,black] (0.3,-.45) to[out=90, in=0] (0,-0.05);
	\draw[-,thick,black] (0,-0.05) to[out = 180, in = 90] (-0.3,-.45);
    \node at (-0.3,-.55) {$\scriptstyle{i}$};
   \node at (0.23,-0.25) {$\color{black}\bullet$};
   \node at (0.45,-.25) {$\color{black}\scriptstyle{n}$};
  \draw[-,thick,black] (0,0.45) to[out=180,in=90] (-.2,0.25);
  \draw[->,thick,black] (0.2,0.25) to[out=90,in=0] (0,.45);
 \draw[-,thick,black] (-.2,0.25) to[out=-90,in=180] (0,0.05);
  \draw[-,thick,black] (0,0.05) to[out=0,in=-90] (0.2,0.25);
 \node at (-.25,.45) {$\scriptstyle{i}$};
   \node at (0.45,0.6) {$\scriptstyle{\lambda}$};
   \node at (0.2,0.25) {$\color{black}\bullet$};
   \node at (0.72,0.25) {$\color{black}\scriptstyle{-n-1}$};
\end{tikzpicture}},
&  \tfishdr{i}
&=
-\sum_{n=0}^{-\langle h_i, \lambda\rangle}
\!\!\mathord{
\begin{tikzpicture}[baseline=0]
    \node at (-0.18,-.52) {$\scriptstyle{i}$};
   \node at (0.2,-0.25) {$\color{black}\bullet$};
   \node at (0.7,-.25) {$\color{black}\scriptstyle{-n-1}$};
  \draw[<-,thick,black] (0,-0.05) to[out=180,in=90] (-.2,-0.25);
  \draw[-,thick,black] (0.2,-0.25) to[out=90,in=0] (0,-.05);
 \draw[-,thick,black] (-.2,-0.25) to[out=-90,in=180] (0,-0.45);
  \draw[-,thick,black] (0,-0.45) to[out=0,in=-90] (0.2,-0.25);
 \node at (-.3,.58) {$\scriptstyle{i}$};
   \node at (0.45,-0.6) {$\scriptstyle{\lambda}$};
	\draw[<-,thick,black] (0.3,.45) to[out=-90, in=0] (0,0.05);
	\draw[-,thick,black] (0,0.05) to[out = -180, in = -90] (-0.3,.45);
   \node at (0.27,0.25) {$\color{black}\bullet$};
   \node at (0.52,0.25) {$\color{black}\scriptstyle{n}$};
\end{tikzpicture}}.
\end{align}

\subsubsection{Further relations} We prove some further relations that we will use in the last section to prove that the linear~$(3,2)$-polygraph presenting $\Ag$ is convergent.

\begin{lemma}
\label{L:FurtherRelations}
The following relations hold in $\Ag$: 
\vskip-10pt
\begin{align*}
\tfishul{i} & = - \sum_{n=0}^{- \langle h_i, \lambda\rangle}
\!\!\mathord{
\begin{tikzpicture}[baseline=0]
	\draw[-,thick,black] (0.3,-.45) to[out=90, in=0] (0,-0.05);
	\draw[->,thick,black] (0,-0.05) to[out = 180, in = 90] (-0.3,-.45);
    \node at (-0.3,-.55) {$\scriptstyle{i}$};
   \node at (0.25,-0.25) {$\color{black}\bullet$};
   \node at (0.50,-.25) {$\color{black}\scriptstyle{n}$};
  \draw[-,thick,black] (0,0.45) to[out=180,in=90] (-.2,0.25);
  \draw[-,thick,black] (0.2,0.25) to[out=90,in=0] (0,.45);
 \draw[<-,thick,black] (-.2,0.25) to[out=-90,in=180] (0,0.05);
  \draw[-,thick,black] (0,0.05) to[out=0,in=-90] (0.2,0.25);
 \node at (-.25,.45) {$\scriptstyle{i}$};
   \node at (0.65,-0.5) {$\scriptstyle{\lambda}$};
   \node at (0.2,0.25) {$\color{black}\bullet$};
   \node at (0.7,0.25) {$\color{black}\scriptstyle{-n-1}$};
\end{tikzpicture}},  \qquad \qquad
\tfishdl{i} = \sum_{n=0}^{\langle h_i, \lambda\rangle}
\!\!\mathord{
\begin{tikzpicture}[baseline=0]
    \node at (-0.21,-.52) {$\scriptstyle{i}$};
   \node at (0.2,-0.25) {$\color{black}\bullet$};
   \node at (0.7,-.25) {$\color{black}\scriptstyle{-n-1}$};
  \draw[<-,thick,black] (0,-0.05) to[out=180,in=90] (-.2,-0.25);
  \draw[-,thick,black] (0.2,-0.25) to[out=90,in=0] (0,-.05);
 \draw[-,thick,black] (-.2,-0.25) to[out=-90,in=180] (0,-0.45);
  \draw[-,thick,black] (0,-0.45) to[out=0,in=-90] (0.2,-0.25);
 \node at (-.3,.55) {$\scriptstyle{i}$};
   \node at (0.55,-0.68) {$\scriptstyle{\lambda}$};
	\draw[-,thick,black] (0.3,.45) to[out=-90, in=0] (0,0.05);
	\draw[->,thick,black] (0,0.05) to[out = -180, in = -90] (-0.3,.45);
   \node at (0.27,0.25) {$\color{black}\bullet$};
   \node at (0.5,0.25) {$\color{black}\scriptstyle{n}$};
\end{tikzpicture}}.
\end{align*}
\end{lemma}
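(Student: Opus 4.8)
The two relations in Lemma \ref{L:FurtherRelations} are the ``left-handed'' analogues of the relations \eqref{everything} from \cite{BRU15}, which express the curls $\tfishur{i}$ and $\tfishdr{i}$ (right-handed fish diagrams) as sums of dotted cup-cap and bubble diagrams. My plan is to deduce the left versions from the right ones by exploiting the pivotal structure of $\Ag$: since $\Ag$ is pivotal, every $2$-cell is determined up to isotopy, and the cyclicity relations recorded before \eqref{adjfinal}, together with the adjunction relations \eqref{rightadj} and \eqref{adjfinal}, let me rotate a diagram through $180^{\circ}$ without changing its value. The first step is therefore to take the established identity for $\tfishur{i}$ (resp. $\tfishdr{i}$) and apply the appropriate sequence of caps and cups coming from the biadjunction $F_i 1_{\lambda+\alpha_i} \dashv E_i \one$ and its counterpart, dragging the free strand around so that the right curl is turned into the left curl $\tfishul{i}$ (resp. $\tfishdl{i}$).

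The key computational steps are then as follows. First I would carefully track how the weight region changes under rotation: rotating a diagram living in weight $\lambda$ typically produces a diagram in a shifted weight, so the value $\hil$ controlling the number of summands must be replaced accordingly, which is exactly what accounts for the sign change ($- \sum$ versus $+ \sum$) and the switch between the two orientations of the small bubble in the statement. Second, when rotating the dotted cup-cap terms appearing on the right-hand side of \eqref{everything}, each dot keeps its multiplicity but the orientation of the cup/cap and of the bubble flips; here I would invoke the cyclicity relations $\cuprdl{}{} = \cuprdr{}{}$ etc. to move dots freely across caps and cups, and the bubble definitions \eqref{posbub}--\eqref{negbub} (including the fake-bubble conventions and the Infinite Grassmannian relation \eqref{E:InfiniteGrassmannian}) to re-express the resulting rotated bubbles in the correct orientation. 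A clean alternative, which I would pursue in parallel, is to use the symmetry of the defining data directly: the relations in \cite{BRU15} come in up/down-symmetric pairs, and the left curl relation can be obtained from the right curl relation by applying the duality $2$-functor that exchanges $E_i$ and $F_i$ and reverses orientations, composed with the horizontal reflection that sends a diagram to its mirror image.

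I expect the main obstacle to be bookkeeping of signs and weight shifts rather than any deep structural difficulty. Specifically, the delicate point is checking that after the rotation the summation range $0 \leq n \leq -\hil$ (resp. $0 \leq n \leq \hil$) and the exponent $-n-1$ on the inner bubble come out exactly as stated, since the rotation sends $\hil \mapsto$ a shifted value and one must verify that the fake-bubble conventions make the two boundary terms ($n=0$ and the top of the range) consistent. A secondary subtlety is that the relations \eqref{everything} are stated under a sign hypothesis on $\hil$, whereas the left relations should hold for all values of $\hil$; I would handle the remaining sign regime either by a direct analogous rotation argument starting from the other half of \eqref{everything}, or by noting that both sides are polynomial in the relevant bubble generators and that the fake-bubble definitions guarantee the formula interpolates correctly. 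Once the orientation-reversal and weight-shift dictionary is fixed, each of the two identities reduces to a finite diagrammatic manipulation that terminates in the claimed normal form.
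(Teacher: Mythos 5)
Your central mechanism does not work. The pivotal structure of $\Ag$ implements rotations and planar isotopies, and these \emph{preserve the handedness of a curl}: rotating the diagram $\tfishur{i}$ through $180^{\circ}$ produces $\tfishdr{i}$ (which is exactly why the two identities in (\ref{everything}) are pivotal mates of one another), but it can never produce $\tfishul{i}$ or $\tfishdl{i}$, since a right curl and a left curl have opposite writhe and only a reflection, not a rotation, exchanges them. So the left-curl identities of the lemma are not formal consequences of (\ref{everything}) via cyclicity and adjunction, and no amount of weight-shift or sign bookkeeping repairs this; the discrepancy you flag between the summation ranges and signs is not an artifact of rotation but a symptom that the two pairs of identities are genuinely independent under pivotal moves. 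Your fallback (a duality $2$-functor exchanging $E_i$ and $F_i$ composed with a reflection) is the right kind of idea, but no such left-right symmetry of $\Ag$ is available off the shelf in this development: Rouquier's presentation is chiral (the rightward crossing of (\ref{E:RightCrossing}) and the inversion conditions (\ref{inv1})--(\ref{inv3}) are imposed on one side only), so one would first have to check that the proposed reflection preserves all defining relations, which is essentially the content being proved. The only symmetry the paper invokes, Brundan's anti-involution $\T$, relates the two identities of the lemma \emph{to each other}, not to the right-curl identities (\ref{everything}) --- which is precisely why a computation is still needed.

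What the paper actually does, and what is missing from your proposal, is a two-fold evaluation argument. After using $\T$ to reduce to the first identity, it splits into cases: for $\hil > 0$ both sides vanish by (\ref{lemmepos}) (the sum on the right is empty), and for $\hil = 0$ both sides reduce to $- \capl{i}$ by the definition of $\varepsilon_i$ and the degree conditions. For $\hil < 0$ it evaluates the dotted double crossing $\tdcrossrld{i}{i}$ in two different ways: once by applying the invertibility relation (\ref{neg}) directly and truncating the sums with the bubble degree conditions (\ref{posbub}), and once by first sliding the dot through the crossing with the KLR relation --- which creates a left-curl correction term and a cup-bubble term --- and then applying (\ref{neg}), the right-curl relation (\ref{everything}), and the degree conditions to what remains. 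Equating the two outcomes and using bilinearity of the vertical composition isolates exactly the claimed expression for $\tfishul{i}$. If you want to salvage your approach, you would need to first construct and verify a reflection $2$-functor on $\Ag$ (in the spirit of Khovanov--Lauda's symmetry $\widetilde{\psi}$ on $\Ug$), which is more work than the paper's direct computation.
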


\begin{proof} Using the symmetry in $\Ag$ coming from the anti-involution $\T$ defined by Brundan in \cite[Theorem 2.3]{BRU15}, it suffices to prove the first relation. For $\hil > 0$, it follows directly from the relations (\ref{lemmepos}). For $\hil = 0$, the left handside is equal to $ - \capl{i}$ using the definition of $\varepsilon_i$ when $\hil \geq 0$. The right handside also reduces to $ - \capl{i} $ because the bubble that remains is an identity, using the degree conditions.
Let us prove it for $\hil < 0$. 
On the one hand, using the relation of invertibility, we have 
\begin{align*}
\tdcrossrld{i}{i} & = \sum\limits_{n=0}^{- \hil -1} 
\sum\limits_{r \geq 0}{\stda{i}{n+1}{-n-r-2}{r}} - \diddownupdl{i}{i} \underset{(\ref{posbub})}{=}  \sum\limits_{n=0}^{- \hil -1} 
\sum\limits_{r=0}^{- \hil -1}{\stda{i}{n+1}{-n-r-2}{r}} -\diddownupdl{i}{i} \\ & =  \sum\limits_{n=1}^{- \hil} \sum\limits_{r=0}^{- \hil -1}{\stda{i}{n}{-n-r-1}{r}} - \diddownupdl{i}{i}  = \sum\limits_{n=1}^{- \hil} \sum\limits_{r=0}^{- \hil}{\stda{i}{n}{-n-r-1}{r}} - \diddownupdl{i}{i}
\end{align*}
The last equality above is due to the fact that $ \sum\limits_{n=1}^{- \hil} {\stdahil{i}{n}{-n +\hil -1}{- \hil}} = 0$ since $n > 0$, using $(\ref{posbub})$. On the other hand, we can make the dot go down using the upward KLR relations:
\begin{align*} 
\tdcrossrld{i}{i} & = \tdcrossrldd{i}{i} - \raisebox{2mm}{$\acapl{i}$}  + \raisebox{3mm}{$\cuprb{i}$} \underset{(\ref{neg})}{=}  \sum\limits_{n=0}^{- \hil - 1} \sum\limits_{r \geq 0}{\stda{i}{n}{-n-r-2}{r+1}} - \diddownupdl{i}{i} - \acapl{i} + \raisebox{3mm}{$\cuprb{i}$} \\ & \underset{(\ref{everything})}{=} \sum\limits_{n=0}^{- \hil - 1} \sum\limits_{r \geq 0}{\stda{i}{n}{-n-r-2}{r+1}}  + \sum\limits_{n=0}^{- \hil} \lemmec{i}{n} + \raisebox{3mm}{$\cuprb{i}$} - \diddownupdl{i}{i} \\
&  \underset{(\ref{posbub})}{=} \sum\limits_{n=0}^{- \hil - 1} \sum\limits_{r = 0}^{- \hil -1}{\stda{i}{n}{-n-r-2}{r+1}} + \sum\limits_{n=0}^{- \hil} \lemmec{i}{n} + \raisebox{3mm}{$\cuprb{i}$} - \diddownupdl{i}{i} \\ & \underset{(\ast)}{=}  \sum\limits_{n=0}^{- \hil } \sum\limits_{r = 0}^{- \hil -1}{\stda{i}{n}{-n-r-2}{r+1}} + \sum\limits_{n=0}^{- \hil} \lemmec{i}{n} + \raisebox{3mm}{$\cuprb{i}$} - \diddownupdl{i}{i} \\ 
& = \sum\limits_{n=0}^{- \hil } \sum\limits_{r = 0}^{- \hil }{\stda{i}{n}{-n-r-1}{r}} + \raisebox{3mm}{$\cuprb{i}$} - \diddownupdl{i}{i},
\end{align*}
where the $(\ast)$ equality is due to the fact the term in $-\hil$ in the first summand is zero by the degree conditions.
Thus, the two expressions obtained have to be equal, and so we must have
\begin{align*}
\sum\limits_{r=0}^{- \hil} \raisebox{3mm}{$\lemmee{i}{r}$} + \raisebox{3mm}{$\cuprb{i}$} = 0.
\end{align*} 
Using the bilinearity of the vertical composition in the linear~$(2,2)$-category $\Ag$,we obtain the result.
\end{proof}

\subsection{The linear~$(3,2)$-polygraph $\mathcal{KLR}$}
\label{SS:LinPolKLRb}
We recall following \cite{BRU15,KL3} a presentation of the linear~$(2,2)$-category $\widehat{\Ag}$ by a linear~$(3,2)$-polygraph, which we will prove quasi-terminating and confluent modulo its subpolygraph of isotopies.
\begin{definition}
Let $\mathcal{KLR}$ be the linear~$(3,2)$-polygraph defined by:
\begin{enumerate}[{\bf i)}]
\item the elements of $\mathcal{KLR}_0$ are the weights $\lambda \in X$ of the Kac-Moody algebra;
\item the elements of $\mathcal{KLR}_1$ are given by
\[ 1_{\lambda '} \mathcal{E}_{\varepsilon_1 i_1} \dots \mathcal{E}_{\varepsilon_m i_m} 1_{\lambda} \] for any signed sequence of vertices $(\varepsilon_1 i_1, \dots, \varepsilon_m i_m)$ in $\text{SSeq} := \coprod\limits_{\mathcal{V} \in \mathbb{N}[I]} \text{SSeq}(\mathcal{V})$, and $\lambda$,$\lambda '$ in $X$. Such a $1$-cell has for $0$-source $\lambda$ and $0$-target $\lambda '$, and 
\[ 1_{\lambda ''} \mathcal{E}_{\varepsilon'_1 j_1} \dots \mathcal{E}_{\varepsilon_l j_l} 1_{\lambda'} \star_0 1_{\lambda '} \mathcal{E}_{\varepsilon_1 i_1} \dots \mathcal{E}_{\varepsilon_m i_m} 1_{\lambda} = 1_{\lambda ''} \mathcal{E}_{\varepsilon'_1 j_1} \dots \mathcal{E}_{\varepsilon_m i_m} 1_{\lambda} \]
\item the elements of $\mathcal{KLR}_2$ are the following generating $2$-cells: for any $i$ in $I$ and $\lambda '$ in $X$,
\begin{align*}
\udott{i} \qquad \crossup{i}{j} \qquad \ddott{i} \qquad \crossdn{i}{j} \qquad
\capl{i} \qquad  \cupl{i} \qquad \capr{i} \qquad \cupr{i}
\end{align*}
\item $\mathcal{KLR}_3$ consists of the following $3$-cells:
\begin{itemize}
\item[1)] The $3$-cells of the linear~$(3,2)$-polygraph $\textrm{KLR}$ for both upward and downward orientations of all strands. For any $3$-cell $\delta$ in $\text{KLR}_3$, we denote by $\delta^{\lambda,+}$ (resp. $\delta^{\lambda,-}$) the corresponding $3$-cell in $\KLRb$ with upward (resp. downward) oriented strands and the rightmost region of the diagram being labelled by $\lambda$.  
\item[2)] The isotopy $3$-cells: for any $i \in I$ and $\lambda \in X$
\begin{align}
\label{E:IsotopyCells}
& \mathord{
\begin{tikzpicture}[baseline = 0]
  \draw[->,thick,black] (0.3,0) to (0.3,.4);
	\draw[-,thick,black] (0.3,0) to[out=-90, in=0] (0.1,-0.4);
	\draw[-,thick,black] (0.1,-0.4) to[out = 180, in = -90] (-0.1,0);
	\draw[-,thick,black] (-0.1,0) to[out=90, in=0] (-0.3,0.4);
	\draw[-,thick,black] (-0.3,0.4) to[out = 180, in =90] (-0.5,0);
  \draw[-,thick,black] (-0.5,0) to (-0.5,-.4);
   \node at (-0.5,-.6) {$\scriptstyle{i}$};
   \node at (0.5,0) {$\scriptstyle{\lambda}$};
\end{tikzpicture}
} \overset{i_1^0}{\Rrightarrow}
\mathord{\begin{tikzpicture}[baseline=0]
  \draw[->,thick,black] (0,-0.4) to (0,.4);
   \node at (0,-.6) {$\scriptstyle{i}$};
   \node at (0.2,0) {$\scriptstyle{\lambda}$};
\end{tikzpicture}
}, \quad
\mathord{
\begin{tikzpicture}[baseline = 0]
  \draw[->,thick,black] (0.3,0) to (0.3,-.4);
	\draw[-,thick,black] (0.3,0) to[out=90, in=0] (0.1,0.4);
	\draw[-,thick,black] (0.1,0.4) to[out = 180, in = 90] (-0.1,0);
	\draw[-,thick,black] (-0.1,0) to[out=-90, in=0] (-0.3,-0.4);
	\draw[-,thick,black] (-0.3,-0.4) to[out = 180, in =-90] (-0.5,0);
  \draw[-,thick,black] (-0.5,0) to (-0.5,.4);
   \node at (-0.5,.6) {$\scriptstyle{i}$};
   \node at (0.5,0) {$\scriptstyle{\lambda}$};
\end{tikzpicture}
}
\overset{i_3^0}{\Rrightarrow}
\mathord{\begin{tikzpicture}[baseline=0]
  \draw[<-,thick,black] (0,-0.4) to (0,.4);
   \node at (0,.6) {$\scriptstyle{i}$};
   \node at (0.2,0) {$\scriptstyle{\lambda}$};
\end{tikzpicture}
}, \quad 
\mathord{
\begin{tikzpicture}[baseline = 0]
  \draw[-,thick,black] (0.3,0) to (0.3,-.4);
	\draw[-,thick,black] (0.3,0) to[out=90, in=0] (0.1,0.4);
	\draw[-,thick,black] (0.1,0.4) to[out = 180, in = 90] (-0.1,0);
	\draw[-,thick,black] (-0.1,0) to[out=-90, in=0] (-0.3,-0.4);
	\draw[-,thick,black] (-0.3,-0.4) to[out = 180, in =-90] (-0.5,0);
  \draw[->,thick,black] (-0.5,0) to (-0.5,.4);
   \node at (0.3,-.6) {$\scriptstyle{i}$};
   \node at (0.5,0) {$\scriptstyle{\lambda}$};
\end{tikzpicture}
}
\overset{i_4^0}{\Rrightarrow}
\mathord{\begin{tikzpicture}[baseline=0]
  \draw[->,thick,black] (0,-0.4) to (0,.4);
   \node at (0,-.6) {$\scriptstyle{i}$};
   \node at (0.2,0) {$\scriptstyle{\lambda}$};
\end{tikzpicture}
}, \quad
\mathord{
\begin{tikzpicture}[baseline = 0]
  \draw[-,thick,black] (0.3,0) to (0.3,.4);
	\draw[-,thick,black] (0.3,0) to[out=-90, in=0] (0.1,-0.4);
	\draw[-,thick,black] (0.1,-0.4) to[out = 180, in = -90] (-0.1,0);
	\draw[-,thick,black] (-0.1,0) to[out=90, in=0] (-0.3,0.4);
	\draw[-,thick,black] (-0.3,0.4) to[out = 180, in =90] (-0.5,0);
  \draw[->,thick,black] (-0.5,0) to (-0.5,-.4);
   \node at (0.3,.6) {$\scriptstyle{i}$};
   \node at (0.5,0) {$\scriptstyle{\lambda}$};
\end{tikzpicture}
}
\overset{i_2^0}{\Rrightarrow}
\mathord{\begin{tikzpicture}[baseline=0]
  \draw[<-,thick,black] (0,-0.4) to (0,.4);
   \node at (0,.6) {$\scriptstyle{i}$};
   \node at (0.2,0) {$\scriptstyle{\lambda}$};
\end{tikzpicture}
},
\end{align}
\begin{align}
& \suld{i} \overset{i_2^1}{\Rrightarrow} \dpd{i}, \quad \surdd{i} \overset{i_1^1}{\Rrightarrow} \upd{i}, \quad \sdrd{i} \overset{i_3^1}{\Rrightarrow} \dpd{i}, \quad \sdld{i} \overset{i_4^1}{\Rrightarrow} \upd{i},
\end{align}
\begin{align}
\label{E:IsotopyCells2}
 & \cuprdl{i}{}  \overset{i_1^2}{\Rrightarrow} \cuprdr{i}{}, \quad \caprdl{i}{} \overset{i_3^2}{\Rrightarrow} \caprdr{i}{}, \quad 
\cupldl{i}{} \overset{i_2^2}{\Rrightarrow} \cupldr{i}{}, \quad \capldl{i}{} \overset{i_4^2}{\Rrightarrow} \capldr{i}{} 
\end{align}

\item[3)] The $3$-cells coming from the new generators in $\Ag$: for any $i,j \in I$, $\lambda \in X$
\begin{equation} 
\label{E:NewGenerators}
\tfishdlpf{i}{- \hil } \overset{D_{i,\lambda}^{-}}{\Rrightarrow} \cupl{i} \quad \text{for $\hil \leq 0$}, \hspace{0.5cm}
\tfishulpf{i}{ \hil } \overset{B_{i,\lambda}^{+}}{\Rrightarrow} - \capl{i} \quad \text{for $\hil \geq 0$}
\end{equation}
\item[4)] The $3$-cells for the degree conditions on bubbles: for every $i \in I$, $\lambda \in X$
\begin{equation}
\label{E:ReductionPosBub}
\posbubd{i}{n} \underset{b_{i,\lambda}^{0,n}}{\overset{b_{i,\lambda}^1}{\Rrightarrow}}  \left\{
    \begin{array}{ll}
        1_{1_\lambda} & \mbox{if } n = \hil - 1 \\
        0 & \mbox{if } n < \hil - 1 
    \end{array}
\right. 
\end{equation}
\begin{equation}
\label{E:ReductionNegBub} 
\negbubd{i}{n} \underset{c_{i,\lambda}^{0,n}}{\overset{c_{i,\lambda}^1}{\Rrightarrow}} \left\{
    \begin{array}{ll}
        1_{1_\lambda} & \mbox{if } n = - \hil - 1 \\
        0 & \mbox{if } n < - \hil - 1 
    \end{array}
\right.
\end{equation}
\item[5)] The Infinite-Grassmannian $3$-cells: for any $i \in I$, $\lambda \in X$ and $\alpha > 0$,
\begin{align*}
\posbubdff{i}{\hil - 1 + \alpha } \overset{\text{ig}_\alpha}{\Rrightarrow} - \sum\limits_{l=1}^{ \alpha } {\dbbubff{i}{\hil - 1 + \alpha - l}{- \hil -1 +l}}
\end{align*}
\item[6)] Bubble-slide $3$-cells: for any $i,j$ in $I$ and any $\alpha \geq 0$,
 \begin{eqnarray*} \label{klr1} 
\hspace{-1.5cm}  \raisebox{-2mm}{$\posbubdfffsl{i}{ \hilx{j} - 1 + \alpha}$} \identu{j}
 & \overset{s_{i,j,\lambda,\alpha}^+}{\Rrightarrow} & \left\lbrace
\begin{array}{ccc}
  \sum\limits_{f=0}^{\alpha} (\alpha +1 -f) \identdotsufsl{i}{\alpha -f} \posbubdffr{i}{\hil -1 + f} \;  & \qquad & \text{if $i=j$, } \\ \\
  \identusl{j} \posbubdffr{i}{\hil -1 + \alpha}
  & &
 \text{if $i \cdot j=0$, }
  \\    \\
 \identdotusl{j} \posbubdffr{i}{- \hil + \alpha -2} + 
  \quad + \quad
    \identusl{j} \posbubdffr{i}{\hil -1 + \alpha}
 & &
 \text{if $i \cdot j=-1$. }
\end{array}
\right.
\end{eqnarray*}

and for any $i,j$ in $I$ and any $\alpha \geq 0$,
 \begin{eqnarray*} \label{klr1} 
\hspace{-2.5cm}  \raisebox{-2mm}{$\negbubdfffsl{i}{ - \hilx{j} - 1 + \alpha}$} \identu{j}
 & \overset{s_{i,j,\lambda,\alpha}^-}{\Rrightarrow} & \left\lbrace
\begin{array}{cc}
  \identdotsusl{i}{2} \negbubdfff{i}{-\hil + \alpha -3} - 2 \identdotusl{i} \negbubdfff{i}{- \hil  + \alpha -2} + \identusl{i} \negbubdfff{i}{- \hil -1 + \alpha}    & \hspace{-0.4cm} \text{if $i=j$, }
  \\    \\
 \identusl{i} \negbubdfff{i}{- \hil -1 + \alpha} & \hspace{-0.3cm} \text{if $i \cdot j = 0$. }
  \\     \\
\sum\limits_{f=0}^{\alpha} (-1)^f \identdotsusl{j}{f} \negbubdfff{i}{- \hil -1 + \alpha -f}
 & \hspace{-0.3cm}
 \text{if $i \cdot j=-1$. }
\end{array}
\right.
\end{eqnarray*}
so as their reflections $r_{i,j,\lambda, \alpha}^+$ and $r_{i,j,\lambda, \alpha}^-$ through a horizontal axis, allowing to make a bubble go through a downward strand. These reflexions correspond to the images of these relations via the symmetry $\widetilde{\psi}$ defined by Khovanov and Lauda in \cite[Section 3.3]{KL3}. Note that these relations were originally proved by Khovanov and Lauda in \cite[Propositions 3.3 $\&$ 3.4]{KL3}, and are added to this presentation to reach confluence modulo as it will be explained later.
\item[7)] The invertibility $3$-cells: for any $i,j \in I$ and $\lambda \in X$
\begin{align*}
\tdcrosslr{i}{j} & \overset{F_{i,j,\lambda}}{\Rrightarrow}  \mathord{
\begin{tikzpicture}[baseline = 0]
	\draw[<-,thick,black] (0.08,-.3) to (0.08,.4);
	\draw[->,thick,black] (-0.28,-.3) to (-0.28,.4);
   \node at (-0.28,-.4) {$\scriptstyle{i}$};
   \node at (0.08,.5) {$\scriptstyle{j}$};
   \node at (.3,.05) {$\scriptstyle{\lambda}$};
\end{tikzpicture}
}, \qquad
\tdcrossrl{i}{j}  \overset{E_{i,j,\lambda}}{\Rrightarrow} \mathord{
\begin{tikzpicture}[baseline = 0]
	\draw[->,thick,black] (0.08,-.3) to (0.08,.4);
	\draw[<-,thick,black] (-0.28,-.3) to (-0.28,.4);
   \node at (-0.28,.5) {$\scriptstyle{i}$};
   \node at (0.08,-.4) {$\scriptstyle{j}$};
   \node at (.3,.05) {$\scriptstyle{\lambda}$};
\end{tikzpicture}
} \\
\tdcrosslr{i}{i} 
& \overset{F_{i,\lambda}}{\Rrightarrow} -\mathord{
\begin{tikzpicture}[baseline = 0]
	\draw[<-,thick,black] (0.08,-.3) to (0.08,.4);
	\draw[->,thick,black] (-0.28,-.3) to (-0.28,.4);
   \node at (-0.28,-.4) {$\scriptstyle{i}$};
   \node at (0.08,.5) {$\scriptstyle{i}$};
   \node at (.3,.05) {$\scriptstyle{\lambda}$};
\end{tikzpicture}
} +
\!\!\sum_{n=0}^{\langle h_i, \lambda \rangle-1}
\sum_{r \geq 0}
\stdb{i}{r}{-n-r-2}{n}, \\
\tdcrossrl{i}{i}
&\overset{E_{i,\lambda}}{\Rrightarrow} -\mathord{
\begin{tikzpicture}[baseline = 0]
	\draw[->,thick,black] (0.08,-.3) to (0.08,.4);
	\draw[<-,thick,black] (-0.28,-.3) to (-0.28,.4);
   \node at (-0.28,.5) {$\scriptstyle{i}$};
   \node at (0.08,-.4) {$\scriptstyle{i}$};
   \node at (.3,.05) {$\scriptstyle{\lambda}$};
\end{tikzpicture}
} + 
\!\!\sum_{n=0}^{-\langle h_i, \lambda \rangle-1}
\sum_{r \geq 0}
\stda{i}{n}{-n-r-2}{r}.
\end{align*}
\item[8)] The $3$-cells corresponding to the $\msl$ relations: for any $i \in I$ and $\lambda \in X$
\begin{align*}
\tfishur{i}
&\overset{C_{i,\lambda}}{\Rrightarrow}
\sum_{n=0}^{\langle h_i, \lambda\rangle}
\rulec{i}{n}; \qquad
  \tfishdr{i}
\overset{A_{i,\lambda}}{\Rrightarrow}
-\sum_{n=0}^{-\langle h_i, \lambda\rangle}
\rulea{i}{n}; \\
\tfishul{i} & \overset{B_{i,\lambda}}{\Rrightarrow} - \sum_{n=0}^{- \langle h_i, \lambda\rangle}
\ruleb{i}{n}; \qquad
\tfishdl{i}  \overset{D_{i,\lambda}}{\Rrightarrow} \sum_{n=0}^{\langle h_i, \lambda\rangle}
\ruled{i}{n}.
\end{align*}
\end{itemize}
\end{enumerate}
\end{definition}

\begin{remark}
The $3$-cells defining the new caps and cups generators in $3)$ are redundant in this presentation since they can be recovered using the $\mathfrak{sl}_2$ relations of $8)$, the degree condition relations on bubbles of $4)$ and the KLR relations of $1)$: for instance, we have the following rewriting sequence in $\mathcal{KLR}$: for $\hil > 0$,
$$ \tfishulpf{i}{\hil} \Rrightarrow \mathord{
\begin{tikzpicture}[baseline = 0,scale=0.7]
\draw[<-,thick,black] (0.8,0.5) to[out=90, in=0] (0.5,0.9);
	\draw[-,thick,black] (0.5,0.9) to[out = 180, in = 90] (0.2,0.5);

	\draw[-,thick,black] (0.2,.5) to (-0.3,-.5);
	\draw[-,thick,black] (-0.2,.2) to (0.2,-.3);
        \draw[-,thick,black] (0.2,-.3) to[out=130,in=180] (0.5,-.5);
        \draw[-,thick,black] (0.5,-.5) to[out=0,in=270] (0.8,.5);
        \draw[-,thick,black] (-0.2,.2) to[out=130,in=0] (-0.5,.5);
        \draw[->,thick,black] (-0.5,.5) to[out=180,in=-270] (-0.8,-.5);
        \node at (1,0.5) {$\scriptstyle{\lambda}$};
         \node at (0.3,0.8) {$\color{black}\bullet$};
   \node at (-0.5,0.85) {$\scriptstyle{\hil}$};
\end{tikzpicture}}   \Rrightarrow  \mathord{
\begin{tikzpicture}[baseline = 0,scale=0.7]
\draw[<-,thick,black] (0.8,0.5) to[out=90, in=0] (0.5,0.9);
	\draw[-,thick,black] (0.5,0.9) to[out = 180, in = 90] (0.2,0.5);

	\draw[-,thick,black] (0.2,.5) to (-0.3,-.5);
	\draw[-,thick,black] (-0.2,.2) to (0.2,-.3);
        \draw[-,thick,black] (0.2,-.3) to[out=130,in=180] (0.5,-.5);
        \draw[-,thick,black] (0.5,-.5) to[out=0,in=270] (0.8,.5);
        \draw[-,thick,black] (-0.2,.2) to[out=130,in=0] (-0.5,.5);
        \draw[->,thick,black] (-0.5,.5) to[out=180,in=-270] (-0.8,-.5);
        \node at (1,0.5) {$\scriptstyle{\lambda}$};
         \node at (-0.2,-0.4) {$\color{black}\bullet$};
   \node at (-0.3,-0.7) {$\scriptstyle{\hil}$};
\end{tikzpicture}} - \sum\limits_{a+b= \hil -1} \mathord{
\begin{tikzpicture}[baseline=0]
	\draw[-,thick,black] (0.3,-.45) to[out=90, in=0] (0,-0.05);
	\draw[->,thick,black] (0,-0.05) to[out = 180, in = 90] (-0.3,-.45);
    \node at (-0.3,-.55) {$\scriptstyle{i}$};
   \node at (0.25,-0.25) {$\color{black}\bullet$};
   \node at (0.4,-.25) {$\color{black}\scriptstyle{a}$};
  \draw[<-,thick,black] (0,0.45) to[out=180,in=90] (-.2,0.25);
  \draw[-,thick,black] (0.2,0.25) to[out=90,in=0] (0,.45);
 \draw[-,thick,black] (-.2,0.25) to[out=-90,in=180] (0,0.05);
  \draw[-,thick,black] (0,0.05) to[out=0,in=-90] (0.2,0.25);
 \node at (-.25,.45) {$\scriptstyle{i}$};
   \node at (0.45,0) {$\scriptstyle{\lambda}$};
   \node at (-0.2,0.25) {$\color{black}\bullet$};
   \node at (-0.4,0.25) {$\color{black}\scriptstyle{b}$};
\end{tikzpicture}} \Rrightarrow 0 - \capl{i}    $$

Similarly, one proves that the relations (\ref{lemmepos}) - (\ref{lemmeneg}) can be recovered with this presentation, so the corresponding $3$-cells can be removed from the presentation. We still denote by $\KLRb$ the linear~$(3,2)$-polygraph defined as above but with the $3$-cells of $3)$ removed.
\end{remark}

Following \cite{KL3,BRU15}, the $3$-cells in $\KLRb$ are sufficient to recover all the relations in $\Ag$, so that we have the following result:
\begin{proposition}
The linear~$(3,2)$-polygraph $\KLRb$ presents the linear~$(2,2)$-category $\widehat{\Ag}$.
\end{proposition}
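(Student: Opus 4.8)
The statement to prove is that the linear~$(3,2)$-polygraph $\KLRb$ presents the linear~$(2,2)$-category $\widehat{\Ag}$. Since a presentation of a linear~$(2,2)$-category by a linear~$(3,2)$-polygraph $P$ means that $\widehat{\Ag}$ is isomorphic to the quotient $P_2^\ell / \equiv$, where $\equiv$ is the congruence generated by the $3$-cells, the plan is to verify two things: first that the generating $0$-, $1$- and $2$-cells of $\KLRb$ match the generators of $\widehat{\Ag}$, and second that the congruence generated by $\KLRb_3$ coincides exactly with the relations defining $\widehat{\Ag}$.

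First I would check the generators. By construction the $0$-cells of $\KLRb$ are the weights $\lambda \in X$, the $1$-cells are the signed sequences $\mathcal{E}_{\varepsilon_1 i_1} \cdots \mathcal{E}_{\varepsilon_m i_m} \one$, and the $2$-cells are the upward and downward dots and crossings together with the four caps and cups $\capl{i}$, $\cupl{i}$, $\capr{i}$, $\cupr{i}$; these coincide with the generators of $\widehat{\Ag}$ once one passes through Brundan's isomorphism and records the new $2$-cells $x'$, $\tau'$, $\eta'$, $\varepsilon'$, $\sigma$, $\sigma'$ defined in the excerpt. So the only substantive work is at the level of $3$-cells.

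Next I would argue that the congruence generated by $\KLRb_3$ equals the defining relations of $\widehat{\Ag}$, and I would do this by mutual inclusion. For the inclusion ``$\KLRb$-relations are consequences of $\Ag$'': each family of $3$-cells corresponds to an orientation of a relation that was either an axiom of $\Ag$ or a relation proved to hold in $\Ag$. Concretely, the KLR $3$-cells of $1)$ come from the KLR axiom; the isotopy $3$-cells of $2)$ are the adjunction and cyclicity relations (\ref{rightadj}), (\ref{adjfinal}) and the displayed cyclicity equalities; the invertibility $3$-cells of $7)$ are (\ref{pos})--(\ref{neg}); the $\msl$ $3$-cells of $8)$ are (\ref{everything}) together with Lemma~\ref{L:FurtherRelations}; the bubble $3$-cells of $4)$ and $5)$ are (\ref{posbub})--(\ref{negbub}) and Theorem~\ref{IG} specialized to (\ref{E:InfiniteGrassmannian}); and the bubble-slide $3$-cells of $6)$ are the relations proved by Khovanov and Lauda in \cite[Propositions 3.3 \& 3.4]{KL3}, transported via the symmetry $\widetilde{\psi}$. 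Since every source and target of a $3$-cell is a genuine relation in $\Ag$, every $3$-cell is $2$-invertible there, so the congruence they generate is contained in the $\Ag$-relations.

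Finally, for the reverse inclusion I would invoke Brundan's isomorphism theorem \cite{BRU15}: the defining relations of $\widehat{\Ag}$ are generated by the KLR relations for both orientations, the adjunctions (\ref{rightadj}), (\ref{adjfinal}), the invertibility relations (\ref{inv1})--(\ref{inv3}) together with the explicit inverse expressions (\ref{pic})--(\ref{invB}) and (\ref{pos})--(\ref{neg}), the $\msl$ relations (\ref{everything}), the bubble degree conditions (\ref{posbub})--(\ref{negbub}), the infinite Grassmannian relation (\ref{ig3}), and the bubble slides of \cite{KL3}; I would show each of these is derivable from the $3$-cells of $\KLRb$. The main obstacle here is bookkeeping rather than conceptual: one must verify that the redundant cap/cup $3$-cells of $3)$, which were removed, really are recoverable from families $1)$, $4)$ and $8)$ (the excerpt's remark sketches this), and that the relations (\ref{lemmepos})--(\ref{lemmeneg}) are likewise recovered, so that no generating relation of $\widehat{\Ag}$ is lost. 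I expect the delicate point to be ensuring that the chosen orientations and the added (a priori redundant) bubble-slide and $\msl$ relations together generate exactly the same congruence as Brundan's presentation, with no missing direction; but since each added relation is provably true in $\Ag$ and the removed ones are provably recoverable, the two congruences agree, yielding the isomorphism $\widehat{\Ag} \simeq \KLRb_2^\ell / \equiv$.
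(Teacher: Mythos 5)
Your proposal is correct and follows essentially the same route as the paper, which justifies the proposition in one line by citing \cite{KL3,BRU15} that the $3$-cells of $\KLRb$ recover all relations of $\Ag$ (with the preceding remark handling the recoverability of the removed cap/cup cells of $3)$ and of (\ref{lemmepos})--(\ref{lemmeneg})). You have simply spelled out the mutual-inclusion bookkeeping that the paper leaves implicit.
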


\subsubsection{Convergent splitting of $\mathcal{KLR}$}
\label{SSS:SplittingKLR}
We define a convergent splitting $(E,R)$ of the linear~$(3,2)$-polygraph $\mathcal{KLR}$ as follows: the linear~$(3,2)$-polygraph $E$ has the same $0$-cells and $1$-cells than $\mathcal{KLR}$, its generating $2$-cells are given by the six following $2$-cells 
\[ \udott{i} \qquad \ddott{i} \qquad
\capl{i} \qquad  \cupl{i} \qquad \capr{i} \qquad \cupr{i} \]
and the $3$-cells of $E$ are the isotopy $3$-cells of $\mathcal{KLR}$ given in (\ref{E:IsotopyCells}) -- (\ref{E:IsotopyCells2}). Note that following \cite{DUP19}, the linear~$(3,2)$-polygraph $E$ is convergent. The linear $(3,2)$-polygraph $R$ is then defined by $R_i = \mathcal{KLR}_i$ for $0 \leq i \leq 2$ and $R_3 = \mathcal{KLR}_3 \backslash E_3$. In the sequel, we will consider rewriting with respect to the linear~$(3,2)$-polygraph $S:= \ER$, and we will prove the following result:
\begin{theorem}
\label{T:Quasi-ConvModKLR}
The linear~$(3,2)$-polygraph modulo $(R,E,\ER)$ is quasi-terminating and confluent modulo $E$.
\end{theorem}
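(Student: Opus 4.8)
The plan is to establish the two assertions separately through the general machinery recalled above: first quasi-termination of $\ER$, then confluence modulo $E$ by way of decreasingness of the critical branchings modulo $E$. To prove that $\ER$ is quasi-terminating, I would adapt the derivation technique of \cite[Theorem 4.2.1]{GM09} already used for the polygraph $\text{KLR}$ in \ref{SSS:TerminationKLR}. Concretely, I would build a $2$-functor $X \colon \KLRb_2^\ast \to \catego{Ord}$ together with a derivation $d$ valued in a suitable module, designed so that every generating $3$-cell of $R$ strictly decreases the associated weight on monomials, with the exception of a controlled finite family $R^{\mathrm{cyc}}$ of cells which leave the weight invariant; the likely candidates for $R^{\mathrm{cyc}}$ are among the bubble-slide and invertibility $3$-cells, whose applications can reproduce configurations up to $E$. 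A crucial requirement is that the weight be invariant under the isotopy $3$-cells of $E$, so that it descends to $E$-equivalence classes and is meaningful for rewriting modulo. Since the weight then takes values in a well-founded set, any infinite $\ER$-sequence has eventually constant weight, and beyond that point the reachable monomials remain within the finite set of configurations permuted by $R^{\mathrm{cyc}}$; hence some $2$-cell recurs infinitely often, which is precisely quasi-termination.

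Once quasi-termination is secured, I would check that $\ER$ is exponentiation free, that is that no $3$-cell of $R$ rewrites a monomial $u$ into $\lambda u + h$ with $\lambda \neq 0$ and $h \neq 0$; this is immediate by inspection, since the target of each generating $3$-cell never contains its own source in its support. By \cite{CDM19}, the conjunction of quasi-termination and exponentiation freeness allows one to reduce confluence modulo $E$ to the analysis of critical branchings modulo $E$, and by \cite[Theorem 2.3.8]{DUP19} it suffices to prove that $\ER$ is decreasing modulo $E$. Following \cite{CDM19,DUP19}, decreasingness modulo $E$ will be obtained from decreasingness of the critical branchings modulo $E$ computed with respect to the labelling $\psiqnf$ to the quasi-normal form, once a choice of quasi-normal forms has been fixed.

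The core of the argument is therefore the exhaustive analysis of the critical branchings modulo $E$, carried out in \ref{SS:ConfluenceModuloKLR}. These split into three groups: the regular overlappings between two $3$-cells of $R$, which extend the list obtained for $\text{KLR}$ in \ref{SSS:CriticalBranchingsKLRAlgebra} to all sign patterns of the strands and now also involve the invertibility, $\mathfrak{sl}_2$, degree, infinite-Grassmannian and bubble-slide $3$-cells; the overlappings modulo $E$ between a $3$-cell of $R$ and an isotopy $3$-cell of $E$; and the local Peiffer and additive branchings modulo, which are decreasing for structural reasons. For each critical branching I would exhibit an explicit confluence diagram modulo $E$ and verify the five decreasingness conditions \textbf{i)}--\textbf{v)}, tracking the $\psiqnf$-labels so that the auxiliary steps filling the diagram are strictly smaller than the peak. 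The infinite sums occurring in the targets of the infinite-Grassmannian and bubble-slide cells are truncated to finite sums by the degree conditions (\ref{posbub})--(\ref{negbub}), keeping all computations finite.

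The main obstacle will be the sheer volume and subtlety of this critical-branching analysis modulo $E$. Rewriting modulo the isotopies sharply reduces the number of overlappings one must treat, but the branchings that mix cups and caps with the $\mathfrak{sl}_2$, invertibility and bubble-slide relations remain intricate, and here confluence alone does not suffice: one must control the quasi-normal-form labels to obtain decreasingness, which is what forces the careful bookkeeping and, in part, the decision to add to the presentation the bubble-slide relations originally proved in \cite{KL3}. On the quasi-termination side, the delicate point is to isolate a family $R^{\mathrm{cyc}}$ that genuinely produces only finite orbits while simultaneously arranging that the weight be $E$-invariant and strictly decreased by every other $3$-cell of $R$; balancing these two constraints in the construction of $X$ and $d$ is the most technical step of the first half.
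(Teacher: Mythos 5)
Your proposal follows essentially the same route as the paper: quasi-termination is obtained exactly as you outline, by combining a derivation-based termination argument for $R$ stripped of its cycle-generating cells (the paper's Lemma on $R' = R \setminus \{s^{\pm}_{i,j,\lambda}\}$) with an $E$-invariant weight function, the cycle-generating family being precisely the bubble-slide $3$-cells — note the invertibility cells $E_{i,\lambda}$, $F_{i,\lambda}$ strictly decrease the crossing count, so contrary to your guess they do not belong to it; confluence modulo $E$ is likewise reduced, via decreasingness with respect to the labelling $\psiqnf$ and exponentiation freedom, to the exhaustive analysis of critical branchings modulo $E$ carried out in the appendices. There is no gap; this is the paper's argument.
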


\subsection{Quasi-termination of $\ER$}
\label{SS:QuasiTermKLR}

\subsubsection{Quasi-reduced monomials}
\label{SSS:Quasi-RedMonomials}
Recall that Alleaume enlighted in \cite{AL16} that linear~$(2,2)$-categories admitting relations making bubbles go through strands cannot be equiped with a monomial order, and thus can not be presented by terminating but rather quasi-terminating rewriting system. This is the case in this setting because of the bubble slide relations creating rewriting cycles, as for instance:
\[ 
\xymatrix@C=2em{ \raisebox{-7mm}{$\bubcapb{j}{i}{\la h_i, \lambda + j_x \ra -1}$} \quad \ar@3 [r] ^-{s_{i,j,\lambda,0}^+} & \raisebox{-8mm}{$\nestcapb{j}{i}{\hil -1}$} \quad \ar@3 [r] ^-{r_{i,j,\lambda - j_X,0}^-} & \quad \raisebox{-7mm}{$\capbubb{j}{i}{\la h_i, \lambda + j_x \ra -1}$} }
\] 
for any $i$ and $j$ such that $i \cdot j = 0$, and where the last equality is due to the exchange relation (\ref{E:ExchangeRel}) of the linear~$(2,2)$-category $\Ag$. Note that there are the same kind of cyclic rewriting sequences in $\KLRb$ for different labels $i$ and $j$, different orientations of bubbles and different number of dots decorating them. There also are the same kind of relations with caps replaced by cups, these relations are not drawn here.

 However, following \cite{AL16}, we say that a monomial in $\Ag$ is \emph{quasi-reduced} if we can only apply to it one of the rewriting sequences given above. 

\begin{remark}
Note that rewriting with respect to the linear~$(3,2)$-polygraph modulo $\ER$ brings additional loops coming from indexed diagrams of the form
\begin{equation} \label{E:CycleIndexIso}
\isoindexa{i}, \qquad \isoindexap{i}.
\end{equation}
using the dot move $3$-cells $i_j^2$ for $1 \leq j \leq 4$, where $k$ is a $2$-cell in $R_2^\ast$. Note that when $k$ is a $2$-cell built of a $\star_0$ and $\star_1$ composite of dots, cups and caps $2$-cells, the diagram in (\ref{E:CycleIndexIso}) is irreducible by $R$, and thus by $\ERE$. When $k$ is built with crossings, one checks that there there are cycles of the following form:
\begin{equation}
\label{E:PivotalCycle}
 \isocbadot{i}{j} \raisebox{-5mm}{$\tfl$} \isocbadotb{i}{j} \raisebox{-5mm}{$- \delta_{i,j} \isocbres{i}$} \raisebox{-5mm}{$\equiv_E$} \isocbadotc{i}{j} \! \raisebox{-5mm}{$- \delta_{i,j} \isocbres{i}$} \raisebox{-5mm}{$\tfl$} \isocbadot{i}{j}\! \raisebox{-5mm}{$- \delta_{i,j} \isocbres{i} + \delta_{i,j} \isocbres{i} $} 
\end{equation}
and from the same diagram closed on its right by a rightward cap and a leftward cup. Similarly, if for $k \geq0$ we denote by \[ \isocbak{i}{j} \]
the diagram obtained as the superposition of $2k$ composable crossings, closed on the left using a cap and a cup, there are cycles in $\ER$ given by:
\[ \isocbakdota{i}{j} \raisebox{-5mm}{$\tfl$}  \isocbakdota{i}{j}  \]
and similarly for a superposition of $2k$ upward oriented crossings closed on its right by a rightward cap and a leftward cup, and for downward oriented crossings. However, one can always exit the cycles of the form (\ref{E:PivotalCycle}) using the $3$-cells $\beta_{i,j}^+$ or $\beta_{i,j}^-$ when the dot is not inside a double crossing, so that we do not take these cycles into account when considering quasi-reduced monomials.
\end{remark}

\subsubsection{Termination without bubble slide $3$-cells}
\label{SSS:TermWithoutBubbleSlide}

Before proving that $\ER$ is quasi-terminating, let us at first prove the following result stating that, without the bubble slide $3$-cells, the linear~$(3,2)$-polygraph $R$ given above is terminating. 

\begin{lemma}
\label{L:TermWithoutBubbleSLide}
	The linear~$(3,2)$-polygraph $R' = \left( R_0,R_1,R_2,R_3 \; \backslash \{ s_{i,j,\lambda}^+,s_{i,j,\lambda}^- \} \right)$ is terminating.
\end{lemma}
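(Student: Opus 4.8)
The plan is to prove termination of $R'$ by the derivation method from \cite{GM09} extended to the linear setting as in \cite{DUP19}, exactly as was done for $\text{KLR}$ in \ref{SSS:TerminationKLR}. The idea is to reuse and extend the termination argument for $\text{KLR}$: the $3$-cells of $R'$ split into three groups, namely the KLR $3$-cells for both upward and downward orientations, the $3$-cells governing caps, cups, invertibility, the $\msl$ relations, the bubble degree conditions and the Infinite-Grassmannian relations, and the isotopy $3$-cells of $E$. I would construct a pair of $2$-functors $X \colon R'^\ast_2 \to \catego{Ord}$ and $Y \colon (R'^\ast_2)^{\op} \to \catego{Ord}$, together with a derivation $d$ valued in a module $M_{X,Y,\Z}$, satisfying the three conditions ${\bf i)}$, ${\bf ii)}$, ${\bf iii)}$ of \ref{SSS:TerminationKLR} that guarantee termination.

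First I would assign weights measuring two quantities simultaneously: the number of crossings and cups/caps appearing in a diagram, and a secondary counting of dots, crossings and the relative heights of strands. The invertibility $3$-cells $F_{i,j,\lambda}, E_{i,j,\lambda}, F_{i,\lambda}, E_{i,\lambda}$, the $\msl$ relations $A_{i,\lambda}, B_{i,\lambda}, C_{i,\lambda}, D_{i,\lambda}$, and the new generator $3$-cells all strictly decrease the number of mixed-orientation crossings (a left-right double crossing or a ``fish'' configuration is replaced by diagrams with strictly fewer such crossings, at the cost of extra dots and bubbles). The bubble-degree $3$-cells of (\ref{E:ReductionPosBub})--(\ref{E:ReductionNegBub}) and the Infinite-Grassmannian $3$-cells strictly decrease the number of bubbles or replace a bubble by a sum of products of strictly fewer-dotted or lower-complexity bubbles. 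The isotopy $3$-cells straighten serpentine strands, strictly decreasing the number of cup-cap pairs, so I would give them a weight in the $X$-functor making each such straightening strictly decreasing. The KLR $3$-cells are already handled by the values of $X$ and $d$ given in \ref{SSS:TerminationKLR}, and I would lift these to both orientations by a symmetric assignment.

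The main obstacle will be finding a single interpretation $(X,Y,d)$ that decreases strictly on all these heterogeneous families at once, in particular on the $\msl$ and invertibility $3$-cells whose targets involve sums indexed by $n$ and $r$ with bubbles and dots of varying degree. Because the targets are sums of monomials, condition ${\bf iii)}$ requires the strict inequality $d(s(\alpha)) > d(h)$ to hold for \emph{every} monomial $h$ in the support of the target, including the dotted bubble-decorated terms produced by the Infinite-Grassmannian and bubble-slide-free relations; these must be weighted so that producing many dots and bubbles is nonetheless a strict decrease in the chosen well-founded order. The natural strategy is a lexicographic combination in $\Z^k$ ordered so that the outermost component counts mixed crossings and cup-cap pairs (strictly decreased by invertibility, $\msl$, isotopy, and new-generator cells), the next counts bubbles (strictly decreased by degree and Grassmannian cells while left nonincreasing by the crossing-reductions), and the innermost recovers the $\text{KLR}$ interpretation for the oriented KLR cells; since $R'$ excludes the bubble-slide $3$-cells $s^{\pm}_{i,j,\lambda}$ — precisely the cells that create rewriting cycles by moving bubbles through strands without reducing any of these measures — no cyclic obstruction arises, and I expect such a lexicographic interpretation to yield strict decrease on every $3$-cell, establishing termination of $R'$.
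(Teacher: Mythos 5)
Your overall strategy -- the derivation method of Guiraud--Malbos extended to the linear setting, with a layered/lexicographic measure separating the crossing-reducing cells, the bubble cells and the KLR cells -- is the same as the paper's. The paper implements it not as one lexicographic interpretation but as three successive derivations plus a filtration argument: first the extension of the KLR derivation of \ref{SSS:TerminationKLR} (crossings and dots keep their values, cups and caps get value $0$), which is strict on the KLR cells and non-increasing on the rest; then the bare crossing count, strict on $A_{i,\lambda},\dots,F_{i,\lambda}$ and constant on the bubble cells; then a bubble measure strict on the degree-condition and Infinite-Grassmannian cells. That staged formulation and your single lexicographic one are interchangeable, and the ordering of the layers is inessential. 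The problem is that two of the components you propose fail exactly where the paper's proof has its only real content.

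First, your middle component, ``the number of bubbles'', strictly \emph{increases} along the Infinite-Grassmannian $3$-cells $\text{ig}_\alpha$: their source is a single dotted bubble and every monomial of their target is a product of \emph{two} bubbles. The paper's measure is $d'(u)=\#\{\text{bubbles in }u\}+\sum_{\pi}\deg(\pi)$, the sum running over clockwise bubbles $\pi$, and the decrease $1+\alpha\, (i\cdot i) > 2+(\alpha-l)(i\cdot i)$ (valid since $l\geq 1$ and $i\cdot i=2$) works precisely because the degree drop compensates the extra bubble; some such degree-weighting is unavoidable, a pure count cannot work. Second, your outermost component ``mixed crossings and cup-cap pairs'' is not monotone either: bubbles are themselves made of a cup and a cap, so the Grassmannian cells increase it (one bubble becomes two), and on the invertibility cells an unweighted count already goes up (the source of $E_{i,\lambda}$ has two crossings and no cups or caps, while each target monomial carries a cup, a cap and a bubble, i.e.\ four cup/cap generators). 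The paper avoids both problems by taking the bare crossing count for this layer, and it may do so because the isotopy $3$-cells you are trying to accommodate are not in $R'$ at all: they lie in $E_3$ and $R_3=\mathcal{KLR}_3\backslash E_3$ (likewise the new-generator $3$-cells were removed from $\mathcal{KLR}$), so no cup-cap term is needed in the first place. Once you drop the isotopy cells and the cup-cap count, and replace the bubble count by the degree-weighted measure $d'$, your lexicographic argument becomes the paper's proof.
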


\begin{proof}
We proceed into three steps. 
\begin{enumerate}[{\bf i)}]
\item At first, let us extend the derivation $d$ defined in \ref{SSS:TerminationKLR} by keeping the same value on crossings and dots, no matter the orientation of strands, and by setting the value on caps and cups $2$-cells as $0$. Using this derivation, we get that $d( s_2(\delta) ) > d( t_2(\delta) )$ for any $3$-cell $\delta$ coming from the linear~$(3,2)$-polygraph $\text{KLR}$. As a consequence, one gets that if the linear~$(3,2)$-polygraph $R''$ defined as $R'$ minus every KLR $3$-cell terminates, then so does $R'$. Indeed, otherwise there would exist an infinite reduction sequence $(f_n)_{n \in \N}$ in $R'$ and thus, an infinite decreasing sequence $(d(f_n))_{n \in \N}$ of natural numbers. Moreover, this sequence would be strictly decreasing at each step that is generated by any $\text{KLR}$ $3$-cell. Thus, after some natural number $p$, this sequence would be generated by the other $3$-cells only. This would yield an infinite reduction sequence $(f_n)_{n \geq p}$ in $R''$, which is impossible by assumption.
\item Let us prove that $R''$ is terminating in the two remaining steps. First of all, let us consider the derivation $|| \cdot ||_{\{ \tau_{i,j}^+ , \tau_{i,j}^- \}_{i,j \in I}}$ into the trivial modulo $M_{\ast, \ast, \Z}$, counting the number of crossing generators in a given $2$-cell. Then for any $3$-cell $\delta$ belonging to $\{ \Ail, \Bil, \Cil, \Dil, E_{i,j,\lambda}, F_{i,j,\lambda} \}$, we get that $d( s_2 ( \delta)) > d(t_2 (\delta))$, and we prove in a same way that if the linear~$(3,2)$-polygraph $R'''$ defined as $R$ with only all $3$-cells implying bubbles as $3$-cells is terminating, then so is $R'$.
\item To prove that $R'''$ is terminating, we consider the derivation $d'$ into the trivial module $M_{\ast, \ast, \Z}$ defined for any $2$-cell $u$ in $\KLRb_2$ by
\[ d'(u) = \left\{
    \begin{array}{lll}
       \# \lbrace \text{bubbles in $u$} \rbrace + \sum\limits_{\text{$\pi$ clockwise oriented bubble in $u$}} \text{deg}(\pi) & \mbox{if } \text{$u$ contains bubbles}, \\
        0 & \mbox{if $u$ has no bubbles}, \\
        - \infty & \text{if $u=0$}.
    \end{array}
\right. \]
One then easily checks that
\[ d'( s_2(b_{i,\lambda}^1)) = d'( s_2 ( b_{i,\lambda}^{0,n})) = 1 + 2 (1 - \hil + n) > 0 = \text{max} \left(  d'( t_2(b_{i,\lambda}^1)), d'( t_2 ( b_{i,\lambda}^{0,n})) \right) \]
\[ d'( s_2(c_{i,\lambda}^1)) = d'( s_2 ( c_{i,\lambda}^{0,n})) = 1 > 0 = \text{max} \left(  d'( t_2(c_{i,\lambda}^1)), d'( t_2 ( c_{i,\lambda}^{0,n})) \right) \]
\[ d' (s_2 (\text{ig}_{\alpha} )) = d' \left(  \posbubdff{}{\hil -1 + \alpha} \right) = 1 + \alpha \; i \cdot i > 2 + (\alpha - l) i \cdot i = d' \left( \dbbubff{}{\hil -1 +\alpha -l}{- \hil -1 + l} \right) \]
since $l \geq 1$ and $i \cdot i = 2$.
\end{enumerate}
\end{proof}

\subsubsection{Weight functions}
\label{SSS:WeightFunctions}
Let $\mathcal{C}$ a linear~$(2,2)$-category. Recall from \cite{AL16} that a \emph{weight function} on $\mathcal{C}$ is a function $\tau$ from $\mathcal{C}_2$ to $\N$ such that
\begin{enumerate}[{\bf i)}]
\item $\tau (u \star_i v) = \tau (u) + \tau (v)$ for $i=0,1$ for any $i$-composable $2$-cells $u$ and $v$,
\item for each $2$-cell $u$ in $\mathcal{C}_2$,
\[ \tau (u) = \text{max} \{ \tau(u_i) \; | \; u_i \in \text{Supp}(u) \} \]
\end{enumerate}
Note that when $\mathcal{C}$ is presented by a linear~$(3,2)$-polygraph $P$, such a weight function is uniquely and entirely determined by its values on the generating $2$-cells of $P_2$. This enables to define a \emph{quasi-ordering} $\qord$ on $\KLRb_2^\ell$, that is a transitive and reflexive binary relation $\qord$ on $\KLRb_2^\ell$ \cite{DER87}, by $ u \qord v$ if $\tau(u) \geq \tau(v)$.

We define a weight function on $\KLRb_2^\ell$ by its following values on generating $2$-cells:
\vspace{-0.5cm}
\[ \tau (\: \raisebox{-1mm}{$\caplsl{}$} ) = \tau (\caprsl{} \: ) = \tau (  \cuprsl{} \: ) = \tau (\: \raisebox{-1mm}{$\cuplsl{}$} ) = 0, \; \tau ( \raisebox{2mm}{$\udottsl{}$} ) = \tau ( \raisebox{2mm}{$\ddottsl{}$} ) = 0, \;
\tau ( \raisebox{-3mm}{$\scalebox{0.6}{\crossingup{}{}}$} ) = \tau ( \raisebox{-3mm}{$\scalebox{0.6}{\crossingdn{}{}}$} ) = 3.
 \]
 Note that for any $3$-cell $\alpha$ in $E_3$, we have $\tau (s_2 (\alpha)) = \tau (t_2 (\alpha))$ so that the isotopy $3$-cells preserve this weight function. Then, starting with a monomial $u$ of $\KLRb_2^\ell$:
 \begin{itemize}
 \item[-] While $u$ can be rewritten with respect to $\ER$ into a $2$-cell $u'$ such that
 $\tau(u') < \tau(u)$, then assign $u$ to $u'$.
\item[-] While $u$ can be rewritten with respect to $\ER$ into a $2$-cell $u'$ without any of the $3$-cells depicted in \ref{SSS:Quasi-RedMonomials}, then assign $u$ to $u'$.
 \end{itemize}
 
From Lemma \ref{L:TermWithoutBubbleSLide} and well-foundedness of the quasi-ordering $\qord$, this procedure terminates and returns a linear combination of monomials in $\KLRb_2^\ell$ which are quasi-reduced, proving that $\ER$ is quasi-terminating.

\subsection{Confluence modulo}
\label{SS:ConfluenceModuloKLR}
We prove that $\ER$ is confluent modulo $E$ by proving that it is decreasing modulo $E$, following \cite[Theorem 2.3.8]{DUP19}. To prove that it is decreasing, we use \cite[Proposition 2.4.4]{DUP19}, and prove that all critical branchings of the form $(f,g)$, where $f$ is a positive $3$-cell in $\So$ and $g$ is a positive $3$-cell in $\Ro$ are decreasingly confluent for the quasi-normal form labelling $\psiqnf$. Let us provide an exhaustive list of such critical branchings, and prove that these are all confluent modulo $E$, and decreasing with respect to $\psiqnf$. Let us at first notice that the branching implying $3$-cells $b_{i,\lambda}^{k,n}$, $b_{i,\lambda}^{k,n}$ and $I_{\alpha}$ for $k=0,1$ and $\alpha > 0$ are trivially confluent by definition of bubbles with a negative number of dots and the Infinite Grassmanian relation. Notice also that the bubble slide $3$-cells does not overlap with the other $3$-cells implying bubbles since their sources are bubbles with positive degrees by definition. Let us now study the remaining critical branchings, that we split into two sets: those implying the KLR $3$-cells and the remaining branchings between $3$-cells $A_{i,\lambda}$-$F_{i,\lambda}$.

\subsubsection{Critical branchings from KLR relations}
First of all, we have to consider all the the critical branchings of the linear~$(3,2)$-polygraph $\text{KLR}$ presenting the KLR algebra for both downward and upward orientation of strands. These are all confluent from \ref{SSS:CriticalBranchingsKLRAlgebra} and Appendix \ref{A:KLRCriticalBranchings}.
 These $3$-cells provide the following critical branchings of $\ER$ modulo $E$:
\[
(A_{i,\lambda}, \alpha_{i,\lambda}^{L,+}),
\; (B_{i,\lambda}, i_4^2 \cdot \alpha_{i,\lambda}^{L,+}), \;
(C_{i,\lambda}, i_3^2, \alpha_{i,\lambda}^{R,+}), \;
(D_{i,\lambda}, \alpha_{i,\lambda}^{R,+}), \;
(E_{i,\lambda}, \alpha_{i,\lambda}^{L,+}), \;
(F_{i,\lambda}, \alpha_{i,\lambda}^{R,+}).
\]
for any value of $\hil$, of respective sources 
\[ \raisebox{-2mm}{$\cbadot{i}$} \quad \raisebox{-4mm}{$\cbbdot{i}$} \quad \raisebox{-4mm}{$\cbcdot{i}$} \quad \raisebox{-2mm}{$\cbddot{i}$} \quad \cbedot{i}{i} \quad \cbfdot{i}{i}  \]
There are also critical branchings coming from isotopy given by 
\[ (\beta_{i,j}^{\lambda,+}, (i_1^0 \star_2 i_4^0)^- \cdot F_{i,j,\lambda}), \quad (\alpha_{i,\lambda}^{R,+}, (i_1^0 \star_2 i_4^0)^- \star_2 i_3^2 \star_2 i_1^2 \cdot F_{i,j,\lambda}), \quad
	( \gamma_{j,i,j}^{\lambda,+} , (i_1^0 \star_2 i_4^0)^- \cdot F_{i,j,\lambda})
\]
of respective sources 
\[ \isocba{i}{j} \raisebox{-4mm}{$\equiv_{\tck{E}}$} \isocbb{i}{j}, \quad \isocbadot{i}{j} \raisebox{-4mm}{$\equiv_{\tck{E}}$} \isocbbdot{i}{j}, \quad \isocbacr{i}{j}{i} \raisebox{-4mm}{$\equiv_{\tck{E}}$} \isocbbcr{i}{j}{i}   \]
Similarly, there are critical branchings of the form
\[ (\beta_{i,j}^{\lambda,+} , (i_1^0 \star_2 i_4^0)^- \cdot E_{i,j,\lambda}), \quad (\alpha_{i,\lambda}^{L,+}, (i_1^0 \star_2 i_4^0)^- \star_2 (i_2^2 \star_2 i_4^2)^- \cdot E_{i,j,\lambda}).
\]
All these branchings are proved confluent modulo $E$ with respect to $\ER$ in \ref{A:BranchingsDots}. Besides, it is clear that each rewriting step drawn in the confluence diagrams in \ref{A:BranchingsDots} make the distance to a quasi-normal form decrease by $1$, proving decreasing confluence of these critical branchings for $\psiqnf$.

\subsubsection{Critical branchings between $3$-cells $A-F$}
\label{SSS:ClassificationOfBranchingsAF}
 Let us now classify critical branchings between the $3$-cells $A_{i,\lambda}$, $B_{i,\lambda}$, $C_{i,\lambda}$. We denote at first that if $i,j \in I$ with $i \ne j$, there are two critical branchings given by $(E_{i,j,\lambda},F_{i,j,\lambda})$ and $(F_{i,j,\lambda},E_{i,j,\lambda})$ which are trivially confluent.
When both strands are labelled by the same vertex $i$, the $3$-cells $E_{i,\lambda}$ and $F_{i,\lambda}$ overlap with the $\msl$ $3$-cells, and we describe below a way to list these overlappings, depending on the notion of \emph{type} of a $2$-cell.

\begin{definition}
For any $2$-cell $u$ in $\mathcal{KLR}_2$, we define the \textit{type} of $u$ as follows:
\begin{enumerate}[{\bf i)}]
\item If $u$ has a $1$-source (resp. $1$-target) $\mathcal{E}$ and an identity $1$-cell as target (resp. source), that is if $u$ is represented by a closed diagram at its top (resp. at its bottom), we set the type of $D$ to be $$\text{sgn}(\mathcal{E})^d \quad (\text{resp. $\text{sgn}(\mathcal{E})^u$}),$$
where $\text{sgn}(\mathcal{E})$ depicts the sequence of signs appearing in $\mathcal{E}$.
\item If $u$ is a $2$-cell in $\mathcal{KLR}_2$ between two non-identity $1$-cells, then the type of $u$ is given by two elements $\text{sgn}(\mathcal{E})^d$ and $\text{sgn}(\mathcal{F})^u$
\end{enumerate}
For instance, the following diagrams have respectively for type $(+,-)^d$ and $(-,+)^d,(-,+)^u:= (-,+)^{u,d}$:
\begin{align*}
\raisebox{2mm}{$\tfishur{i}$}, \qquad \qquad \raisebox{5mm}{$\tdcrosslr{i}{i}$} .
\end{align*}
\end{definition}

Moreover, all the $3$-cells named by a letter $A$ have the same type $(-,+)^u$, we thus call it type $A$. We do the same thing for the other $3$-cells and we recover the different types for our $3$-cells in an array: \\
\begin{center}
\begin{tabular}{|c|c|}
\hline 
Type of the $3$-cell & Type of the diagram \\
\hline
$A$ & $(-,+)^u$ \\
\hline
$B$ & $(-,+)^d$ \\
\hline
$C$ & $(+,-)^d$ \\ 
\hline
$D$  & $(+,-)^u$ \\
\hline 
$E$ & $(-,+)^{d,u}$ \\
\hline
$F$ & $(+,-)^{d,u}$ \\
\hline
\end{tabular}
\end{center}

There is a critical branching between two such relations if and only if they overlap on an element $\tleftcross{}{}$ or $\trightcross{}{}$. Thus, we can notice that there is a branching only between $3$-cells of opposed type, that is in which we reverse all the signs and we change the orientation. For instance, there is a branching between $A$ and $C$ whose source is:
\begin{align*}
\bcritac{i}
\end{align*}

\noindent Following this observation, the pairs of $3$-cells that lead to a critical branching are:
\[ (\Cil, \Ail), \: (\Fil, \Ail), \: (\Bil, \Dil), \: (\Bil, \Fil), \: (\Cil, \Eil), \: (\Eil, \Dil), \: (\Eil, \Fil), \: (\Fil, \Eil) \]
for any $i$ in $I$, any $\lambda$ in $X$ and any possible value of $\hil$.
We check that all these critical branchings are confluent modulo $E$, all the drawings are given in the appendix \ref{A:IsoCriticalBranchings}.

\subsection{Categorification of quantum groups}
\label{SS:CategorificationQuantumGroups}
In this section, we prove using rewriting that the generating set that Khovanov and Lauda expected to be a basis is actually a basis, relating this generating set to a set of quasi-normal forms for the linear~$(3,2)$-polygraph $\ER$ defined from $\KLRb$. As an immediate consequence of the results of \cite{KL3}, we obtain that the linear~ $(2,2)$-category $\Ug$ is a categorification of the quantum group $\dot{\mathbf{U}}_q (\mathfrak{g})$ associated with a symmetrizable Kac-Moody algebra $\mathfrak{g}$ whose Dynkin diagram $\Gamma$ is a simply-laced graph.

\subsubsection{Khovanov-Lauda's generating set}
In \cite{KL3}, Khovanov and Lauda described a general generating set for the vector space $\Ug (E_{\textbf{i}} \one , E_{\textbf{j}} \one)$, for any $\textbf{i}$ and $\textbf{j}$ in $\SSv$, and $\lambda$ in $X$. To define this set, consider $m$ points (resp. $n$ points) on the lower (resp. upper) boundary $\R \times \{ 0 \}$ (resp. $\R \times \{ 1 \}$) of the planar strip $\R \times [0,1]$, with $m+n$ even, and choose an immersion of $\frac{n+m}{2}$ strands into the strip $\R \times [0,1]$ having these points as endpoints. We say that a strand is a \emph{through strand} if it links an endpoint of $R \times \{ 0 \}$ to an endpoint of $\R \times \{ 1 \}$. We fix and orientation and a label for each of this strands, so that any endpoint inherit a label from the strand he is linked to, and a sign which is $+$ if the strand is upward oriented when reaching the endpoint, $-$ otherwise. These orientations and labels on the upper (resp. the lower) endpoints then define signed sequences $\textbf{i}$ and $\textbf{j}$ in $\SSv$. These immersions between $\textbf{i}$ and $\textbf{j}$ are defined modulo boundary-preserving homotopies, and are called \emph{$(\textbf{i}, \textbf{j})$-pairings}. We will consider \emph{minimal} $(\textbf{i},\textbf{j})$-pairings, that is such pairings in which strands have no self-intersections and any two strands intersect at most once. 

Any $(\textbf{i},\textbf{j})$-pairing has a minimal diagram, and we denote by $p (\textbf{i},\textbf{j})$ a set of fixed minimal $(i,j)$-pairing $\tilde{u}$ for any $(\textbf{i},\textbf{j})$-pairing $u$. Let us also denote by $\Pi_{\lambda}$ the set of $2$-cells $\Ug (\one, \one)$ containing all products of clockwise and counterclockwise oriented bubbles with exterior region labelled by $\lambda$, having an arbitrary number of dots on it and such that the degree of each bubble is positive. Following \cite{KL3}, we consider a set $\mathcal{B}_{\mathbf{i},\mathbf{j},\lambda}$ consisting of the union, over all $u$ in $p (\textbf{i} , \textbf{j})$, of diagrams built out of $u$ by fixing a choice of an interval on each strand, away from the intersections, and placing an arbitrary number of dots on each of this intervals, and placing any diagram representing a monomial in $\Pi_{\lambda}$ to the right of this new diagram. Khovanov and Lauda proved that this space spans the $\mathbb{K}$-vector space $\Ug (E_{\textbf{i}} \one , E_{\textbf{j}} \one)$.

\subsubsection{Monomials in quasi-normal form}
\label{SSS:MonQuasiNF}
In this section, we will fix a particular set of monomials in quasi-normal forms for the linear~$(3,2)$-polygraph $\ER$. Before defining this set, let us expand a few remarks on reductions of $2$-cells using rewriting modulo with respect to $\ER$, allowing to change a diagram up to isotopy to apply $3$-cells of $\KLRb$.
\begin{enumerate}[{\bf a)}]
\item Note that a $2$-cell $u$ can be reduced into a linear combination of diagrams in which all $2$-cells have positive degree, using the infinite Grassmannian $3$-cell and the degree condition $3$-cells.
\item A $2$-cell $u$ containing bubbles can be reduced into a linear combination of $2$-cells $u'$ in which all the bubbles moved to the rightmost region using the bubble slide relations.
\item If a $2$-cell $u$ contains a strand that intersect twice with another strand, one can use isotopies and $3$-cells $\Eil$, $\Fil$ or $\beta_{i,j,\lambda}^{\pm}$ to remove these intersections. As a consequence, two different strands can intersect at most once.
\item If a $2$-cell contains a non through strand that intersect with itself, one can use isotopies and $3$-cells $\Ail$ (or $\Bil, \Cil, \Dil$) on the part of the diagram next to the intersection to remove this intersection. 
\item If a $2$-cell contains a through strand with dots on it, the dots can be moved to the bottom of the strand using the KLR $3$-cells $\alpha_{i,\lambda}^{L, \pm}$.
\item If a $2$-cell contains a non through strand with a dot on it, and this strand does not intersect with another strand, the dot can be placed anywhere. Taking the normal form will respect to $E$ will then make the dot move to the right.
\item If this non-through strand intersect with another strand, we are in one of the following situations:
\[ \cupstrand \quad \cupcup \]
or the mirror image of it through the anti-involution $\T$, for any orientation and labels on strands. In the first case, if the dot is placed on the left of the cup, it can be moved to the right using isotopy and the $3$-cell $\alpha_{i,j,\lambda}^{L,\pm}$. In the second situation, if the dot is placed on the leftmost cup (resp. on the rightmost cup), it can be reduced with the KLR $3$-cell $\alpha_{i,j}^{L,\pm}$ (or just an identity if the dot is already in the good position) in 
\[ \cupcupdota, \qquad \text{(resp. $\cupcupdotb$ \; )} \]
\end{enumerate}

As a consequence, one can choose a set of $E$-normal forms of quasi-normal forms with respect to $\ER$ containing $2$-cells in $\KLRb_2$ having: all bubbles placed in the rightmost region and all dots placed to the right of a bubble, a minimal number of crossings and crossings moved as far as possible to the right using the Yang-Baxter $3$-cells $\gamma_{i,j,\lambda}^{\pm}$, no strands with self-intersection and no double intersections between two different strands, dots placed on the bottom on every through strand and on the rightmost part of every non-through strand. This choice of set of quasi-normal forms correspond to a particular set $\mathcal{B}_{\mathbf{i},\mathbf{j},\lambda}$ of Khovanov and Lauda. As a consequence of \cite[Theorem 2.5.6]{DUP19}, we get the following result:
\begin{theorem}
\label{T:BasisKLCategory}
The set $\mathcal{B}_{i,j,\lambda}$ defined above is a linear basis of $\Ug (E_{\mathbf{i}} \one, E_{\mathbf{j}} \one)$.
\end{theorem}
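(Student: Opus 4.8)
The plan is to deduce the statement from the abstract hom-basis criterion for confluence modulo, \cite[Theorem 2.5.6]{DUP19}, whose hypotheses have all been assembled in the preceding sections. Recall that $\KLRb$ presents the linear~$(2,2)$-category $\widehat{\Ag}$ and that $(E,R)$ is a convergent splitting of $\KLRb$ (Subsection \ref{SSS:SplittingKLR}). By Theorem \ref{T:Quasi-ConvModKLR}, the linear~$(3,2)$-polygraph modulo $(R,E,\ER)$ is quasi-terminating and confluent modulo $E$. These are exactly the hypotheses of \cite[Theorem 2.5.6]{DUP19}, which therefore asserts that, having fixed a quasi-normal form $\cl{u}$ with respect to $\ER$ for each $2$-cell $u$, the family of monomials appearing in the support of the $E$-normal form of these quasi-normal forms constitutes a hom-basis of $\widehat{\Ag}$. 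Restricting to the $1$-cells $E_{\mathbf{i}}\one$ and $E_{\mathbf{j}}\one$ gives a linear basis of $\widehat{\Ag}(E_{\mathbf{i}}\one, E_{\mathbf{j}}\one)$ for every $\mathbf{i},\mathbf{j}$ in $\SSv$ and every $\lambda$ in $X$.

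First I would verify that the reductions listed in Subsection \ref{SSS:MonQuasiNF} realise a consistent choice of quasi-normal forms whose $E$-normal forms are precisely the diagrams composing $\mathcal{B}_{\mathbf{i},\mathbf{j},\lambda}$. Starting from an arbitrary monomial, items a)--b) use the Infinite-Grassmannian and degree-condition $3$-cells together with the bubble-slide $3$-cells to bring every $2$-cell to positive degree and to push all bubbles into the rightmost region with their dots placed to their right, yielding a monomial in $\Pi_\lambda$ on the right; items c)--d) use the invertibility $3$-cells $\Eil,\Fil$, the $3$-cells $\beta_{i,j}^{\pm}$ and the $\mathfrak{sl}_2$ $3$-cells $\Ail$--$\Dil$ to remove double intersections between distinct strands and self-intersections of non-through strands; and items e)--g) use the KLR $3$-cells $\alpha_{i,\lambda}^{L,\pm}$ and the Yang-Baxter $3$-cells $\gamma_{i,j,\lambda}^{\pm}$ to move crossings as far right as possible and to fix dot positions at the bottom of through strands and on the rightmost interval of non-through strands.

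The heart of the argument is the identification of the resulting quasi-normal forms with $\mathcal{B}_{\mathbf{i},\mathbf{j},\lambda}$. I would set up a bijection sending such a quasi-normal form to the datum of: its underlying crossing diagram, which by c)--d) has no self-intersections and at most one intersection between any two strands, hence is a minimal $(\mathbf{i},\mathbf{j})$-pairing; a choice of dots on the prescribed intervals of the strands; and the rightmost product of positive-degree bubbles, that is a monomial in $\Pi_\lambda$. This is exactly the data used by Khovanov and Lauda to build $\mathcal{B}_{\mathbf{i},\mathbf{j},\lambda}$, once a set $p(\mathbf{i},\mathbf{j})$ of minimal pairing representatives is fixed compatibly with the crossing normal forms. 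Since the abstract theorem guarantees that these quasi-normal forms are linearly independent and span $\widehat{\Ag}(E_{\mathbf{i}}\one, E_{\mathbf{j}}\one)$, and since Khovanov and Lauda proved in \cite{KL3} that $\mathcal{B}_{\mathbf{i},\mathbf{j},\lambda}$ spans $\Ug(E_{\mathbf{i}}\one,E_{\mathbf{j}}\one)$, the bijection upgrades this spanning set to a basis. Finally, Brundan's isomorphism \cite{BRU15} is the identity on $0$- and $1$-cells and an isomorphism on each space of $2$-cells, and the passage from $\Ag$ to $\widehat{\Ag}$ only forgets the grading and direct-sum structure without altering the hom-space attached to the monomial $1$-cells $E_{\mathbf{i}}\one$, $E_{\mathbf{j}}\one$; transporting the basis through this isomorphism yields the claim for $\Ug$.

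I expect the delicate point to be this last identification. The hard part will be verifying that the choice of quasi-normal forms, which is only canonical modulo the rewriting cycles coming from the bubble-slide $3$-cells and from the pivotal cycles (\ref{E:PivotalCycle}) of indexed isotopies, can be fixed so as to match Khovanov and Lauda's conventions for minimal pairings, dot intervals, and the ordering of bubbles. In particular, one must ensure that distinct quasi-normal forms are sent to distinct elements of $\mathcal{B}_{\mathbf{i},\mathbf{j},\lambda}$ and conversely, so that the two families agree strand by strand; the bookkeeping of dot positions around the cups and caps, treated case by case in items f)--g) via the anti-involution $\T$, is where the correspondence is most easily broken and must be checked with care.
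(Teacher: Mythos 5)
Your proposal is correct and follows essentially the paper's own proof: both apply \cite[Theorem 2.5.6]{DUP19} to the quasi-terminating polygraph modulo $(R,E,\ER)$, which is confluent modulo $E$ by Theorem \ref{T:Quasi-ConvModKLR}, and then identify the quasi-normal forms produced by the reductions of \ref{SSS:MonQuasiNF} with Khovanov and Lauda's set $\mathcal{B}_{\mathbf{i},\mathbf{j},\lambda}$. The delicate point you flag --- that the choice of quasi-normal forms must be fixed so that the two families coincide exactly, strand by strand and bubble by bubble --- is precisely the step the paper settles by asserting the correspondence after listing the reductions a)--g), so the two arguments coincide.
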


\subsubsection{Categorification of quantum groups}
In \cite{KL3}, Khovanov and Lauda defined a map $\gamma$ between Lusztig's idempotent and integral form $\dot{\mathbf{U}} (\mathfrak{g})$ defined in \cite{LUS10} of the quantum group $\mathbf{U}_q(\mathfrak{g})$ associated with a symmetrizable Kac-Moody algebra and the Grothendieck group of the (additive) linear~$(2,2)$-category $\Ug$. They established that this map is surjective for any Kac-Moody algebra $\mathfrak{g}$ and any field $\K$. However, the injectivity of $\gamma$ holds if and only if the graphical calculus they introduce is non-degenerate, which is equivalent to the fact that the generating set $\mathcal{B}_{\mathbf{i},\mathbf{j},\lambda}$ is a linear basis of the $\mathbb{K}$-vector space of $2$-cells $\Ug (E_{\mathbf{i}} \one, E_{\mathbf{j}} \one)$ for any $\mathbf{i}$ and $\mathbf{j}$ in $\SSv$. From Theorem \ref{T:BasisKLCategory}, this is true for any Kac-Moody algebra $\mathfrak{g}$ defined from a simply-laced Cartan datum, namely for any Kac-Moody algebra having a simply-laced Dynkin Diagram, so we obtain as a corollary the following result:
\begin{corollary}
For a Kac-Moody algebra $\mathfrak{g}$ defined by a simply-laced Cartan datum, the linear~$(2,2)$-category $\Ug$ is a categorification of $\dot{\mathbf{U}} (\mathfrak{g})$.
\end{corollary}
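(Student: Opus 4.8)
The plan is to deduce the corollary directly from the basis theorem (Theorem \ref{T:BasisKLCategory}) together with Khovanov and Lauda's structural results on their comparison homomorphism, treating Theorems \ref{T:Quasi-ConvModKLR} and \ref{T:BasisKLCategory} as already established. First I would pin down precisely what "categorification" means in this statement: one forms the split Grothendieck group $K_0(\Ug)$ of the graded, additive, Karoubian $2$-category $\Ug$, which carries a natural $\Zq$-algebra structure coming from $1$-composition and the grading shift, and one considers Khovanov and Lauda's $\Zq$-algebra homomorphism $\gamma \maps \dot{\mathbf{U}}(\mathfrak{g}) \to K_0(\Ug)$ from Lusztig's idempotent, integral form. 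The assertion to prove is that $\gamma$ is an isomorphism.

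Next I would invoke the two inputs from \cite{KL3} that are logically independent of the rewriting machinery of this paper. On one hand, $\gamma$ is surjective for every symmetrizable Kac-Moody algebra $\mathfrak{g}$ and every ground field $\K$; this half of \cite{KL3}[Theorem 1.1] requires no non-degeneracy hypothesis and may be cited as is. On the other hand, \cite{KL3} proves that $\gamma$ is injective \emph{if and only if} the diagrammatic calculus is non-degenerate, that is, if and only if for all signed sequences $\mathbf{i}, \mathbf{j} \in \SSv$ and all $\lambda \in X$ the conjectured generating set $\mathcal{B}_{\mathbf{i},\mathbf{j},\lambda}$ is in fact a linear basis of the hom-space $\Ug(E_{\mathbf{i}}\one, E_{\mathbf{j}}\one)$. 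Thus the entire content of the corollary reduces to verifying this non-degeneracy in the simply-laced case.

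The remaining step is to supply exactly that. Here I would apply Theorem \ref{T:BasisKLCategory}, which asserts precisely that $\mathcal{B}_{\mathbf{i},\mathbf{j},\lambda}$ is a linear basis of $\Ug(E_{\mathbf{i}}\one, E_{\mathbf{j}}\one)$ for all $\mathbf{i},\mathbf{j} \in \SSv$ and all $\lambda$, these bases arising as the $E$-normal forms of the quasi-normal forms for $\ER$ produced by Theorem \ref{T:Quasi-ConvModKLR}. Since Theorem \ref{T:BasisKLCategory} is stated and proved for $\Ug$ via Brundan's isomorphism with $\Ag$ (\cite{BRU15}), no further passage between the $2$-categories is needed, and the non-degeneracy holds for every $\mathfrak{g}$ defined from a simply-laced Cartan datum. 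Combining the injectivity coming from non-degeneracy with the unconditional surjectivity then shows $\gamma$ is bijective, hence an isomorphism of $\Zq$-algebras, which is exactly the statement that $\Ug$ categorifies $\dot{\mathbf{U}}(\mathfrak{g})$.

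Because the argument is essentially a citation chain, the only genuine subtlety — and the point I would take care to make explicit — is conventional rather than computational: one must confirm that the family $\mathcal{B}_{\mathbf{i},\mathbf{j},\lambda}$ delivered by Theorem \ref{T:BasisKLCategory} coincides term for term with the generating set entering Khovanov and Lauda's non-degeneracy criterion, and that the Grothendieck-group formulation in \cite{KL3} is phrased relative to the same pivotal $2$-category $\Ug$ rather than $\Ag$ or $\widehat{\Ag}$. Both are already secured by \ref{SSS:MonQuasiNF}, where the chosen quasi-normal forms are identified with a minimal $(\mathbf{i},\mathbf{j})$-pairing diagram decorated by dots and a rightmost product of bubbles in $\Pi_\lambda$, which is exactly Khovanov and Lauda's description of $\mathcal{B}_{\mathbf{i},\mathbf{j},\lambda}$. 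The main obstacle is therefore bookkeeping: checking that no degrees of freedom — grading shifts, the idempotent completion used to form $K_0$, or the choice of pairing representatives — are silently lost when one transports the rewriting-theoretic basis to the Grothendieck-group statement.
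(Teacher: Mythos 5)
Your proposal is correct and follows essentially the same argument as the paper: cite Khovanov--Lauda's unconditional surjectivity of $\gamma$ and their equivalence between injectivity of $\gamma$ and non-degeneracy of the diagrammatic calculus, then supply non-degeneracy in the simply-laced case from Theorem \ref{T:BasisKLCategory}. The bookkeeping point you raise — that the rewriting-theoretic bases coincide with Khovanov--Lauda's sets $\mathcal{B}_{\mathbf{i},\mathbf{j},\lambda}$ — is exactly what the paper establishes in \ref{SSS:MonQuasiNF}, so no further verification is needed.
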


%

\begin{small}
\renewcommand{\refname}{\Large\textsc{References}}
\bibliographystyle{plain}
\bibliography{Bibliography}

\begin{thebibliography}{10}

\bibitem{AL16}
Cl\'ement {Alleaume}.
\newblock {Rewriting in higher dimensional linear categories and application to
  the affine oriented Brauer category}.
\newblock {\em arXiv:1603.02592}, March 2016.

\bibitem{ALPhD}
Clement Alleaume.
\newblock {\em {Higher-dimensional linear rewriting and coherence in
  categorification and representation theory}}.
\newblock Theses, {Universit{\'e} de Lyon}, June 2018.

\bibitem{BRU15}
Jonathan {Brundan}.
\newblock {On the definition of Kac-Moody 2-category}.
\newblock {\em arXiv:1501.00350}, January 2015.

\bibitem{BUR93}
Albert Burroni.
\newblock Higher-dimensional word problems with applications to equational
  logic.
\newblock {\em Theoret. Comput. Sci.}, 115(1):43--62, 1993.
\newblock 4th Summer Conference on Category Theory and Computer Science (Paris,
  1991).

\bibitem{CDM19}
Cyrille Chenavier, Benjamin Dupont, and Philippe Malbos.
\newblock {Algebraic polygraphs modulo and linear rewriting}.
\newblock in preparation, 2019.

\bibitem{CKS00}
J.~Robin~B. Cockett, Jurgen Koslowski, and Robert A.~G. Seely.
\newblock Introduction to linear bicategories.
\newblock {\em Math. Structures Comput. Sci.}, 10(2):165--203, 2000.
\newblock The Lambek Festschrift: mathematical structures in computer science
  (Montreal, QC, 1997).

\bibitem{DER87}
Nachum Dershowitz.
\newblock Termination of rewriting.
\newblock {\em J. Symb. Comput.}, 3(1-2):69--115, February 1987.

\bibitem{DUP19}
Benjamin Dupont.
\newblock {Rewriting modulo isotopies in pivotal linear $(2,2)$-categories}.
\newblock preprint, arXiv:1906.03904, 2019.

\bibitem{DMpp18}
Benjamin Dupont and Philippe Malbos.
\newblock {Coherent confluence modulo relations and double groupoids}.
\newblock preprint arXiv:1810.08184, Hal-01898868, submitted, 2018.

\bibitem{GHM17}
Yves Guiraud, Eric Hoffbeck, and Philippe Malbos.
\newblock {Convergent presentations and polygraphic resolutions of associative
  algebras}.
\newblock {\em arxiv:1406.0815, to appear in Math. Z.}, 2019.

\bibitem{GM09}
Yves Guiraud and Philippe Malbos.
\newblock Higher-dimensional categories with finite derivation type.
\newblock {\em Theory Appl. Categ.}, 22:No. 18, 420--478, 2009.

\bibitem{GM12}
Yves Guiraud and Philippe Malbos.
\newblock Higher-dimensional normalisation strategies for acyclicity.
\newblock {\em Adv. Math.}, 231(3-4):2294--2351, 2012.

\bibitem{GM18}
Yves Guiraud and Philippe Malbos.
\newblock Polygraphs of finite derivation type.
\newblock {\em Math. Structures Comput. Sci.}, 28(2):155--201, 2018.

\bibitem{HAD17}
Amar {Hadzilhasanovic}.
\newblock {The algebra of entanglement and the geometry of composition}.
\newblock {\em arXiv:1709.08086}, 2017.

\bibitem{Huet80}
G{\'e}rard Huet.
\newblock Confluent reductions: abstract properties and applications to term
  rewriting systems.
\newblock {\em J. Assoc. Comput. Mach.}, 27(4):797--821, 1980.

\bibitem{JouannaudKirchner84}
Jean-Pierre Jouannaud and Helene Kirchner.
\newblock Completion of a set of rules modulo a set of equations.
\newblock In {\em Proceedings of the 11th ACM SIGACT-SIGPLAN Symposium on
  Principles of Programming Languages}, POPL '84, pages 83--92, New York, NY,
  USA, 1984. ACM.

\bibitem{KAC90}
Victor~G. Kac.
\newblock {\em Infinite-dimensional {L}ie algebras}.
\newblock Cambridge University Press, Cambridge, third edition, 1990.

\bibitem{KK12}
Seok-Jin Kang and Masaki Kashiwara.
\newblock Categorification of highest weight modules via
  {K}hovanov-{L}auda-{R}ouquier algebras.
\newblock {\em Invent. Math.}, 190(3):699--742, 2012.

\bibitem{KL3}
M.~{Khovanov} and A.~D. {Lauda}.
\newblock {A diagrammatic approach to categorification of quantum groups III}.
\newblock {\em Quantum Topology, Vol 1, Issue 1, 2010, pp. 1-92}, July 2008.

\bibitem{KL1}
Mikhail Khovanov and Aaron~D. Lauda.
\newblock A diagrammatic approach to categorification of quantum groups. {I}.
\newblock {\em Represent. Theory}, 13:309--347, 2009.

\bibitem{KL2}
Mikhail Khovanov and Aaron~D. Lauda.
\newblock A diagrammatic approach to categorification of quantum groups {II}.
\newblock {\em Trans. Amer. Math. Soc.}, 363(5):2685--2700, 2011.

\bibitem{LAF07}
Yves Lafont.
\newblock Algebra and geometry of rewriting.
\newblock {\em Appl. Categ. Structures}, 15(4):415--437, 2007.

\bibitem{LAU12}
Aaron~D. Lauda.
\newblock An introduction to diagrammatic algebra and categorified quantum
  {${\rm sl}(2)$}.
\newblock {\em Bull. Inst. Math. Acad. Sin. (N.S.)}, 7(2):165--270, 2012.

\bibitem{LUC17}
Maxime Lucas.
\newblock {A coherence theorem for pseudonatural transformations}.
\newblock {\em {Journal of Pure and Applied Algebra}}, 221(5):1146--1217, 2017.

\bibitem{LUS10}
George Lusztig.
\newblock {\em Introduction to quantum groups}.
\newblock Modern Birkh\"auser Classics. Birkh\"auser/Springer, New York, 2010.
\newblock Reprint of the 1994 edition.

\bibitem{MET03}
Fran\c{c}ois M\'{e}tayer.
\newblock Resolutions by polygraphs.
\newblock {\em Theory Appl. Categ.}, 11:No. 7, 148--184, 2003.

\bibitem{MET08}
Fran{\c{c}}ois M{\'e}tayer.
\newblock Cofibrant objects among higher-dimensional categories.
\newblock {\em Homology Homotopy Appl.}, 10(1):181--203, 2008.

\bibitem{MIM10}
Samuel Mimram.
\newblock Computing critical pairs in 2-dimensional rewriting systems.
\newblock In {\em R{TA} 2010: {P}roceedings of the 21st {I}nternational
  {C}onference on {R}ewriting {T}echniques and {A}pplications}, volume~6 of
  {\em LIPIcs. Leibniz Int. Proc. Inform.}, pages 227--241. Schloss Dagstuhl.
  Leibniz-Zent. Inform., Wadern, 2010.

\bibitem{ROU08}
R.~{Rouquier}.
\newblock {2-Kac-Moody algebras}.
\newblock {\em arXiv:0812.5023}, December 2008.

\bibitem{SAV18}
Alistair Savage.
\newblock String diagrams and categorification.
\newblock preprint arXiv:1806.06873, 2018.

\bibitem{STR86}
Ross Street.
\newblock Limits indexed by category-valued {$2$}-functors.
\newblock {\em J. Pure Appl. Algebra}, 8(2):149--181, 1976.

\bibitem{STR87}
Ross Street.
\newblock The algebra of oriented simplexes.
\newblock {\em J. Pure Appl. Algebra}, 49(3):283--335, 1987.

\bibitem{VOO94}
Vincent van Oostrom.
\newblock Confluence by decreasing diagrams.
\newblock {\em Theor. Comput. Sci.}, 126(2):259--280, April 1994.

\bibitem{WEB13}
Ben Webster.
\newblock Knot invariants and higher representation theory.
\newblock {\em Mem. Amer. Math. Soc.}, 250(1191):v+141, 2017.

\end{thebibliography}
\end{small}

\clearpage

\csname @addtoreset\endcsname{section}{part} 
\csname @addtoreset\endcsname{subsection}{part}

\appendix
\setcounter{secnumdepth}{2}

\newgeometry{top=2.3cm, bottom=2.2cm, left=2.5cm , right=2.5cm}

\section{Critical branchings for the linear~$(3,2)$-polygraph $\textrm{KLR}$}
\label{A:KLRCriticalBranchings}
In this section, we will draw all the diagram corresponding to the given list of critical branchings for the linear~$(3,2)$-polygraph KLR.

\begin{itemize}
	\item[A)] \textbf{Crossings with two dots}
\end{itemize}
\[
\scalebox{0.9}{\xymatrix@-3ex@R=0.8em{ & \raisebox{-6mm}{$\ducross{i}{j}$} \ar@3[dr] &               \\
		\raisebox{-6mm}{$\dcrossudl{i}{j}$} \ar@3[ur] \ar@3[dr] &  &  \raisebox{-6mm}{$\ddcross{i}{j}$} \\
		&  \raisebox{-6mm}{$\dcrossudr{i}{j}$}  \ar@3[ur] & } \hspace{1.5cm} \scalebox{0.9}{\xymatrix@-3ex@R=0.8em{ & \raisebox{-6mm}{$\dcrossudl{i}{i}$} - \raisebox{-6mm}{$\didl{i}{i}$} \ar@3[r] & \raisebox{-6mm}{$\dcrossudl{i}{i}$} - \raisebox{-6mm}{$\didl{i}{i}$} + \raisebox{-6mm}{$\didl{i}{i}$} \ar@3[dr] &               \\
		\raisebox{-6mm}{$\ducross{i}{i}$} \ar@3[ur] \ar@3[dr] & & &  \raisebox{-6mm}{$\ddcross{i}{i}$} \\
		&   \raisebox{-6mm}{$\dcrossudr{i}{i}$} + \raisebox{-6mm}{$\didr{i}{i}$} \ar@3[r] &  \raisebox{-6mm}{$\dcrossudr{i}{i}$} + \raisebox{-6mm}{$\didr{i}{i}$} - \raisebox{-6mm}{$\didr{i}{i}$} \ar@3[ur] &  }}} \]

\begin{itemize}
	\item[B)] \textbf{Triple crossings}
\end{itemize}
\vskip-5pt
\[ \scalebox{0.811}{\xymatrix@-3ex@R=0.7em{ & \raisebox{-5mm}{$\dcrossnodot{i}{j}$} \ar@{=}[dd] \\
		\raisebox{-7mm}{$\triplecross{i}{j}$} \ar@3[ur] \ar@3[dr] &  \\
		& \raisebox{-5mm}{$\dcrossnodot{i}{j}$} } \hspace{2cm}
	\xymatrix@-2ex@R=0.7em{  & \raisebox{-6mm}{$\dcrossul{i}{j}$} + \raisebox{-6mm}{$\dcrossur{i}{j}$} \ar@3[dr] &  \\
		\raisebox{-7mm}{$\triplecross{i}{j}$} \ar@3[ur] \ar@3[dr] & & \raisebox{-6mm}{$\dcrossdr{i}{j}$} + \raisebox{-6mm}{$\dcrossdl{i}{j}$} \\
		& \raisebox{-6mm}{$\dcrossdl{i}{j}$} + \raisebox{-6mm}{$\dcrossdr{i}{j}$} \ar@{=}[ur] &  } \hspace{2cm}
	\xymatrix@-2ex@R=0.7em{ & 0  \\
		\raisebox{-8mm}{$\triplecross{i}{i}$} \ar@3[ur] \ar@3[dr] & \\    & 0 }} \]
respectively when $i \cdot j = 0$, $i \cdot j = -1$ and $i \ne j$.

\begin{itemize}
	\item[C)] \textbf{Double crossings with dots}
\end{itemize}
\vskip-5pt
\[ \scalebox{0.811}{\xymatrix@R=0.7em@C=1em{ & \raisebox{-5mm}{$\tcrossmr{i}{j}$} \ar@3[r] &  \raisebox{-5mm}{$\tcrossdl{i}{j}$} \ar@3[dr] & \\
		\raisebox{-5mm}{$\tcrossul{i}{j}$} \ar@3[ur] \ar@3[rrr] &   &    &  \raisebox{-5mm}{$\didl{i}{j}$}  }} \hspace{2cm} 
	\scalebox{0.811}{\xymatrix@C=1em@R=0.7em{ & \raisebox{-5mm}{$\tcrossmr{i}{j}$} \ar@3[r] &  \raisebox{-5mm}{$\tcrossdl{i}{j}$} \ar@3[dr] & \\
		\raisebox{-5mm}{$\tcrossul{i}{j}$} \ar@3[ur] \ar@3[rrr] &   &    & \raisebox{-5mm}{$\didldot{i}{j}{2}$} + \raisebox{-5mm}{$\didlr{i}{j}$} }} \]
		
\[ \scalebox{0.811}{\xymatrix@R=0.7em@C=1em{ & \raisebox{-5mm}{$\tcrossml{i}{j}$} \ar@3[r] &  \raisebox{-5mm}{$\tcrossdr{i}{j}$} \ar@3[dr] & \\
		\raisebox{-5mm}{$\tcrossur{i}{j}$} \ar@3[ur] \ar@3[rrr] &   &    &  \raisebox{-5mm}{$\didr{i}{j}$}  } \hspace{2cm} 
	\xymatrix@C=1em@R=0.7em{ & \raisebox{-5mm}{$\tcrossml{i}{j}$} \ar@3[r] &  \raisebox{-5mm}{$\tcrossdr{i}{j}$} \ar@3[dr] & \\
		\raisebox{-5mm}{$\tcrossur{i}{j}$} \ar@3[ur] \ar@3[rrr] &   &    &  \raisebox{-5mm}{$\didrdot{i}{j}{2}$} + \raisebox{-5mm}{$\didlr{i}{j}$} }} \]
when $i \ne j$ and $i \cdot j = 0$ or $i \cdot j = -1$ respectively. When $i = j$, we have the following situation:
\[ \scalebox{0.811}{\xymatrix@-2ex@R=0.7em{ 
		\raisebox{-6mm}{$\tcrossul{i}{i}$} \ar@3[rr] \ar@3[dr] & & 0  \\
		& \raisebox{-5mm}{$\tcrossmr{i}{i}$} + \raisebox{-6mm}{$\dcrossnodot{i}{i}$} \ar@3[r] & \raisebox{-5mm}{$\tcrossdl{i}{i}$}  \ar@3 [u]} \hspace{3cm} \xymatrix@-2ex@R=0.7em{ 
		\raisebox{-6mm}{$\tcrossur{i}{i}$} \ar@3[rr] \ar@3[dr] & & 0  \\
		& \raisebox{-5mm}{$\tcrossml{i}{i}$} - \raisebox{-6mm}{$\dcrossnodot{i}{i}$} \ar@3[r] & \raisebox{-5mm}{$\tcrossdr{i}{i}$} \ar@3 [u] }}   \]

\begin{itemize}                                                                    
	\item[D)] \textbf{Double Yang-Baxter} \\
\end{itemize}
\vskip-9pt
The form of this critical branching depends on the labels on the three strands and the value of the bilinear form $\cdot$ between them.
\begin{itemize}
	\item[{\bf i)}] First of all, we consider the case where two consecutive vertices are equal: for instance $i=j \ne k$. The other cases would provide the same discussion.
\end{itemize}
\vskip-4pt
\[ \scalebox{0.711}{ \xymatrix@-3ex@R=0.5em{ & \raisebox{-7mm}{$\ybdcdcg{i}{i}{k}$} \ar@3[r] & \raisebox{-6mm}{$\cddcg{i}{i}{k}$} \ar@3[r] & 0           \\
		\raisebox{-7mm}{$\doubleybg{i}{i}{k}$} \ar@3[ur] \ar@3[dr] & & &    \\
		& \raisebox{-7mm}{$\cgcdybd{i}{i}{k}$} \ar@3[rr] &  & 0 } \hspace{2cm} \xymatrix@-3ex@R=0.5em{ & \raisebox{-7mm}{$\ybdcdcg{i}{i}{k}$} + \raisebox{-4mm}{$\cdcg{i}{i}{k}$} \ar@3[r] & \raisebox{-5mm}{$\cddcg{i}{i}{k}$} + \raisebox{-5mm}{$\cddcg{i}{i}{k}$} + \raisebox{-4mm}{$\cdcg{i}{i}{k}$} \ar@3[d] \\
		\raisebox{-8mm}{$\doubleybg{i}{i}{k}$} \ar@3[ur] \ar@3[dr] & & \raisebox{-6mm}{$\cddcg{i}{i}{k}$} - \raisebox{-5mm}{$\cdcg{i}{i}{k}$} + \raisebox{-5mm}{$\cdcg{i}{i}{k}$} \ar@3[d] \\
		& \raisebox{-7mm}{$\cgcdybd{i}{i}{k}$} \ar@3[r] &  0   }}
\]
when $i \cdot k = 0$ and $i \cdot k = -1$ respectively.
\vskip-5pt
\begin{itemize}
	\item[{\bf ii)}] When three vertices are distinct: we have to distinguish 6 cases according the values of $i \cdot j$, $j \cdot k$ and $i \cdot k$. We focus on the case $i \cdot j=i \cdot k=j \cdot k=-1$, the other forms are proved confluent similarly.
\end{itemize}
\vskip-4pt%
\[ \scalebox{0.711}{\xymatrix@C=1.5em@R=0.5em{ & \raisebox{-7mm}{$\ybdcdcg{i}{j}{k}$} \ar@3[r] & \raisebox{-6mm}{$\cddcgmmm{i}{j}{k}$} + \raisebox{-6mm}{$\cddcgdr{i}{j}{k}$} \ar@3[rr] & & \raisebox{-6mm}{$\cddcgdl{i}{j}{k}$} +    \raisebox{-4mm}{$\cdlr{i}{j}{k}$} + \raisebox{-4mm}{$\cddotsmr{i}{j}{k}$}  \ar@3[d]  \\
		\raisebox{-9mm}{$ \doubleybg{i}{j}{k}$} \ar@3[ur] \ar@3[dr] & & & &   \raisebox{-6mm}{$\cdll{i}{j}{k}{2}$} + \raisebox{-6mm}{$\cddotslm{i}{j}{k}$} + \raisebox{-6mm}{$\cdlr{i}{j}{k}$} + \raisebox{-6mm}{$\cddotsmr{i}{j}{k}$} \\
		&  \raisebox{-8mm}{$\cgcdybd{i}{j}{k}$} \ar@3[r] & \raisebox{-6mm}{$\dcgcdmm{i}{j}{k}$} + \raisebox{-6mm}{$\dcgcdur{i}{j}{k}$} \ar@3[r] & \raisebox{-6mm}{$\dcgcddl{i}{j}{k}$} + \raisebox{-6mm}{$\dcgcddm{i}{j}{k}$} \ar@3[r] & \raisebox{-4mm}{$\cdll{i}{j}{k}{2}$} + \raisebox{-4mm}{$\cddotslul{i}{j}{k}$} + \raisebox{-4mm}{$\cddotslm{i}{j}{k}$} + \raisebox{-4mm}{$\cddotslur{i}{j}{k}$} \ar@3[u] }} \]

\vskip-10pt
\begin{itemize}                    
	\item[{\bf iii)}] Let us consider the case $i=k$:
\end{itemize}
\hspace{-1cm}
$$ \scalebox{0.711} { \xymatrix@-3ex@R=0.5em{ & \raisebox{-6mm}{$\cgcdybd{i}{j}{i}$} \ar@3[r] & \raisebox{-5mm}{$\dcgcd{i}{j}{i}$} \ar@3[dr] &  \\
		\raisebox{-7mm}{$\doubleybg{i}{j}{i}$} \ar@3[ur] \ar@3[dr] & & & 0 \\
		& \raisebox{-7mm}{$\ybdcdcg{i}{j}{i}$} \ar@3[r] & 0 \ar@{=}[ur] &  } \hspace{0.5cm}
	\xymatrix@-2ex@R=0.5em{ 
		& \raisebox{-7mm}{$\ybdcdcg{i}{j}{i}$} \ar@3[rr] &  & 0 \\
		\raisebox{-7mm}{$\doubleybg{i}{j}{i}$} \ar@3[ur] \ar@3[dr] &  &  &  &     \\
		& \raisebox{-7mm}{$\cgcdybd{i}{j}{i}$} + \raisebox{-5mm}{$\cgcd{i}{j}{i}$} \ar@3[r] & \raisebox{-5mm}{$\dcgcdml{i}{j}{i}$} + \raisebox{-5mm}{$\dcgcdur{i}{j}{i}$} + \raisebox{-5mm}{$\dcgcd{i}{j}{i}$}  \ar@3[r] & \raisebox{-5mm}{$\dcgcddl{i}{j}{i}$} - \raisebox{-5mm}{$\cgcd{i}{j}{i}$} + 0 + \raisebox{-5mm}{$\cgcd{i}{j}{i}$} \ar@3[uu] &       }} $$
	\newgeometry{top=2.2cm, bottom=1.9cm, left=2.5cm , right=2.5cm}	
		
when $i \cdot j = 0$ and $i \cdot j = -1$ respectively.

\begin{itemize}                                                                        
	\item[E)] \textbf{Yang-Baxter + Crossings} \\
\end{itemize}
\vskip-10pt
\begin{itemize}
	\item[{\bf i)}] We treat at first the case when two consecutive vertices are equal. For instance if $i=j$ or $i=k$, we have respectively:
\end{itemize}
\vskip-8pt
\[ \scalebox{0.711}{\xymatrix@-3ex@R=0.6em{ \raisebox{-10mm}{$\ybgcrossd{i}{i}{k}$} \ar@3[rrr] \ar@3[dr] & & &  0\\ 
		& \raisebox{-8mm}{$\doublecdcg{i}{i}{k}$} \ar@3[r] & \raisebox{-8mm}{$\tauybd{i}{i}{k}$} \ar@3[r] & 0 }} \]
\vskip-7pt
\[  \scalebox{0.711}{\xymatrix@-3ex@R=0.6em{  & \raisebox{-6mm}{$\doublecdcg{i}{j}{j}$} + \raisebox{-4mm}{$\cg{i}{j}{j}$} \ar@3[r] & \raisebox{-6mm}{$\tauybd{i}{j}{j}$} + \raisebox{-4mm}{$\cg{i}{j}{j}$} \ar@3[dr] &  \\ 
		\raisebox{-6mm}{$\ybgcrossd{i}{j}{j}$} \ar@3[ur] \ar@3[dr] &   & & \raisebox{-5mm}{$\cgcdum{i}{j}{j}$} + \raisebox{-5mm}{$\cgcdur{i}{j}{j}$} + \raisebox{-5mm}{$\cg{i}{j}{j}$} \ar@3[dl] \\
		&  \raisebox{-5mm}{$\cgcddl{i}{j}{j}$} + \raisebox{-5mm}{$\cgcddm{i}{j}{j}$} \ar@{=} [r] & \raisebox{-5mm}{$\cgcddl{i}{j}{j}$}  + \raisebox{-5mm}{$\cgcddm{i}{j}{j}$} - \raisebox{-4mm}{$\cg{i}{j}{j}$} + \raisebox{-4mm}{$\cg{i}{j}{j}$}  &    }} \]
when $i \cdot j = -1$.
\vskip-8pt
\begin{itemize}
	\item[{\bf ii)}] We check the case where all the vertices are different: one can check that the critical branching only depends on the value of $i \cdot k$:
\end{itemize}
\vskip-6pt
\[ \scalebox{0.711}{ \xymatrix@-3ex@R=0.4em{ & \raisebox{-7mm}{$\doublecdcg{i}{j}{k}$} \ar@3[dr] &  \\
		\raisebox{-7mm}{$\ybgcrossd{i}{j}{k}$} \ar@3[ur] \ar@3[dr] &  & \raisebox{-7mm}{$\tauybd{i}{j}{k}$} \ar@3[dl] \\
		& \raisebox{-4mm}{$\cdcg{i}{j}{k}$} & } \hspace{2cm} \xymatrix@-3ex@R=0.4em{ & \raisebox{-6mm}{$\doublecdcg{i}{j}{k}$} \ar@3[r] & \raisebox{-6mm}{$\tauybd{i}{j}{k}$} \ar@3[dr] &   \\
		\raisebox{-7mm}{$\ybgcrossd{i}{j}{k}$} \ar@3[ur] \ar@3[dr] & & & \raisebox{-7mm}{$\cgcdum{i}{j}{k}$} + \raisebox{-5mm}{$\cgcdur{i}{j}{k}$} \ar@3[dll] \\
		& \raisebox{-5mm}{$\cgcddl{i}{j}{k}$} + \raisebox{-5mm}{$\cgcddm{i}{j}{k}$} & &   } } \]
when $ i \cdot k = 0$ and $i \cdot k = -1$ respectively.
\begin{itemize}
\vskip-10pt
	\item[iii)] When the bottom sequence is $iji$, we focus on the case $ i \cdot j = -1$ and the other case would be similar:
\end{itemize} 
\vskip-7pt
\[ \scalebox{0.711}{ \xymatrix@-3ex@R=0.4em{ & \raisebox{-6mm}{$\doublecdcg{i}{j}{i}$} \ar@3[r] & \raisebox{-6mm}{$\tauybd{i}{j}{i}$} + \raisebox{-4mm}{$\cd{i}{j}{i}$} \ar@3[dr] &  \\
		\raisebox{-6mm}{$\ybgcrossd{i}{j}{i}$} \ar@3[ur] \ar@3[dr] & & & \raisebox{-5mm}{$\cgcdum{i}{j}{i}$} + \raisebox{-5mm}{$\cgcdur{i}{j}{i}$} + \raisebox{-4mm}{$\cd{i}{j}{i}$} \ar@3[dl] \\
		& \raisebox{-5mm}{$\cgcddl{i}{j}{i}$} + \raisebox{-5mm}{$\cgcddm{i}{j}{i}$} \ar@{=}[r] & \raisebox{-5mm}{$\cgcddl{i}{j}{i}$} - \raisebox{-4mm}{$\cd{i}{j}{i}$} + \raisebox{-5mm}{$\cgcddm{i}{j}{i}$} + \raisebox{-4mm}{$\cd{i}{j}{i}$} & }} \]

We study the confluence diagrams of all the forms of the branching $\raisebox{-7mm}{$\scalebox{0.711}{\tauybg{}{}{}}$}$ in the same way.
 
\begin{itemize} 
	\item[F)] \textbf{Yang-Baxter with dots} \\
\end{itemize}
\begin{itemize}
\vskip-10pt
	\item[i)] When the three vertices are disctinct, the diagrams do not depend on the values of the bilinear pairing.
\end{itemize}
\vskip-5pt
\[ \scalebox{0.711}{ \xymatrix@-3ex{ & \raisebox{-4mm}{$\ybdul{i}{j}{k}$} \ar@3[dr] & \\
		\raisebox{-4mm}{$\ybgul{i}{j}{k}$} \ar@3[dr] \ar@3[ur] & & \raisebox{-4mm}{$\ybddr{i}{j}{k}$} \\
		& \raisebox{-4mm}{$\ybgdr{i}{j}{k}$} \ar@3[ur] & } \hspace{0.5cm} \xymatrix@-3ex{ & \raisebox{-4mm}{$\ybdum{i}{j}{k}$} \ar@3[dr] & \\
		\raisebox{-4mm}{$\ybgum{i}{j}{k}$} \ar@3[dr] \ar@3[ur] & & \raisebox{-4mm}{$\ybddm{i}{j}{k}$} \\
		& \raisebox{-4mm}{$\ybgdm{i}{j}{k}$} \ar@3[ur] & }  \hspace{0.5cm} \xymatrix@-3ex{ & \raisebox{-4mm}{$\ybdur{i}{j}{k}$} \ar@3[dr] & \\
		\raisebox{-4mm}{$\ybgur{i}{j}{k}$} \ar@3[dr] \ar@3[ur] & & \raisebox{-4mm}{$\ybddl{i}{j}{k}$} \\
		& \raisebox{-4mm}{$\ybgdl{i}{j}{k}$} \ar@3[ur] & }} \]

\begin{itemize}
	\item[ii)] When two consecutive vertices are equal, for instance if $i = j \ne k$, if a dot is placed on the left strand, then it will go down in the diagram without creating any additive term because there will be no crossing with two strands with the same label, so that the branching is trivially confluent. For the other cases, the same process applies. Let us prove the confluence when there is a dot on the rightmost strand:
\end{itemize}
\vskip-5pt
\[ \scalebox{0.711}{\xymatrix@-2ex{ & \raisebox{-5mm}{$\ybdur{i}{i}{k}$} \ar@3[r] & \raisebox{-5mm}{$\ybdmm{i}{i}{k}$} - \raisebox{-5mm}{$\cgcd{i}{i}{k}$} \ar@3[dr] & \\
		\raisebox{-5mm}{$\ybgur{i}{i}{k}$} \ar@3[ur] \ar@3[dr] & & & \raisebox{-5mm}{$\ybddl{i}{i}{k}$} - \raisebox{-5mm}{$\cdcg{i}{i}{k}$} \\
		& \raisebox{-5mm}{$\ybgmm{i}{i}{k}$} \ar@3[r] & \raisebox{-5mm}{$\ybgdl{i}{i}{k}$} - \raisebox{-5mm}{$\cgcd{i}{i}{k}$} \ar@3[ur] & }} \]
One may apply the same process for the case $i \ne j = k$ with a dot placed on the up of the leftmost (or middle) strand.

\begin{itemize}
	\item[iii)] When the bottom sequence is $iji$, the way to make a dot go down is the same no matter where the dot is placed at the beginning, we only check confluence for a dot placed on the leftmost strand. It would provide the same diagram for the other cases.
\end{itemize}
\vskip-5pt
\[ \scalebox{0.711}{ \xymatrix@-3ex{ & \raisebox{-6mm}{$\ybdul{i}{j}{i}$} \ar@3[r] & \raisebox{-6mm}{$\ybdmm{i}{j}{i}$} + \raisebox{-6mm}{$\dcr{i}{j}{i}$} \ar@3[dr] & \\
		\ybgul{i}{j}{i} \ar@3[ur] \ar@3[dr] & & & \ybddr{i}{j}{i} + \tid{i}{j}{i} \\
		& \ybgmmm{i}{j}{i} \ar@3[r] & \ybgdr{i}{j}{i} + \dcl{i}{j}{i}  \ar@3[ur] & }} \]
		
	\newgeometry{top=2.2cm, bottom=1.7cm, left=2.5cm , right=2.5cm}
		
\[ \scalebox{0.711}{ \xymatrix@-3ex@R=0.5em{ & \raisebox{-5mm}{$\ybdul{i}{j}{i}$} + \raisebox{-5mm}{$\tidl{i}{j}{i}$} \ar@3[r] & \raisebox{-5mm}{$\ybdmmm{i}{j}{i}$} + \raisebox{-5mm}{$\tidl{i}{j}{i}$} + \raisebox{-5mm}{$\dcr{i}{j}{i}$} \ar@3[dr] & \\
		\raisebox{-6mm}{$\ybgul{i}{j}{i}$} \ar@3[ur] \ar@3[dr] &  & & \raisebox{-6mm}{$\ybddr{i}{j}{i}$} + \raisebox{-5mm}{$\tidl{i}{j}{i}$} + \raisebox{-5mm}{$\tidm{i}{j}{i}$} + \raisebox{-5mm}{$\tidr{i}{j}{i}$} \\
		& \raisebox{-6mm}{$\ybgmmm{i}{j}{i}$} \ar@3[r] & \raisebox{-6mm}{$\ybgdr{i}{j}{i}$} + \raisebox{-6mm}{$\dcl{i}{j}{i}$} \ar@3[ur] &   }} \]
		\vskip-5pt
when $i \cdot j = 0$ and $i \cdot j = -1$ respectively.

\vskip-15pt

\begin{itemize}
	\item[G)] \textbf{Indexed critical branchings}
\end{itemize}
	Let us prove that the indexed critical branchings of the form (\ref{indexyb}) given in Section \ref{SSS:CriticalBranchingsKLRAlgebra} are confluent, in the following two cases:
plug in (\ref{indexyb}) is given by the following $2$-cells:
\begin{enumerate}[{\bf i)}]
\item $\identdots{i}{n}$ for every $n \in N$,
\item \raisebox{-6mm}{$\scalebox{1}{\dcrossdldot{i}{l}{n}}$}  \: for all $n \in \N$ and for any $l$ in $I$.
\end{enumerate}

		For the first case, the instance for $n=0$ was already checked in the Double Yang-Baxter family of critical branchings.
		Let us prove the confluence of this indexed critical branchings in the particular case when $i=k$ and $i \cdot j=-1$. This is the "most complicated" case in the sense that it is the one that creates the most additive terms.
		
Let us denote by $\alpha_{i,j}^{L,n}$ and $\alpha_{i,j}^{R,n}$ the $3$-cells 		
\vskip-7pt		
 \[ \alpha_{i,j}^{L,n} = \underbrace{ \alpha_{i,j}^L \star_{2} \alpha_{i,j}^L \dots \star_2 \alpha_{i,j}^L}_{n \quad \textrm{times}} \qquad \textrm{(resp. } \alpha_{i,j}^{R,n} = \underbrace{ \alpha_{i,j}^R \star_{2} \alpha_{i,j}^R \dots \star_2 \alpha_{i,j}^R}_{n \quad \textrm{times}} ) \] depicted by
\vskip-7pt
		\[ \scalebox{0.811}{\xymatrix{ \raisebox{-6mm}{$\dcrossuldot{i}{j}{n}$} \ar@3[r] ^-{\alpha_{i,j}^{L,n}} & \raisebox{-6mm}{$\dcrossdrdot{i}{j}{n}$} }}, \qquad \scalebox{0.811}{ \xymatrix@+2ex{  \raisebox{-6mm}{$\dcrossuldot{i}{i}{n}$} \; \ar@3[r] ^-{\alpha_{i,i}^{L,n}} &  \raisebox{-6mm}{$\dcrossdrdot{i}{i}{n}$} + \sum\limits_{a+b = n-1}{ \raisebox{-6mm}{$\didlrdot{i}{i}{a}{b}$} } }}, \]
\vskip-5pt		
		\[ \scalebox{0.811}{\xymatrix@+2ex{ \raisebox{-6mm}{$\dcrossurdot{i}{j}{n}$} \ar@3[r]^{\alpha_{i,j}^{R,n}} & \raisebox{-6mm}{$\dcrossdldot{i}{j}{n}$} }}, \qquad \scalebox{0.811}{\xymatrix{  \raisebox{-6mm}{$\dcrossurdot{i}{i}{n}$} \; \ar@3[r]^{\alpha_{i,i}^{R,n}} &  \raisebox{-6mm}{$\dcrossdldot{i}{i}{n}$} - \sum\limits_{a+b = n-1}{ \raisebox{-6mm}{$\didlrdot{i}{i}{a}{b}$} } }}. \]

\vskip-10pt
	
	Thus, we have:
	\begin{eqnarray*} 
		\hspace{-2cm}
		\xymatrix @R=1.2em @C=1.8em { &   \raisebox{-5mm}{$\scalebox{0.711}{\doubleybgdotc{i}{j}{i}{n}}$} - \sum\limits_{a+b=n-1}{ \raisebox{-5mm}{$\scalebox{0.711}{\ybgcrossdotsb{i}{j}{i}{a}{b} }$}} \ar@3[rr] & & \raisebox{-5mm}{$\scalebox{0.711}{\indexc{i}{j}{i}{n}}$} - \sum\limits_{a+b=n-1}{\raisebox{-5mm}{$\scalebox{0.711}{ \cgcdot{i}{j}{i}{a+1}{b} }$}} - \sum\limits_{a+b=n-1}{\scalebox{0.711}{ \raisebox{-5mm}{$\cgcdots{i}{j}{i}{a}{}{b}$} }} +  \scalebox{0.711}{\raisebox{-5mm}{$\cgcddld{i}{j}{i}{n}$}} \ar@{=}[d]   \\
			& \scalebox{0.711}{\raisebox{-6mm}{$\doubleybgdotb{i}{j}{i}{n}$}} - \sum\limits_{a+b=n-1}{\raisebox{-6mm}{$\scalebox{0.711}{ \ybgcrossdotsa{i}{j}{i}{a}{b} }$}} \ar@3[u] & &  \hspace{-1cm} \scalebox{0.711}{\raisebox{-7mm}{$\indexc{i}{j}{i}{n}$}} - \sum\limits_{a+b=n-1}{\scalebox{0.711}{ \raisebox{-6mm}{$\cgcdot{i}{j}{i}{a}{b+1}$} }} - \sum\limits_{a+b=n-1}{\raisebox{-6mm}{$\scalebox{0.711}{\cgcdots{i}{j}{i}{a}{}{b}}$}} +  \scalebox{0.5}{\raisebox{-6mm}{$\cgcddrd{i}{j}{i}{n}$}} \\
			\scalebox{0.711}{\raisebox{-6mm}{$\doubleybgdot{i}{j}{i}{n}$}}  \ar@3[ur] \ar@3[dr]  & & & &  \\
			&   \scalebox{0.711}{\raisebox{-6mm}{$\indexa{i}{j}{i}{n}$}} +  \scalebox{0.711}{\raisebox{-6mm}{$\cgcddrd{i}{j}{i}{n}$}} \ar@3[d] &  & \scalebox{0.711}{\raisebox{-6mm}{$\indexc{i}{j}{i}{n}$}} - \sum\limits_{a+b=n-1} {\scalebox{0.711}{\raisebox{-6mm}{$\cgtcdotsb{i}{j}{i}{a}{b}$}}} +  \scalebox{0.711}{\raisebox{-6mm}{$\cgcddrd{i}{j}{i}{n}$}} \ar@3[uu]  \\
			&   \scalebox{0.711}{\raisebox{-6mm}{$\indexb{i}{j}{i}{n}$}} +  \scalebox{0.711}{\raisebox{-6mm}{$\cgcddrd{i}{j}{i}{n}$}} \ar@3[rr] & & \scalebox{0.711}{\raisebox{-6mm}{$\indexc{i}{j}{i}{n}$}} - \sum\limits_{a+b=n-1} {\scalebox{0.711}{\raisebox{-6mm}{$\cgtcdotsa{i}{j}{i}{a}{b}$}}} +  \scalebox{0.711}{\raisebox{-6mm}{$\cgcddrd{i}{j}{i}{n}$}} \ar@3[u]  }
	\end{eqnarray*}


		For the second indexation, one remarks that the fourth vertex of the sequences does not matter in the reductions. We consider the case where the bottom sequence is $ijik$ with $i \cdot j =0$. Let us at first consider this indexation for $n=0$: 
		\begin{eqnarray} 
		\label{indexnul}
		\scalebox{0.8}{\xymatrix{ & \raisebox{-5mm}{$\lastbc{i}{j}{i}{k}$} \ar@3[r] & \raisebox{-5mm}{$\lasta{i}{j}{i}{k}$} \ar@3[r] & \raisebox{-5mm}{$\lastab{i}{j}{i}{k}$} \ar@3[dr] & \\
				\raisebox{-6mm}{$\last{i}{j}{i}{k}$} \ar@3[ur] \ar@3[dr] & & & & \raisebox{-6mm}{$\lastac{i}{j}{i}{k}$} \\
				& \raisebox{-5mm}{$\lastb{i}{j}{i}{k}$} \ar@3[r] & \raisebox{-5mm}{$\lasta{i}{j}{i}{k}$} \ar@3[r] & \raisebox{-5mm}{$\lastba{i}{j}{i}{k}$} \ar@3[ur] & }}
		\end{eqnarray}
		This diagram was given in \cite{GM09} for the indexation of \raisebox{-4mm}{$\scalebox{0.6}{\crossing{}{}}$} in the double Yang-Baxter diagram. When $i \cdot j= -1$, it is the same branching except that it creates an extra term \[ \raisebox{-7mm}{$\scalebox{0.7}{\triplecg{i}{j}{i}{k}}$} \] in both reducing paths. For $n > 0$, the bottom line of (\ref{indexnul}) defines a $3$-cell $$\gamma_{ijik}: \raisebox{-6mm}{$\scalebox{0.7}{\last{i}{j}{i}{k}}$} \Rrightarrow  \raisebox{-7mm}{$\scalebox{0.7}{\lasta{i}{j}{i}{k}}$}.$$ As we started reducing only the bottom part on the diagram, we can apply the same reductions on the diagram \[ \scalebox{0.7}{\lastdota{i}{j}{i}{k}{n}} \] since the dot $2$-cell never appears in the source of any reduction.
		This enables us to define, for any $n \in \N$, a $3$-cell $$ \gamma_n:  \raisebox{-7mm}{$\scalebox{0.7}{\lastdota{i}{j}{i}{k}{n}}$} \Rrightarrow \raisebox{-7mm}{$\scalebox{0.7}{\lastbdota{i}{j}{i}{k}{n}}$} + \raisebox{-7mm}{$\scalebox{0.7}{\triplecgdot{i}{j}{i}{k}{n}}$} $$
		Then we have:
		\begin{eqnarray*}
			\hspace{-2cm}
			\xymatrix @R=1em @C=0.2em{ & & \scalebox{0.611}{\raisebox{-7mm}{$\lastdota{i}{j}{i}{k}{n}$}} \ar@3[dll] \ar@3[drr]^{\gamma_n} & & \\
				\scalebox{0.611}{\raisebox{-7mm}{$\lastdotb{i}{j}{i}{k}{n}$}} - \sum\limits_{a+b=n-1}{\scalebox{0.611}{\raisebox{-7mm}{$\sansnomadmr{i}{j}{i}{k}{a}{b}$}}} \ar@3[d] & & & & \scalebox{0.611}{\raisebox{-7mm}{$\lastbdota{i}{j}{i}{k}{n}$}} + \scalebox{0.6}{\raisebox{-7mm}{$\triplecgdot{i}{j}{i}{k}{n}$}} 
				\ar@3[d] \\
				\scalebox{0.711}{\raisebox{-7mm}{$\lastdotc{i}{j}{i}{k}{n}$}} - \sum\limits_{a+b=n-1}{\scalebox{0.611}{\raisebox{-7mm}{$\sansnomadlr{i}{j}{i}{k}{a}{b}$}}} \ar@3[d] & & & & \scalebox{0.611}{\raisebox{-7mm}{$\lastbdotb{i}{j}{i}{k}{n}$}} + \scalebox{0.611}{\raisebox{-7mm}{$\triplecgdot{i}{j}{i}{k}{n}$}} \ar@3[d] \\
				\scalebox{0.611}{\raisebox{-7mm}{$\lastdotc{i}{j}{i}{k}{n}$}} - \sum\limits_{a+b=n-1}{\scalebox{0.611}{\raisebox{-7mm}{$\triplecgddots{i}{j}{i}{k}{a}{}{b}$}}} - \sum\limits_{a+b=n-1}{\scalebox{0.611}{\raisebox{-7mm}{$\triplecgdots{i}{j}{i}{k}{a+1}{b}$}}} \ar@3[dd]^{\gamma_0} & & & & \scalebox{0.611}{\raisebox{-7mm}{$\lastbdotc{i}{j}{i}{k}{n}$}} + \scalebox{0.611}{\raisebox{-7mm}{$\triplecgdot{i}{j}{i}{k}{n}$}} - \sum\limits_{a+b=n-1}{\scalebox{0.611}{\raisebox{-7mm}{$\sansnombdotlm{i}{j}{i}{k}{a}{b}$}}} \ar@3[d] \\
				& & & & \scalebox{0.611}{\raisebox{-7mm}{$\lastbdotc{i}{j}{i}{k}{n}$}} + \scalebox{0.611}{\raisebox{-7mm}{$\triplecgdot{i}{j}{i}{k}{n}$}} - \sum\limits_{a+b=n-1}{\scalebox{0.611}{\raisebox{-7mm}{$\sansnombdotlr{i}{j}{i}{k}{a}{b}$}}} \ar@3[d] \\
				\scalebox{0.611}{\raisebox{-7mm}{$\lastbdotc{i}{j}{i}{k}{n}$}} + \scalebox{0.611}{\raisebox{-7mm}{$\triplecgdotl{i}{j}{i}{k}{n}$}} \ar@{=}[rrrr] & & & & \scalebox{0.611}{\raisebox{-7mm}{$\lastbdotc{i}{j}{i}{k}{n}$}} + \scalebox{0.6}{\raisebox{-7mm}{$\triplecgdot{i}{j}{i}{k}{n}$}} \\
				\hspace{1cm} - \sum\limits_{a+b=n-1}{ \scalebox{0.611}{\raisebox{-7mm}{$\triplecgddots{i}{j}{i}{k}{a}{}{b}$}}}- \sum\limits_{a+b=n-1}{\scalebox{0.611}{\raisebox{-7mm}{$\triplecgdots{i}{j}{i}{k}{a+1}{b}$}}} & & & & \hspace{1cm} - \sum\limits_{a+b=n-1}{\scalebox{0.611}{\raisebox{-7mm}{$\triplecgddots{i}{j}{i}{k}{a}{}{b}$}}} - \sum\limits_{a+b=n-1}{\scalebox{0.611}{\raisebox{-7mm}{$\triplecgdots{i}{j}{i}{k}{a}{b+1}$}}}  }
		\end{eqnarray*}

\section{Critical branchings modulo of the linear~$(3,2)$-polygraph modulo}

\subsection{Further $3$-cells in $\KLRb$}
\label{SS:FurtherCells}
In this subsection, we define some additional $3$-cells in $\KLRb_3$, which we will use to prove the confluence modulo of the linear~$(3,2)$-polygraph modulo $\ER$. First of all, using the degree conditions on bubbles on the terms
\[ \sum\limits_{r \geq 0} {\stda{i}{n}{n-r-2}{r}} \: ; \hspace{1cm} \text{(resp. $\sum\limits_{r \geq 0} {\stdb{i}{r}{n-r-2}{n}}$ \: )}, \]
when $r > - \hil -1$ (resp. $r \leq \hil -1$), then $n-r-2 < - \hil -1$ (resp. $n-r-2 < \hil -1$ and then the bubble reduces to $0$. We then denote by $b'_{i, \lambda}$ and $c'_{i, \lambda}$ the following $3$-cells in $KLR$ obtained by application of the $3$-cells $b_{i,\lambda}^0$ and $c_{i,\lambda}^0$:
\[ \sum\limits_{r \geq 0} {\stda{i}{n}{n-r-2}{r}} \overset{b'_{i,\lambda}}{\Rrightarrow} \sum\limits_{r = 0}^{-\hil -1} {\stda{i}{n}{n-r-2}{r}} \: ; \hspace{1cm}  \sum\limits_{r \geq 0} {\stdb{i}{r}{n-r-2}{n}} \overset{c'_{i,\lambda}}{\Rrightarrow} \sum\limits_{r = 0}^{\hil -1} {\stdb{i}{r}{n-r-2}{n}} \] 

We also define the $3$-cell $A'_{i,\lambda}$ for $\hil \geq 0$ having as $2$-source \[ \tfishdrp{i}{n} \] and as $2$-target either $0$ if $n < \hil$ or $- \cupr{i}$ if $n = \hil$ as the following composite of rewriting steps in $\ER$:
\[ \xymatrix@C=2.8em{ \tfishdrpr{i}{n} \ar@3 [r] ^-{(i_3^2)^{-} \cdot \alpha_{i,\lambda}^{R,+}} & \tfishdrpl{i}{n} - \sum\limits_{a+b=n-1} {\drulea{i}{a}{b}} \ar@3 [r] ^-{(i_1^2)^- \cdot A_{i,\lambda}} & 0 - \sum\limits_{a+b=n-1} {\drulea{i}{a}{b}} \ar @3[r]^{b_{i,\lambda}} & - \delta_{n,\hil} \cupr{i}  } \] 
where:
\begin{itemize}
	\item the $3$-cell $(i_3^2)^{-} \cdot \alpha_{i,\lambda}^{R,+}$ is the rewriting step of $\ER$ given by 
	\[ \tfishdrpr{i}{n} = \tfishdrpra{i}{n} \sim \tfishdrprb{i}{n} \overset{\alpha_{i,\lambda}^{R,+}}{\Rrightarrow}  \tfishdrpl{i}{n} - \sum\limits_{a+b=n-1} {\drulea{i}{a}{b}} \]
	\item the $3$-cell $b_{i,\lambda}$ is defined by successive applications of the cells $b_{i,\lambda}^{0,b}$ since $\posbubd{i}{b}$ reduces to $0$ unless $n = \hil$ and $a = 0$,$b= \hil -1$, and in that case $\posbubdf{i}{\hil -1}$ reduces to $1_{1_{\lambda}}$ by $b_{i,\lambda}^{1,\hil-1}$.
\end{itemize}

We define in a similar fashion $3$-cells
\[ \hspace{-2.1cm}
\tfishulp{i}{n} \overset{B'_{i,\lambda}}{\Rrightarrow}  \left\{
\begin{array}{ll}
- \capl{i} & \mbox{if } n = \hil  \\
0 & \mbox{if } n < \hil 
\end{array}
\right. ; \;  \tfishurp{i}{n} \overset{C'_{i,\lambda}}{\Rrightarrow}  \left\{
\begin{array}{ll}
\capr{i} & \mbox{if } n = - \hil  \\
0 & \mbox{if } n < - \hil 
\end{array}
\right. ; \; \tfishdlp{i}{n} \overset{D'_{i,\lambda}}{\Rrightarrow}  \left\{
\begin{array}{ll}
\cupl{i} & \mbox{if } n = - \hil  \\
0 & \mbox{if } n < - \hil 
\end{array}
\right. \]
for $\hil \geq 0$ for $B'_{i,\lambda}$ and $hil \leq 0$ for $C'_{i,\lambda}$ and $D'_{i,\lambda}$.

\subsection{Branchings from $KLR$ relations}
\label{A:BranchingsDots}
\subsubsection{Critical branchings $(A_{i,\lambda}, \alpha_{i,\lambda}^{L,+})$}
For any $i$ in $I$ and $\lambda$ in $X$ the weight lattice, and for any value of $\hil$, the critical branchings $(A_{i,\lambda}, \alpha_{i,\lambda}^{L,+})$ are confluent modulo $E$ as follows:
\[
\xymatrix@C=4.5em@R=3em{ \cbadot{i} 
	\ar@3[rrr] ^-{A_{i,\lambda}}
	\ar@3 [d] _-{\rotatebox{90}{=}} & & & - \sum\limits_{n=0}^{- \hil} \druleaf{i}{n+1}{-n-1} \ar@3 [d] ^-{(i_1^2)^-}  \\
	\cbadot{i} \ar@3 [r] _-{\alpha_{i,\lambda}^{L,+}} & \cbadotb{i} + \ruleawdot{i} \ar@3[r] _-{i_2^2 \star_2 (i_3^2)^- \cdot \alpha_{i,\lambda}^{R,+}} & \cbadotc{i}  \ar@3[r] _-{(i_1^2)^- \cdot A_{i,\lambda}} & - \sum\limits_{n=0}^{- \hil} \ruleabul{i}{n} 
} \]

\subsubsection{Critical branchings $(B_{i,\lambda}, i_4^2 \cdot \alpha_{i,\lambda}^{L,+})$}

\[
\xymatrix@C=4.5em@R=3em{ \cbbdot{i} 
	\ar@3[rrr] ^-{B_{i,\lambda}}
	\ar@3 [d] _-{\rotatebox{90}{=}} & & & - \sum\limits_{n=0}^{- \hil} \rulebbul{i}{n} \ar@3 [d] ^-{i_4^2} \\
	\cbbdot{i} \ar@3 [r] ^-{i_4^2 \cdot \alpha_{i,\lambda}^{L,+}} & \cbbdotb{i} + \rulebwdot{i} \ar@3 [r] _-{i_2^2 \star_2 (i_3^2)^- \cdot \alpha_{i,\lambda}^{R,+}} & \cbbdotc{i} \ar@3[r] _-{B_{i,\lambda}} & - \sum\limits_{n=0}^{- \hil} \druleb{i}{n+1}{-n-1} }   \]

\subsubsection{Critical branchings $(i_3^2 \cdot C_{i,\lambda}, \alpha_{i,\lambda}^{R,+})$}
\[
\xymatrix@C=4.5em@R=3em{ \cbcdot{i} 
	\ar@3[rrr] ^-{i_3^2 \cdot C_{i,\lambda}}
	\ar@3 [d] _-{\rotatebox{90}{=}} & & &  \sum\limits_{n=0}^{ \hil} \drulec{i}{n+1}{-n-1} \ar@3 [d] ^-{(i_3^2)^-} \\
	\cbcdot{i} \ar@3 [r] ^-{\alpha_{i,\lambda}^{R,+}} & \cbcdotb{i} - \rulecwdot{i} \ar@3 [r] _-{(i_1^2)^- \star_2 i_4^2 \cdot \alpha_{i,\lambda}^{L,+}} & \cbcdotc{i} \ar@3[r] _-{C_{i,\lambda}} &  \sum\limits_{n=0}^{ \hil} \rulecbul{i}{n} }   \]

\subsubsection{Critical branchings $(D_{i,\lambda}, \alpha_{i,\lambda}^{R,+})$}
\[
\xymatrix@C=5.5em@R=3em{ \cbddot{i} 
	\ar@3[rrr] ^-{D_{i,\lambda}}
	\ar@3 [d] _-{\rotatebox{90}{=}} & & &  \sum\limits_{n=0}^{ \hil} \druled{i}{n+1}{-n-1} \ar@3 [d] ^-{(i_2^2)^-} \\
	\cbddot{i} \ar@3 [r] ^-{\alpha_{i,\lambda}^{R,+}} & \cbddotb{i} - \ruledwdot{i} \ar@3[r] _{(i_1^2)^- \star_2 i_4^2 \cdot \alpha_{i,\lambda}^{L,+}} & \cbddotc{i} \ar@3 [r] _-{D_{i,\lambda}} &  \sum\limits_{n=0}^{ \hil} \ruledbul{i}{n} }   \]

\subsubsection{Critical branchings $(E_{i,\lambda}, \alpha_{i,\lambda}^{L,+})$ and $(F_{i,\lambda}, \alpha_{i,\lambda}^{R,+})$}
Let us prove that for any $i$ in $I$ and $\lambda$ in $X$, and for any value of $\hil$, the critical branching $(E_{i,\lambda}, \alpha_{i,\lambda}^{L,+})$ is confluent modulo $E$. The proof of confluence modulo of this branching follows the proof scheme of Lemma \ref{L:FurtherRelations}, and we prove the confluence of the critical branching $(F_{i,\lambda}, \alpha_{i,\lambda}^{L,+})$ similarly.
Let us denote by $\alpha_i$ the following composition of $3$-cells of $\ER$:
\[  \xymatrix@C=4em{ \tdcrossrldtest{i}{i} \ar@3[r] ^-{\alpha_{i,\lambda}^{L,+}} & \tdcrossrldm{i}{i} + \cuprb{i} \ar@3[r] ^-{\alpha_{i,\lambda}^{R,+}} & \tdcrossrldd{i}{i} - \cuprb{i} + \acapl{i} } \]

\begin{enumerate}[{\bf i)}]
	\item For $\hil > 0$,
	\[ \xymatrix@C=6em@R=2.5em{ 
		\tdcrossrldtest{i}{i} \ar@3 [rr] ^-{E_{i,\lambda}}
		\ar@3 [d] _-{\rotatebox{90}{=}}   & & - \diddownupdl{i}{i} \ar @3 [d] ^-{\rotatebox{90}{=}} \\
		\tdcrossrldtest{i}{i} \ar@3 [r] _-{\alpha_i} & \tdcrossrldd{i}{i} - \cuprb{i} + \acapl{i} \ar@3 [r] _-{E_{i,\lambda} - A_{i,\lambda} + B_{i,\lambda}} & - \diddownupdl{i}{i} } \]  
	using that for $\hil > 0$, $A_{i,\lambda}$ and $B_{i,\lambda}$  admit $0$ as $2$-target, and where the $3$-cell $E_{i,\lambda} - A_{i,\lambda} + B_{i,\lambda}$ is actually a composite of three rewriting steps of $\ER$.
	\item For $\hil = 0$, the $2$-cells
	\[ \cuprb{i} \qquad \raisebox{-3mm}{$\text{and}$} \qquad \acapl{i}  \] 
	both rewrites with respect to $\ER$ into
	\[ - \stdaempty{i}{-1} \]
	so that the $2$-target of the $3$-cell $E_{i,\lambda} - A_{i,\lambda} + B_{i,\lambda}$ is unchanged, which proves the confluence of the branching.
	\item For $\hil < 0$,
	\[ \xymatrix@R=3em@C=2.3em{
		\tdcrossrldtest{i}{i} 
		\ar@3[r] ^-{\Ceil{E}}
		\ar@3 [d] _-{\rotatebox{90}{=}}  & - \diddownupdl{i}{} + \sum\limits_{n=0}^{- \hil -1} \sum\limits_{r \geq 0} \stda{i}{n+1}{-n-r-2}{r} \ar@3[r] ^-{\Ceil{b'}} &  - \diddownupdl{i}{} + \sum\limits_{n=0}^{- \hil -1} \sum\limits_{r =0}^{- \hil} \stda{i}{n+1}{-n-r-2}{r} \ar@3[d] ^-{\rotatebox{90}{=}} \\
		\tdcrossrldtest{i}{i} \ar@3[r] _-{\alpha_i} & \tdcrossrldd{i}{i} - \cuprb{i} + \acapl{i} \ar@3[r] _-{\gamma}  & 
		- \diddownupdl{i}{i} +  \sum\limits_{n=1}^{-\hil} \sum\limits_{r=0}^{-\hil} \stda{i}{n}{-n-r-1}{r}  } \]
	where the $3$-cell $\gamma$ is defined as the following composite of $3$-cells of $(\ER)_3^\ell$:
	\begin{align*}
	\hspace{-1.4cm} \tdcrossrldd{i}{i} - \cuprb{i} + \acapl{i} & \overset{\Ceil{E} - \Ceil{A} + \Ceil{B}} \Rrightarrow  \: \sum\limits_{n=0}^{- \hil -1} \sum\limits_{r \geq 0} \stda{i}{n}{-n-r-2}{r+1} - \sum\limits_{n=0}^{-\hil} \lemmecb{i}{n} + \sum\limits_{n=0}^{-\hil} \lemmec{i}{n} \\
	& \overset{\Ceil{b'}}{\Rrightarrow} \sum\limits_{n=0}^{- \hil -1} \sum\limits_{r = 0}^{-\hil -1} \stda{i}{n}{-n-r-2}{r+1} - \sum\limits_{n=0}^{-\hil} \lemmecb{i}{n} + \sum\limits_{n=0}^{-\hil} \lemmec{i}{n} \\
	& =  \sum\limits_{n=0}^{- \hil -1} \sum\limits_{r = -1}^{-\hil -1} \stda{i}{n}{-n-r-2}{r+1} - \sum\limits_{n=0}^{-\hil} \lemmecb{i}{n} + \ruleahil{i}{\hil} \\
	& = \sum\limits_{n=1}^{-\hil} \sum\limits_{r=0}^{-\hil} \stda{i}{n}{-n-r-1}{r} 
	\end{align*}
	where the equalities are obtained from the linear structure using reindexations of sums.
\end{enumerate}


\subsubsection{Critical branchings $(\beta_{i,j}^{\lambda,+}, (i_1^0 \star_2 i_4^0)^- \cdot F_{i,j,\lambda})$}
\begin{enumerate}[{\bf i)}]
	\item First of all, let us consider the case where $i = j$, and thus the source of this branching rewrites to $0$ using $\beta_{i,\lambda^+}$. The other side of this critical branching is given by the following scheme of rewritings with respect to $\ER$:
	\begin{align*}
	\isocbb{i}{i} & \overset{F_{i,\lambda}}{\tfl} - \identu{i} \raisebox{-2mm}{$\begin{tikzpicture}[baseline = 0]
		\draw[-,thick,black] (0,0.4) to[out=180,in=90] (-.2,0.2);
		\draw[->,thick,black] (0.2,0.2) to[out=90,in=0] (0,.4);
		\draw[-,thick,black] (-.2,0.2) to[out=-90,in=180] (0,0);
		\draw[-,thick,black] (0,0) to[out=0,in=-90] (0.2,0.2);
		\node at (0,-.1) {$\scriptstyle{i}$};
		\end{tikzpicture}$} + \sum\limits_{n=0}^{\hil -1} \sum\limits_{r \geq 0} \stdacl{i}{r}{-n-r-2}{n} \\
	& \overset{b'_{i,\lambda}}{\tfl} - \identu{i} \raisebox{-2mm}{$\begin{tikzpicture}[baseline = 0]
		\draw[-,thick,black] (0,0.4) to[out=180,in=90] (-.2,0.2);
		\draw[->,thick,black] (0.2,0.2) to[out=90,in=0] (0,.4);
		\draw[-,thick,black] (-.2,0.2) to[out=-90,in=180] (0,0);
		\draw[-,thick,black] (0,0) to[out=0,in=-90] (0.2,0.2);
		\node at (0,-.1) {$\scriptstyle{i}$};
		\end{tikzpicture}$} + \sum\limits_{n=0}^{\hil -1} \sum\limits_{r = 0}^{\hil -1} \stdacl{i}{r}{-n-r-2}{n} \\
	& \equiv_E - \identu{i} \raisebox{-2mm}{$\begin{tikzpicture}[baseline = 0]
		\draw[-,thick,black] (0,0.4) to[out=180,in=90] (-.2,0.2);
		\draw[->,thick,black] (0.2,0.2) to[out=90,in=0] (0,.4);
		\draw[-,thick,black] (-.2,0.2) to[out=-90,in=180] (0,0);
		\draw[-,thick,black] (0,0) to[out=0,in=-90] (0.2,0.2);
		\node at (0,-.1) {$\scriptstyle{i}$};
		\end{tikzpicture}$} +  \sum\limits_{n=0}^{\hil -1} \sum\limits_{r = 0}^{\hil -1} \begin{tikzpicture}[baseline = 0,scale=1.2]
	\draw[<-,thick,black] (0,0.4) to[out=180,in=90] (-.2,0.2);
	\draw[-,thick,black] (0.2,0.2) to[out=90,in=0] (0,.4);
	\draw[-,thick,black] (-.2,0.2) to[out=-90,in=180] (0,0);
	\draw[-,thick,black] (0,0) to[out=0,in=-90] (0.2,0.2);
	\node at (0,-.1) {$\scriptstyle{i}$};
	\node at (-0.2,0.2) {$\color{black}\bullet$};
	\node at (-0.9,0.2) {$\color{black}\scriptstyle{-n-r-2}$};
	\end{tikzpicture}  \identdotsuf{i}{n+r} 
	\end{align*}
	Each summand in the above sum rewrites using the bubble slide $3$-cells as follows:
	\begin{align*}
	\begin{tikzpicture}[baseline = 0,scale=1.2]
	\draw[-,thick,black] (0,0.4) to[out=180,in=90] (-.2,0.2);
	\draw[->,thick,black] (0.2,0.2) to[out=90,in=0] (0,.4);
	\draw[-,thick,black] (-.2,0.2) to[out=-90,in=180] (0,0);
	\draw[-,thick,black] (0,0) to[out=0,in=-90] (0.2,0.2);
	\node at (0,-.1) {$\scriptstyle{i}$};
	\node at (-0.2,0.2) {$\color{black}\bullet$};
	\node at (-0.9,0.2) {$\color{black}\scriptstyle{-n-r-2}$};
	\end{tikzpicture}  \identdotsuf{i}{n+r}  & \overset{s_{i,\lambda}^-}{\tfl} \begin{tikzpicture}[baseline = 0,scale=0.8]
	\draw[->,thick,black] (0,-0.4) to (0,0.6);
	\node at (0,-0.6) {$\scriptstyle{i}$};
	\node at (0,0.1) {$\bullet$};
	\node at (-0.7,0.1) {$\scriptstyle{n+r+2}$};
	\end{tikzpicture} \negbubdf{i}{-n-r-2} - 2 \begin{tikzpicture}[baseline = 0,scale=0.8]
	\draw[->,thick,black] (0,-0.4) to (0,0.6);
	\node at (0,-0.6) {$\scriptstyle{i}$};
	\node at (0,0.1) {$\bullet$};
	\node at (-0.7,0.1) {$\scriptstyle{n+r+1}$};
	\end{tikzpicture} \negbubdf{i}{-n-r-1} + \begin{tikzpicture}[baseline = 0,scale=0.8]
	\draw[->,thick,black] (0,-0.4) to (0,0.6);
	\node at (0,-0.6) {$\scriptstyle{1}$};
	\node at (0,0.1) {$\bullet$};
	\node at (-0.6,0.1) {$\scriptstyle{n+r}$};
	\end{tikzpicture} \negbubdf{i}{-n-r}
	\end{align*}
	and we easily check that the above sums are telescopic, so that it remains the $2$-cell
	\[ 
	\sum\limits_{r=0}^{\hil -1} \left[ \identdotsusl{i}{r} \negbubd{i}{-r} - \identdotsufsl{i}{r+1} \negbubdf{i}{-r-1}
	+ \identdotsuffsl{i}{\hil + r +1} \negbubdff{i}{- \hil - r -1} - \identdotsuffsl{i}{\hil + r} \negbubdff{i}{- \hil - r}  \right] \]
	After simplification, it only remains \[ \identu{i} \raisebox{-2mm}{$\begin{tikzpicture}[baseline = 0]
		\draw[->,thick,black] (0,0.4) to[out=180,in=90] (-.2,0.2);
		\draw[-,thick,black] (0.2,0.2) to[out=90,in=0] (0,.4);
		\draw[-,thick,black] (-.2,0.2) to[out=-90,in=180] (0,0);
		\draw[-,thick,black] (0,0) to[out=0,in=-90] (0.2,0.2);
		\node at (0,-.1) {$\scriptstyle{i}$};
		\end{tikzpicture}$} \] and thus the starting diagram reduces to $0$, and this critical branching is confluent modulo $E$.
	\item Now, let us consider the case where $i \ne j$ and $i \cdot j = 0$. Let us at first notice that in that case, we have the following rewriting step given by a bubble slide $3$-cell:
	\[ \raisebox{-2mm}{$\begin{tikzpicture}[baseline = 0, scale = 1.2]
		\draw[-,thick,black] (0,0.4) to[out=180,in=90] (-.2,0.2);
		\draw[->,thick,black] (0.2,0.2) to[out=90,in=0] (0,.4);
		\draw[-,thick,black] (-.2,0.2) to[out=-90,in=180] (0,0);
		\draw[-,thick,black] (0,0) to[out=0,in=-90] (0.2,0.2);
		\node at (0,-.1) {$\scriptstyle{i}$};
		\end{tikzpicture}$} \identu{j} = 
	\begin{tikzpicture}[baseline = 0,scale=1.2]
	\draw[-,thick,black] (0,0.4) to[out=180,in=90] (-.2,0.2);
	\draw[->,thick,black] (0.2,0.2) to[out=90,in=0] (0,.4);
	\draw[-,thick,black] (-.2,0.2) to[out=-90,in=180] (0,0);
	\draw[-,thick,black] (0,0) to[out=0,in=-90] (0.2,0.2);
	\node at (0,-.1) {$\scriptstyle{i}$};
	\node at (-0.2,0.2) {$\color{black}\bullet$};
	\node at (-1.4,0.2) {$\color{black}\scriptstyle{- < h_i, \lambda + j_x > - 1 + \alpha}$}; 
	\end{tikzpicture} \identu{j} \overset{s_{i,\lambda}^-}{\tfl} \identu{j} \raisebox{-2mm}{$\begin{tikzpicture}[baseline = 0, scale=1.2]
		\draw[->,thick,black] (0,0.4) to[out=180,in=90] (-.2,0.2);
		\draw[-,thick,black] (0.2,0.2) to[out=90,in=0] (0,.4);
		\draw[-,thick,black] (-.2,0.2) to[out=-90,in=180] (0,0);
		\draw[-,thick,black] (0,0) to[out=0,in=-90] (0.2,0.2);
		\node at (0,-.1) {$\scriptstyle{i}$};
		\end{tikzpicture}$}  \]
	where $\alpha = < h_i, \lambda + j_x > +1$. Hence, the decreasing confluence of this critical branching is given by the following diagram:
	\[ \xymatrix@R=2.5em@C=3em{
		\isocba{j}{i} \ar@3[r] ^-{\beta_{i,j,\lambda}^+} \ar@3 [d] _-{} & \begin{tikzpicture}[baseline = 0, scale = 1.2]
		\draw[-,thick,black] (0,0.4) to[out=180,in=90] (-.2,0.2);
		\draw[->,thick,black] (0.2,0.2) to[out=90,in=0] (0,.4);
		\draw[-,thick,black] (-.2,0.2) to[out=-90,in=180] (0,0);
		\draw[-,thick,black] (0,0) to[out=0,in=-90] (0.2,0.2);
		\node at (0,-.1) {$\scriptstyle{i}$};
		\end{tikzpicture} \identu{j} \ar@3 [r] ^-{s_{i,\lambda}^-} & \identu{j} \raisebox{-2mm}{$\begin{tikzpicture}[baseline = 0, scale=1.2]
			\draw[->,thick,black] (0,0.4) to[out=180,in=90] (-.2,0.2);
			\draw[-,thick,black] (0.2,0.2) to[out=90,in=0] (0,.4);
			\draw[-,thick,black] (-.2,0.2) to[out=-90,in=180] (0,0);
			\draw[-,thick,black] (0,0) to[out=0,in=-90] (0.2,0.2);
			\node at (0,-.1) {$\scriptstyle{i}$};
			\end{tikzpicture}$} \ar@3 [d] ^-{\fleq} \\
		\isocbb{j}{i} \ar@3 [rr] _-{F_{i,j,\lambda}} & & \identu{j} \raisebox{-2mm}{$\begin{tikzpicture}[baseline = 0, scale=1.2]
			\draw[->,thick,black] (0,0.4) to[out=180,in=90] (-.2,0.2);
			\draw[-,thick,black] (0.2,0.2) to[out=90,in=0] (0,.4);
			\draw[-,thick,black] (-.2,0.2) to[out=-90,in=180] (0,0);
			\draw[-,thick,black] (0,0) to[out=0,in=-90] (0.2,0.2);
			\node at (0,-.1) {$\scriptstyle{i}$};
			\end{tikzpicture}$} } \]
	\item Let us now consider the last case where $i \ne j$ and $i \cdot j = -1$. In that case, we have the following rewriting step in $\ER$:
	\[ 
	\isocba{j}{i} \overset{\beta_{i,j,\lambda}^+}{\tfl} \raisebox{-2mm}{$\begin{tikzpicture}[baseline = 0, scale = 1.2]
		\draw[-,thick,black] (0,0.4) to[out=180,in=90] (-.2,0.2);
		\draw[->,thick,black] (0.2,0.2) to[out=90,in=0] (0,.4);
		\draw[-,thick,black] (-.2,0.2) to[out=-90,in=180] (0,0);
		\draw[-,thick,black] (0,0) to[out=0,in=-90] (0.2,0.2);
		\node at (0,-.1) {$\scriptstyle{i}$};
		\node at (0.2,0.2) {$\bullet$}; 
		\end{tikzpicture}$} \identu{j} + \raisebox{-2mm}{$\begin{tikzpicture}[baseline = 0, scale = 1.2]
		\draw[-,thick,black] (0,0.4) to[out=180,in=90] (-.2,0.2);
		\draw[->,thick,black] (0.2,0.2) to[out=90,in=0] (0,.4);
		\draw[-,thick,black] (-.2,0.2) to[out=-90,in=180] (0,0);
		\draw[-,thick,black] (0,0) to[out=0,in=-90] (0.2,0.2);
		\node at (0,-.1) {$\scriptstyle{i}$};
		\end{tikzpicture}$} \identdotu{j} \]
	Using the bubble slide $3$-cells, the first summand (resp. the second summand) rewrites into 
	\[ \sum\limits_{f = 0}^{\hil + 1} (-1)^f \identdotsusl{j}{f} \negbubd{i}{-f} \quad \left( \text{  resp. $\sum\limits_{f = 0}^{\hil + 1} (-1)^f \identdotsufsl{j}{f+1} \negbubd{i}{-f-1}$ } \right)  \]
	so that the sum is equal to \[ \identu{j} \raisebox{-2mm}{$\begin{tikzpicture}[baseline = 0, scale=1.2]
		\draw[->,thick,black] (0,0.4) to[out=180,in=90] (-.2,0.2);
		\draw[-,thick,black] (0.2,0.2) to[out=90,in=0] (0,.4);
		\draw[-,thick,black] (-.2,0.2) to[out=-90,in=180] (0,0);
		\draw[-,thick,black] (0,0) to[out=0,in=-90] (0.2,0.2);
		\node at (0,-.1) {$\scriptstyle{i}$};
		\end{tikzpicture}$},  \]
	and this critical branching is confluent modulo $E$.
\end{enumerate}

\subsubsection{Critical branchings $(\alpha_{i,\lambda}^{R,+}, (i_1^0 \star_2 i_4^0)^- \star_2 i_3^2 \star_2 i_1^2 \cdot F_{i,j,\lambda})$}
When $i \ne j$ and $i \cdot j = 0$:
\[
\xymatrix@R=2.5em@C=3em{\isocbadot{i}{j} \ar@3 [d] _-{i_1^0 \star_2 i_4^0} \ar@3 [r] ^-{\gamma_{r,+}} & \isocbadotbbslb{i}{j} \ar@3 [r] ^-{\alpha_{i,j}^{R,+}  } &   \raisebox{-2mm}{$\begin{tikzpicture}[baseline = 0, scale=1.2]
  \draw[->,thick,black] (0,0.4) to[out=180,in=90] (-.2,0.2);
  \draw[-,thick,black] (0.2,0.2) to[out=90,in=0] (0,.4);
 \draw[-,thick,black] (-.2,0.2) to[out=-90,in=180] (0,0);
  \draw[-,thick,black] (0,0) to[out=0,in=-90] (0.2,0.2);
  \node at (0.2,0.2) {$\bullet$};
  \node at (0,-0.2) {$\scriptstyle{j}$};
 \end{tikzpicture}$} \identu{i}  \ar@3 [rr] ^-{s^-_{i,j,\lambda, - \la h_i, \lambda + j_x \ra + 2}} & & \identusl{i} \raisebox{-2mm}{$\begin{tikzpicture}[baseline = 0, scale=1.2]
  \draw[->,thick,black] (0,0.4) to[out=180,in=90] (-.2,0.2);
  \draw[-,thick,black] (0.2,0.2) to[out=90,in=0] (0,.4);
 \draw[-,thick,black] (-.2,0.2) to[out=-90,in=180] (0,0);
  \draw[-,thick,black] (0,0) to[out=0,in=-90] (0.2,0.2);
  \node at (0.2,0.2) {$\bullet$};
    \node at (0,-0.2) {$\scriptstyle{j}$};
  \node at (0.4,0.4) {$\scriptstyle{\lambda}$};
 \end{tikzpicture}$} \ar@3 [d] ^-{\fleq} \\ \isocbbdotsl{i}{j} \ar@3 [rrrr] _-{\Fil} & & & & \identusl{i} \raisebox{-2mm}{$\begin{tikzpicture}[baseline = 0, scale=1.2]
  \draw[->,thick,black] (0,0.4) to[out=180,in=90] (-.2,0.2);
  \draw[-,thick,black] (0.2,0.2) to[out=90,in=0] (0,.4);
 \draw[-,thick,black] (-.2,0.2) to[out=-90,in=180] (0,0);
  \draw[-,thick,black] (0,0) to[out=0,in=-90] (0.2,0.2);
  \node at (0.2,0.2) {$\bullet$};
      \node at (0,-0.2) {$\scriptstyle{j}$};
 \end{tikzpicture}$} }
\]
When $i \cdot j = -1$, we have
\[
\xymatrix@R=2.5em@C=3em{\isocbadot{i}{j} \ar@3 [d] _-{i_1^0 \star_2 i_4^0} \ar@3 [r] ^-{\gamma_{r,+}} & \isocbadotbbslb{i}{j} \ar@3 [r] ^-{\alpha_{i,j}^{R,+}  } &   \raisebox{-2mm}{$\begin{tikzpicture}[baseline = 0, scale=1.2]
  \draw[->,thick,black] (0,0.4) to[out=180,in=90] (-.2,0.2);
  \draw[-,thick,black] (0.2,0.2) to[out=90,in=0] (0,.4);
 \draw[-,thick,black] (-.2,0.2) to[out=-90,in=180] (0,0);
  \draw[-,thick,black] (0,0) to[out=0,in=-90] (0.2,0.2);
  \node at (0.2,0.2) {$\bullet$};
  \node at (0,-0.2) {$\scriptstyle{j}$};
 \end{tikzpicture}$} \identdotu{i}  + \raisebox{-2mm}{$\begin{tikzpicture}[baseline = 0, scale=1.2]
  \draw[->,thick,black] (0,0.4) to[out=180,in=90] (-.2,0.2);
  \draw[-,thick,black] (0.2,0.2) to[out=90,in=0] (0,.4);
 \draw[-,thick,black] (-.2,0.2) to[out=-90,in=180] (0,0);
  \draw[-,thick,black] (0,0) to[out=0,in=-90] (0.2,0.2);
  \node at (0.2,0.2) {$\bullet$};
  \node at (0.35,0.2) {$\scriptstyle{2}$};
  \node at (0,-0.2) {$\scriptstyle{j}$};
 \end{tikzpicture}$} \identu{i}  \\ \isocbbdotsl{i}{j} \ar@3 [rr] _-{\Fil} & & \identusl{i} \raisebox{-2mm}{$\begin{tikzpicture}[baseline = 0, scale=1.2]
  \draw[->,thick,black] (0,0.4) to[out=180,in=90] (-.2,0.2);
  \draw[-,thick,black] (0.2,0.2) to[out=90,in=0] (0,.4);
 \draw[-,thick,black] (-.2,0.2) to[out=-90,in=180] (0,0);
  \draw[-,thick,black] (0,0) to[out=0,in=-90] (0.2,0.2);
  \node at (0.2,0.2) {$\bullet$};
      \node at (0,-0.2) {$\scriptstyle{j}$};
 \end{tikzpicture}$} }
\]

Using the bubble slide $3$-cells $s_{i,j,\lambda, \hil + 1}^-$ and $s_{i,j,\lambda, \hil + 12}^-$ respectively, we get that
\[ 
\raisebox{-2mm}{$\begin{tikzpicture}[baseline = 0, scale=1.2]
  \draw[->,thick,black] (0,0.4) to[out=180,in=90] (-.2,0.2);
  \draw[-,thick,black] (0.2,0.2) to[out=90,in=0] (0,.4);
 \draw[-,thick,black] (-.2,0.2) to[out=-90,in=180] (0,0);
  \draw[-,thick,black] (0,0) to[out=0,in=-90] (0.2,0.2);
  \node at (0.2,0.2) {$\bullet$};
  \node at (0,-0.2) {$\scriptstyle{j}$};
 \end{tikzpicture}$} \identdotu{i}  \Rrightarrow \sum\limits_{f=0}^{\hil + 1} (-1)^f \identdotsufsl{j}{f+1} \negbubd{i}{-f} \quad \text{and} \quad
\raisebox{-2mm}{$\begin{tikzpicture}[baseline = 0, scale=1.2]
  \draw[->,thick,black] (0,0.4) to[out=180,in=90] (-.2,0.2);
  \draw[-,thick,black] (0.2,0.2) to[out=90,in=0] (0,.4);
 \draw[-,thick,black] (-.2,0.2) to[out=-90,in=180] (0,0);
  \draw[-,thick,black] (0,0) to[out=0,in=-90] (0.2,0.2);
  \node at (0.2,0.2) {$\bullet$};
  \node at (0.35,0.2) {$\scriptstyle{2}$};
  \node at (0,-0.2) {$\scriptstyle{j}$};
 \end{tikzpicture}$} \identu{i}  \Rrightarrow \sum\limits_{f=0}^{\hil + 2} (-1)^f \identdotsusl{j}{f} \negbubd{i}{1-f}  \] 
and one then proves the confluence of this critical branchings modulo using reindexations of the sums.

\noindent In the case $i=j$, we get the following situation:
\[
\xymatrix@R=2.5em@C=2em{\isocbadotsl{i}{i} \ar@3 [d] _-{i_1^0 \star_2 i_4^0} \ar@3 [r] ^-{\gamma_{r,+}} & \isocbadotbbsl{i}{i} - \raisebox{-4mm}{$\isocbadotbcsl{i}$} \ar@3 [r] ^-{\beta_{i}^+ - (i_1^0)^- \cdot \Cil } &  \sum\limits_{n=0}^{\hil} \ruleciso{i}{n}  \; \equiv_E \; \sum\limits_{n=0}^{\hil} \negbubdfsl{i}{-n-1}  \identdotsu{i}{n} \\ \isocbbdotsl{}{} \ar@3 [r] _-{\Fil}  & - \identusl{} \raisebox{-2mm}{$\begin{tikzpicture}[baseline = 0, scale=1.2]
  \draw[->,thick,black] (0,0.4) to[out=180,in=90] (-.2,0.2);
  \draw[-,thick,black] (0.2,0.2) to[out=90,in=0] (0,.4);
 \draw[-,thick,black] (-.2,0.2) to[out=-90,in=180] (0,0);
  \draw[-,thick,black] (0,0) to[out=0,in=-90] (0.2,0.2);
  \node at (0.2,0.2) {$\bullet$};
 \end{tikzpicture}$} 
 + \sum\limits_{n=0}^{\hil-1} \sum\limits_{r=0}^{\hil - 1} \stdbiso{i}{n+r+1}{-n-r-2} }
\]
Because of the degree conditions on bubbles $3$-cells, the last summand in the last term of the bottom line of this critical branching modulo is equal to $0$ whenever $ n + r > \hil -1$. As a consequence, it reduces to 
\[ \sum\limits_{n+r=0}^{\hil -1} \negbubdfsl{i}{-n-r-2}  \identdotsuf{i}{\; \; \; \; n+r+1} \]
and one then proves the confluence modulo of this branchings using a reindexation of this sum and the bubble slide $3$-cells as in the previous proof of confluence of critical branching.

\subsubsection{Critical branchings $(\gamma_{j,i,j}^{\lambda,+} , (i_1^0 \star_2 i_4^0)^- \cdot F_{i,j,\lambda})$}
\[ \xymatrix@R=2.5em@C=1em{
\raisebox{3mm}{$\isocbacr{i}{j}{i}$} \ar@3 [d] _-{(i_1^0)^- \star_2 (i_4^0)^-}
\ar@3[r] ^-{\beta^+} & \raisebox{3mm}{$\isocbacrb{i}{j}{i}$} + \delta_{i  \cdot j = -1} \raisebox{-2mm}{$\negbubsl{i}$} \identusl{j} \identu{i} \: \equiv_E \: \raisebox{3mm}{$\isocbbcrb{i}{i}{j}$} + \delta_{i \cdot j = -1} \raisebox{-2mm}{$\negbubsl{i}$} \identusl{j} \identu{i} \\
\raisebox{2mm}{$\isocbbcrsl{i}{}$} \ar@3 [r] _-{\Fil}  & \identusl{i} \isocbadotbcsl{j} \ar@3 [r] _-{(i_1^0)^- \cdot \Cil} &  \sum\limits_{n=0}^{\hil} \identusl{i} \: \ruleciso{j}{n} 
}
\]
 Using the $3$-cell $\Cil$, the term in the top line reduces to 
 \begin{equation} 
 \label{E:ReductionCriticalBranchingFYB} \sum\limits_{n=0}^{\hil} \raisebox{3mm}{$\isocbbcrcslc{i}{j}{i}{n}$} \equiv_E  
 \sum\limits_{n=0}^{\hil} \; \raisebox{-13mm}{$\isocbbcrcsld{i}{j}{n}$} \; \overset{\alpha_{i,j}^{L,n,+}}{\Rrightarrow}
\sum\limits_{n=0}^{\hil} \raisebox{-13mm}{$\isocbbcrcsle{i}{j}{n}$} 
\end{equation}
 When $i \cdot j =0$, this rewrites using $\beta_{i,j}^+$ to 
 \[ \sum\limits_{n=0}^{\hil} \raisebox{-13mm}{$\isocbbcrcslf{i}{j}{n}$} \]
 so that this branching is confluent modulo $E$. In the case $i \cdot j = 1$, this rewrites to
 \[ \sum\limits_{n=0}^{\hil} \raisebox{-13mm}{$\isocbbcrcslfb{i}{j}{n}$} + \sum\limits_{n=0}^{\hil} \raisebox{-13mm}{$\isocbbcrcslfc{i}{j}{n}{n+1}$}. \]
 Then note that 
 \[ \sum\limits_{n=0}^{\hil} \raisebox{-13mm}{$\isocbbcrcslfc{i}{j}{n}{n+1}$} + \raisebox{-2mm}{$\negbubsl{i}$} \identusl{j} \identu{i} = \sum\limits_{n=0}^{\hil-1} \raisebox{-13mm}{$\isocbbcrcslfd{i}{j}{n}{n}$} \] so that the top line of this branching rewrites to
 \[ \sum\limits_{n=0}^{\hil} \raisebox{-13mm}{$\isocbbcrcslfb{i}{j}{n}$} + \sum\limits_{n=0}^{\hil-1} \raisebox{-13mm}{$\isocbbcrcslfd{i}{j}{n}{n}$} \]
 and we check the confluence modulo of this branching using the bubble slide $3$-cells, the dots on the leftmost strand being cancelled by the $3$-cells $s_{i,j,\lambda,\alpha}^-$ for $i \cdot j = -1$.

\subsection{Branchings between isomorphism and $\mathfrak{sl_2}$ relations}
\label{A:IsoCriticalBranchings}

\subsubsection{Critical branchings between types $A$ and $C$}
We prove that for any $i \in I$ and $\lambda \in X$, and for any value of $\hil$, the critical branchings $(\Ail,\Cil)$ are confluent modulo $E$.

\begin{enumerate}[{\bf i)}]
	\item For $\hil < 0$,
	\[ \hspace{-1cm} \xymatrix{ \bcritacxy{i} 
		\ar@3[rrrr] ^-{\Cil}
		\ar@3 [d] _-{\fleq} & & & & 0 \ar@3 [d] ^-{\fleq} \\
		\bcritacxy{i} \ar@3 [r] _-{\Ail} & - \sum\limits_{n=0}^{- \hil} \odbbub{i}{n}{-n-1}  \ar@3 [r] _-{\Ceil{b}^{0,n}} & \odbbubf{i}{-\hil}{\hil -1} + \odbbubf{i}{\hil}{- \hil -1} \ar@3 [r] _-{} & \negbubdf{i}{-\hil} + \posbubdf{i}{\hil} \ar@3[r] _-{I_1} & 0 } \]
	
	\item For $\hil = 0$, 
	\[ \hspace{-1cm} 
	\xymatrix@C=4em{ \bcritacxy{i} 
		\ar@3 [r] ^-{\Cil}
		\ar@3[d] _-{\fleq} & \odbbubdup{i}{-1} \ar@3 [r] ^-{\Ceil{c}^{1,- \hil -1}} & \posbubwdot{i} \ar@3[r] ^-{I_1} & - \negbubwdot{i} \ar@3 [d] ^-{\fleq} \\
		\bcritacxy{i} \ar@3 [r] _-{\Ail} & - \odbbubdd{i}{-1}  \ar@3 [rr] _-{\Ceil{b}^{1, \hil -1}} & & - \negbubwdot{i} } \]
	\item For $\hil > 0$, the computation is similar to the case $\hil <0$, except that the source $2$-cell reduces to $0$ by $\Ail$ instead of $\Cil$.
\end{enumerate}

\subsubsection{Critical branchings between types $A$ and $F$}

\begin{enumerate}[{\bf i)}]
	\item For $\hil < 0$,
	\[ \xymatrix@R=2.5em@C=4em{ \bcritaf{i} 
		\ar@3 [rrr] ^-{\Fil}
		\ar@3 [d] _-{\fleq} & & & - \cupl{i} \ar@3[d] ^-{\fleq}\\
		\bcritaf{i} \ar@3 [r] _-{\Ail} & - \sum\limits_{n=0}^{- \hil}  \tfishdlpbub{i}{n} \ar@3 [r] _-{(i_1^2)^- \star_2 i_4^2 \cdot \Ceil{D'}} & - \ruledhil{i}{-\hil -1} \ar@3 [r] _-{\Ceil{c}^1} & - \cupl{i} } \]
	where $\Ceil{D'}$ is a composite of $n$ positive $3$-cells of $(\ER)_3^\ell$, which represents the sum $D'_{i,\lambda,1} + \dots D'_{i,\lambda, - \hil}$, where the $3$-cell $D'_{i,\lambda,k}$ is defined for any $1 \leq k \leq - \hil$ in Appendix \ref{SS:FurtherCells}.
	
	\item For $\hil = 0$, 
	\[ \xymatrix@R=2.5em@C=4em{ \bcritaf{i} 
		\ar@3 [rrrr] ^-{\Fil}
		\ar@3 [d] _-{\fleq} & & & &  - \cupl{i} \ar@3[d] ^-{\fleq}\\
		\bcritaf{i} \ar@3 [r] _-{\Ail} & - \tfishdlpbubzero{i}{-1} \ar@3 [r] _-{\Ceil{b}^1} & - \tfishdl{i} \ar@3 [r] _-{\Dil} & - \ruledzero{i}{-1} \ar@3 [r] _-{\Ceil{c}^1} & - \cupl{i} } \]
	\item For $\hil > 0$,
	\[ \hspace{-1.5cm} \xymatrix@R=2.5em@C=1.5em{
		\bcritaf{i}
		\ar @3 [d] _-{\fleq}
		\ar@3 [r]^-{\Fil} & - \cupl{i} + \sum\limits_{n=0}^{\hil -1} \sum\limits_{r \geq 0} \stdbdown{i}{r}{-n-r-2}{n} \ar@3 [rr]^-{\Ceil{b}^{0,n}} & & - \cupl{i} + \sum\limits_{r \geq 0} \stdbdownf{i}{r}{ - \hil - r -1}{ \hil -1} \ar@3 [d] ^-{\fleq} \\
		\bcritaf{i} \ar@3[r] ^-{\Ail} & 0  & - \cupl{i} + \sum\limits_{r \geq 0} \ruledf{i}{r}{- \hil - r -1} \ar@3 [l] ^-{\Ceil{c}} & - \cupl{i} + \sum\limits_{r \geq 0} \stdbdownf{i}{r}{ - \hil - r -1}{ \hil -1} \ar@3 [l] ^-{\Ceil{b}^1}     } \]
	where the cell $\Ceil{c}$ is defined as the composite of rewriting steps of $\ER$ given by $\Ceil{c}^{1,-\hil-1} + \Ceil{c}^{0,-\hil-2} + \dots $, using degree condition $3$-cells on bubbles to prove that the only term remaining is for $r=0$, and is $\cupl{i}$.
\end{enumerate}

\subsubsection{Critical branchings between types $B$ and $D$}
\begin{enumerate}[{\bf i)}]
	\item For $\hil < 0$,
	\[ \hspace{-1cm} \xymatrix@R=2.5em@C=2em{
		\bcritbd{i} \ar@3 [r] ^-{\Bil}
		\ar@3 [d] _-{\fleq} & - \somme{n=0}{- \hil} \odbbubpos{i}{-n-1}{n} \ar@3 [r] ^-{\Ceil{c}^{0,n}} & \odbbubposf{i}{\hil -1}{- \hil} + \odbbubposf{i}{\hil}{- \hil -1} \ar@3 [r] ^-{\Ceil{b}^1 + \Ceil{c}^1} & \negbubdf{i}{- \hil} + \posbubdf{i}{\hil} \ar@3 [r] ^-{I_1} & 0 \ar@3 [d] ^-{\fleq} \\
		\bcritbd{i} \ar@3 [rrrr] _-{\Dil} & & & & 0} \]
	\item For $\hil = 0$, 
	\[ \xymatrix@R=2.5em@C=2em{
		\bcritbd{i} \ar@3 [r] ^-{\Bil}
		\ar@3 [d] _-{\fleq} & - \odbbubposup{i}{-1} \ar@3 [rr] ^-{\Ceil{b}^1} & &  - \negbub{i}   \ar@3 [d] ^-{\fleq} \\
		\bcritbd{i} \ar@3 [r] _-{\Dil} & \odbbubdup{i}{-1} & \ar@3 [r] _-{\Ceil{c}^1}   & \posbub{i} \ar@3 [r] _-{I_1}  & - \negbub{i} } \]
	\item For $\hil > 0$,
	\[ \hspace{-1cm}
	\xymatrix@R=2.5em@C=2em{
		\bcritbd{i}
		\ar@3 [rrrr] ^-{\Bil} \ar@3 [d] _-{\fleq} & & & & 0 \ar@3 [d] ^-{\fleq} \\
		\bcritbd{i} 
		\ar@3 [r] _-{\Cil} & \somme{n=0}{\hil} \odbbub{i}{-n-1}{n} \ar@3 [r] _-{\Ceil{b}^{0,n}} &  \odbbubf{i}{- \hil -1}{\hil} + \odbbubf{i}{- \hil}{\hil -1} \ar@3 [r] _-{\Ceil{c}^1 + \Ceil{b}^1} & \posbubdf{i}{\hil} + \negbubdf{i}{- \hil} \ar@3 [r] _-{I_1} & 0 } \] 
\end{enumerate}

\subsubsection{Critical branchings between types $B$ and $F$}

\begin{enumerate}[{\bf i)}]
	\item For $\hil < 0$,
	
	\[ \xymatrix@R=2.5em@C=3em{
		\bcritbf{i} \ar@3 [r] ^-{\Bil} \ar @3 [d] _-{\fleq} & - \sum\limits_{n=0}^{- \hil} \tfishurpbub{i}{n} \ar@3 [r] ^-{\Ceil{B'}} & \rulechil{i}{- \hil-1} \ar@3 [r] ^-{\Ceil{b}^1} & - \capr{i} \ar@3[d] ^-{\fleq} \\
		\bcritbf{i} \ar@3 [rrr] _-{\Fil} & & & - \capr{i}   } \]
	where $\Ceil{B'}$ is the positive $3$-cell of $(\ER)_3^\ell$ corresponding to $B'_{i,\lambda,0} + \dots + B'_{i ,\lambda, - \hil}$ where each $3$-cell $B'_{i,\lambda,k}$ for $0 \leq k \leq - \hil$ is defined in Appendix \ref{SS:FurtherCells}. 
	\item For $\hil = 0$,
	\[ \xymatrix@R=2.5em@C=3em{
		\bcritbf{i} \ar@3 [r] ^-{\Bil} \ar @3 [d] _-{\fleq} & - \tfishurpbubup{i}{-1} \ar @3[r] ^-{\Ceil{b}^1} & - \tfishur{i} \ar@3[r] ^-{\Cil} & - \rulechilb{i}{-1} \ar@3 [r] ^-{\Ceil{c}^1} & - \capr{i} \ar @3 [d] _-{\fleq} \\
		\bcritbf{i} \ar@3 [rrrr] _-{\Fil} & & & & - \capr{i} } \]
	\item For $\hil > 0$,
	\[ \hspace{-2cm} \xymatrix@R=2em@C=1.4em{
		\bcritbf{i} \ar@3[r] ^-{\Bil}
		\ar @3 [d] _-{\fleq} & 0 & - \capr{i} + \sum\limits_{n=0}^{\hil -1} \rulecf{i}{n}{-n-\hil-1} \ar@3 [l] _-{\Ceil{b}} & \sum\limits_{n=0}^{\hil -1} \sum\limits_{r=0}^{\hil -1} \stdbup{i}{r}{-n-r-2}{n}  \ar @3[d] ^-{\fleq} \ar@3 [l] _-{\Ceil{b}^{0,r}}  \\
		\bcritbf{i} \ar @3[r] _-{\Fil} & - \capr{i} + \sum\limits_{n=0}^{\hil-1} \sum\limits_{r \geq 0} \stdbup{i}{r}{-n-r-2}{n} \ar@3 [rr] _-{\Ceil{b'}} & &  \sum\limits_{n=0}^{\hil -1} \sum\limits_{r=0}^{\hil -1} \stdbup{i}{r}{-n-r-2}{n}   } \]
	where $\Ceil{b}$ is the $3$-cell of $(\ER)_3^\ell$ reducing each bubble by $\Ceil{b}^{0,-n-\hil -1}$ into $0$ when $ n \ne 0$ and by $\Ceil{b}^{1}$ into $1_{1_{\lambda}}$ when $n=0$.
\end{enumerate}

\subsubsection{Critical branchings between types $E$ and $D$}
\begin{enumerate}[{\bf i)}]
	\item For $\hil <0$, 
	\[ \hspace{-2.8cm} \xymatrix@R=2em@C=0.5em{
		\bcritde{i} \ar @3 [r] ^-{\Eil} \ar@3 [d] _-{\fleq} & - \cupr{i} + \sum\limits_{n=0}^{- \hil -1} \sum\limits_{r \geq 0} \stdadown{i}{n}{-n-r-2}{r} \ar @3 [rr] ^-{\Ceil{b'}} & & - \cupr{i} +  \sum\limits_{n=0}^{- \hil -1} \sum\limits_{r = 0}^{- \hil -1} \stdadown{i}{n}{-n-r-2}{r} \ar@3 [d] ^-{\fleq} \\
		\bcritde{i} \ar @3 [r] _-{\Dil} & 0 & - \cupr{i} + \sum\limits_{n=0}^{-\hil -1} \ruleaf{i}{n}{-\hil -n-1} \ar @3 [l] ^-{\Ceil{b}} & - \cupr{i} +  \sum\limits_{n=0}^{- \hil -1} \sum\limits_{r = 0}^{- \hil -1} \stdadown{i}{n}{-n-r-2}{r} \ar @3 [l] ^-{\Ceil{c}^{0,n}} } \]
	where $\Ceil{b}$ is the $3$-cell of $(\ER)_3^\ell$ reducing each bubble by $\Ceil{b}^{0,-n- \hil -1}$ into $0$ when $ n \ne 0$ and by $\Ceil{b}^{1}$ into $1_{1_{\lambda}}$ when $n=0$.
	\item For $\hil=0$,
	\[ \raisebox{+4mm}{$\xymatrix@R=2.1em@C=2em{
		\bcritde{i} \ar@3 [rrrr] ^-{\Eil}
		\ar@3 [d] _-{\fleq} & & & & - \cupr{i} \ar@3 [d] ^-{\fleq} \\
		\bcritde{i} \ar@3 [r] _-{\Dil} & \tfishdrpbubhil{i}{-1} \ar@3 [r] _-{\Ceil{c}^1} & \tfishdr{i} \ar@3 [r] _-{\Ail} & - \ruleahilb{i}{\hspace{-0.3cm} -1} \ar@3 [r] _-{\Ceil{b}^1} & - \cupr{i} }$} \]
	\item For $\hil > 0$,
\[
	\raisebox{-4mm}{$\xymatrix@R=2.1em@C=2em{
		\bcritde{i}
		\ar@3 [rrr] ^-{\Eil}
		\ar@3 [d] _-{\fleq}  & & & - \cupr{i} \ar@3 [d]^-{\fleq}  \\
		\bcritde{i} 
		\ar@3 [r] _-{\Dil} & \somme{n=0}{\hil} \tfishdrpbub{i}{n} \ar@3 [r] _-{\Ceil{A'}} & - \ruleahilb{i}{\hil} \ar@3 [r] _-{\Ceil{c}^1} &    - \cupr{i} 
	}$} \]
	where the $3$-cell $\Ceil{A'}$ is defined as the $3$-cell  $ A'_{i,\lambda ,0} + \dots + A'_{i,\lambda,\hil}$, where each $3$-cell $A'_{i,\lambda,k}$ for $0 \leq k \leq \hil$ is defined in Appendix \ref{SS:FurtherCells} and has for $2$-target $0$ if $n < \hil$ and $- \cupr{i}$ if $n = \hil$.
\end{enumerate}

\subsubsection{Critical branchings between types $C$ and $E$}

\begin{enumerate}[{\bf i)}]
	\item For $\hil < 0$,
	\vskip-10pt
	\[ \hspace{-1.5cm}
	\xymatrix@R=2.1em@C=1.5em{
		\bcritce{i} \ar@3 [r] ^-{\Cil} 
		\ar @3 [d] _-{\fleq} & 0 &  -\capl{i} + \rulebhil{i}{\hil -1} \ar@3 [l] _-{\Ceil{b}^1} &   - \capl{i} + \somme{r \geq 0}{} \rulebf{i}{r}{\hil - r -1} \ar@3 [l] _-{\Ceil{b}} 
		\ar@3 [d] ^-{\fleq}                      \\
		\bcritce{i} \ar@3[r] _-{\Eil} & - \capl{i} + \somme{n=0}{- \hil -1} \somme{r \geq 0}{} \stdaup{i}{n}{-n-r-2}{r} \ar@3 [r] _-{\Ceil{c}^{0,n}} & - \capl{i} + \somme{r \geq 0}{} \stdaupf{i}{- \hil-1}{\hil - r  -1}{r} \ar@3 [r] _-{\Ceil{c}^1} & - \capl{i} + \somme{r \geq 0}{} \rulebf{i}{r}{\hil - r -1}  
	} \]

\vskip-20pt	

	\item For $\hil = 0$,
	\vskip-10pt
	\[
	\xymatrix@R=2.1em@C=2.5em{
		\bcritce{i} \ar@3 [r] ^-{\Cil} 
		\ar @3 [d] _-{\fleq} & \tfishulpbubup{i}{-1} 
		\ar @3 [r] ^-{\Ceil{c}^1} & \tfishul{i} \ar@3 [r] ^-{\Bil} & - \rulebhil{i}{\hspace{-2em} -1} \ar@3 [r] ^-{\Ceil{b}^1} & - \capl{i} \ar@3 [d] ^-{\fleq} \\
		\bcritce{i} \ar@3 [rrrr] _-{\Eil} & & & &  - \capl{i} } \]  
	\vskip-20pt
	\item For $\hil > 0$,
	\[ 
	\xymatrix@R=2.1em@C=1.5em{
		\bcritce{i} 
		\ar@3[r] ^-{\Cil}
		\ar@3 [d] _-{\fleq} & \somme{n=0}{\hil} \tfishulpbub{i}{n} \ar@3 [r] ^-{\Ceil{B'}} & \rulebhil{i}{\hil -1} \ar@3 [r] ^-{\Ceil{b}^1} & - \capl{i} \ar@3 [d] ^-{\fleq} \\
		\bcritce{i} 
		\ar@3 [rrr] _-{\Eil} & & & - \capl{i} } \]
		\vskip-3pt
	where the $3$-cell $\Ceil{B'}$ is defined as the $3$-cell  $ B'_{i,\lambda ,0} + \dots + B'_{i,\lambda,\hil}$, where each $3$-cell $B'_{i,\lambda,k}$ for $0 \leq k \leq \hil$ is defined in \ref{SS:FurtherCells}, and has for $2$-target $0$ if $n < \hil$ and $- \capl{i}$ if $n = \hil$.
\end{enumerate}
\vskip-30pt

\subsubsection{Critical branchings between types $E$ and $F$}
For any $i$ in $I$ and $\lambda$ in $X$, there are two types of critical branchings implying $3$-cells $\Eil$ and $\Fil$, depending on if the source $2$-cell of $\Eil$ is vertically composed below or above the source $2$-cell of $\Fil$. Following Section \ref{SSS:ClassificationOfBranchingsAF}, we denote by $(\Eil,\Fil)$ (resp. $(\Fil,\Eil)$) these two families of critical branchings. We will prove that for any $i$ and $\lambda$, the critical branchings $(\Eil,\Fil)$ are confluent modulo $E$, the other family of branchings would be proved confluent modulo $E$ similarly.

\begin{enumerate}[{\bf i)}]
	\item For $\hil < 0$,
	\[ \xymatrix@R=2.1em@C=2.5em{
		\bcritfe{i}{i} \ar@3 [rr] ^-{\Fil}
		\ar@3 [d] _-{\fleq} & & - \tleftcross{i}{i} \ar@3 [d] ^-{\fleq} \\
		\bcritfe{i}{i} \ar@3 [r] _-{\Eil} & - \tleftcross{i}{i} + \somme{n=0}{-\hil -1} \somme{r \geq 0}{} \tfishdlpbubfcapl{i}{n}{-n-r-2}{r} \ar@3 [r] _-{\Ceil{D'}} & - \tleftcross{i}{i} } \]
	where $\Ceil{D'}$ is the $3$-cell of $(\ER)_3^\ell$ defined as the composite of $3$-cells $D'_{i,\lambda,0} + \dots + D'_{i,\lambda, - \hil -1}$, where these cells are defined for $0 \leq k \leq - \hil -1$ in Appendix \ref{SS:FurtherCells}, and have all $0$ as $2$-target.
	\item For $\hil = 0$,
	\[ \xymatrix@R=2em@C=3em{
		\raisebox{7mm}{$\bcritfe{i}{i}$} \ar @3 [r] ^-{\Fil}
		\ar@3 [d] _-{\fleq} & - \tleftcross{i}{i} \ar@3 [d] ^-{\fleq} \\
		\raisebox{7mm}{$\bcritfe{i}{i}$} \ar @3 [r] _-{\Eil} & - \tleftcross{i}{i} } \]
	\item For $\hil > 0$,
	\[ \xymatrix@R=2.1em@C=2.5em{
		\bcritfe{i}{i} \ar@3 [r] ^-{\Fil}
		\ar@3 [d] _-{\fleq} & - \tleftcross{i}{i} + \somme{n=0}{\hil -1} \somme{r \geq 0}{} \tfishulpbubfcupr{i}{r}{-n-r-2}{n} \ar@3 [r] ^-{\Ceil{B'}} & - \tleftcross{i}{i} \ar@3 [d] ^-{\fleq} \\
		\bcritfe{i}{i} \ar@3 [rr] _-{\Eil} & & - \tleftcross{i}{i} } \]
	where $\Ceil{B'}$ is the $3$-cell of $(\ER)_3^\ell$ defined as the composite of $3$-cells $B'_{i,\lambda,0} + \dots + B'_{i,\lambda, \hil -1}$, where these cells are defined for $0 \leq k \leq \hil -1$ in Appendix \ref{SS:FurtherCells}, and have all $0$ as $2$-target.
\end{enumerate}

\end{document}